\documentclass[a4paper,reqno]{amsart}

\usepackage[margin=3cm]{geometry}
\usepackage{amsmath,amssymb,amsthm,mathrsfs,mathtools}
\usepackage{graphicx}
\usepackage{booktabs}
\usepackage{array}
\usepackage{enumitem}
\usepackage{xcolor}
\usepackage{placeins}
\usepackage{float}
\usepackage[colorlinks=true]{hyperref}

\hypersetup{
  colorlinks=true,
  linkcolor=violet,
  citecolor=cyan!60!black,
  urlcolor=cyan!60!black
}

\newtheorem{theorem}{Theorem}[section]
\newtheorem{lemma}[theorem]{Lemma}

\newtheorem{corollary}[theorem]{Corollary}
\newtheorem{proposition}[theorem]{Proposition}
\theoremstyle{remark}
\newtheorem{remark}[theorem]{Remark}
\theoremstyle{definition}

\numberwithin{equation}{section}
\allowdisplaybreaks

\newcommand{\T}{\mathbb T^2}
\newcommand{\R}{\mathbb R}
\newcommand{\Pa}{\mathbb P}
\newcommand{\dt}{h}
\newcommand{\B}[1]{B^{#1}}
\newcommand{\norm}[2]{\left\|#1\right\|_{#2}}
\newcommand{\Div}{\operatorname{div}}
\newcommand{\curl}{\operatorname{curl}}

\title[Good-vorticity integrator for Oldroyd--B]
{A Good-Vorticity Low-Regularity Integrator for the Inviscid Oldroyd--B System with Stress Diffusion}

\author[X. Cheng]{Xinyu Cheng}
 \address{X. Cheng, Research Institute of Intelligent Complex Systems, Fudan University, Shanghai, P.R. China}
\email{xycheng@fudan.edu.cn}

\author[Z. Luo]{Zhaonan Luo}
\address{Z. Luo, School of Science, Shenzhen Campus of Sun Yat-sen University, Shenzhen, P.R. China}
\email{luozhn7@mail.sysu.edu.cn}

\author[Z. Yang]{Zhaojie Yang}
\address{Z. Yang, School of Mathematics, Georgia Institute of Technology, Atlanta, United States}
\email{zyang768@gatech.edu}

\author[C. Yuan]{Cheng Yuan}
\address{C. Yuan, School of Mathematical Sciences, Fudan University, Shanghai, P.R. China}
\email{cyuan22@m.fudan.edu.cn}

\date{}

\subjclass[2020]{35Q35, 65M12, 65M70, 76A10}
\keywords{inviscid Oldroyd--B system, good vorticity, low-regularity
integrator, critical Besov spaces, Fourier pseudospectral method}

\begin{document}
\begin{abstract}
We establish sharp convergence results for a sequential semi-implicit
good-vorticity integrator for the two-dimensional inviscid Oldroyd--B
system with stress diffusion on the torus. For exact solutions satisfying
$u\in C_tB^{1+\alpha}_{\infty,1}$ and
$\tau\in C_tB^\alpha_{\infty,1}\cap L^1_tB^{2+\alpha}_{\infty,1}$,
$0\le\alpha\le2$, we prove convergence in the critical norms
$B^1_{\infty,1}\times B^0_{\infty,1}$, uniformly over the time nodes
on every finite interval where these regularity assumptions hold,
including the critical endpoint $\alpha=0$.
For $0<\alpha\le2$, we obtain the rate $O(h^{\alpha/2})$ without
any additional time H\"older assumption, reaching first order at
$\alpha=2$.
These algebraic exponents are optimal for this Eulerian scheme over
the stated solution classes, as shown by fixed-data lower bounds
in the case $a=b=0$ with nonzero velocity mean.
For a fixed shear solution at critical regularity, we establish matching
logarithmic upper and lower bounds
on the numerical error, ruling out every positive algebraic convergence
rate. We also prove that the gain of one stress derivative at the time
nodes is optimal when the velocity is controlled in $B^1_{\infty,1}$,
even for smooth initial stress.
\end{abstract}

\maketitle

\section{Introduction}

We consider the two-dimensional inviscid Oldroyd--B system with stress
diffusion
\begin{equation}
  \partial_t u+u\cdot\nabla u+\nabla p=\Div\tau,\qquad
  \Div u=0,
  \label{eq:u}
\end{equation}
\begin{equation}
  \partial_t\tau+u\cdot\nabla\tau+a\tau+Q(\nabla u,\tau)
  =
  D(u)+\Delta\tau ,
  \label{eq:tau}
\end{equation}
posed on $\T$. Here $u(t,x)\in\R^2$ is the velocity,
$\tau(t,x)\in\R^{2\times2}$ is the symmetric polymer stress,
$a\ge0$ is a damping parameter, and
\[
  D(u)=\frac{\nabla u+\nabla u^T}{2},\qquad
  \Omega(u)=\frac{\nabla u-\nabla u^T}{2}.
\]
The objective Oldroyd term is
\begin{equation}
  Q(\nabla u,\tau)
  =
  \tau\Omega(u)-\Omega(u)\tau
  -b\bigl(D(u)\tau+\tau D(u)\bigr),\qquad b\in[-1,1].
  \label{eq:Q}
\end{equation}
Throughout, the initial velocity is divergence free and the initial stress
satisfies \(\tau_0=\tau_0^T\). Both the equations and the numerical scheme
preserve the symmetry of the stress tensor. We do not require the stress
tensor to be positive semidefinite or to have zero trace.
The constitutive law goes back to Oldroyd \cite{O50,O58}.

Viscoelastic flow models also arise in biology, for example in ciliary
transport of airway mucus and sperm motion through cervical mucus.
Lauga \cite{L07} studied how fluid relaxation affects propulsion and
transport using an idealized waving sheet in an Oldroyd--B fluid.
Simulations of a finite-length swimmer by Teran, Fauci, and Shelley
\cite{TFS10} showed that viscoelastic stresses can increase swimming
speed and efficiency in certain regimes. These studies assume negligible
inertia and retain solvent viscosity. The system considered here retains
inertia and includes stress diffusion, with no solvent-viscosity term
in the momentum equation.

The continuous theory of stress-diffusive models examines how stress
dissipation controls the coupled evolution. Results include global
regularity \cite{ER15}, analysis of high-Reynolds and high-Weissenberg
regimes \cite{CWZZ20}, and decay under fractional diffusion
\cite{WWXZ22}.

At the discrete level, stability of the constitutive evolution has
motivated free-energy-dissipative schemes \cite{BLM09},
approximation of regularized models \cite{BB11}, and formulations based
on deformation tensors or characteristics \cite{PWW17,LNS16}.
Log-conformation methods address stress representation at high
Weissenberg numbers \cite{FK05,HGT26}, while trajectory-based
conformation updates preserve positive definiteness in a coupled
Stokes solver \cite{AP25}.

For viscous Oldroyd--B systems, error estimates are available for SUPG
finite elements \cite{EM03} and decoupled BDF2 \cite{R20}.
Temporal decoupling has also been developed for flows with zero solvent
viscosity \cite{DH10,DHM12}, while consistent splitting methods
separate the computation of the coupled fields \cite{PC23}.
Our question is whether a sequential stress--vorticity update converges
at critical Besov regularity, and which algebraic exponents are optimal
when additional spatial regularity is available.

The numerical difficulty is the interaction of a nondiffusive velocity
with a diffusive stress at their critical regularities. Good vorticity
cancels \(\curl\Div\tau\) in the vorticity equation, but its source
still contains \(Q(\nabla u,\tau)\). Both factors are only in
\(B^0_{\infty,1}\), where the product has no uniform algebra bound.
We therefore update stress first, using the old stress in its own source,
and then evaluate the constitutive term in the good-vorticity source with
\(\tau^{n+1}\). The stress solve supplies one additional derivative
of Besov regularity at each positive time node, so a mixed product
estimate controls
\(Q(\nabla u^n,\tau^{n+1})\) in \(B^0_{\infty,1}\).
Proposition~\ref{prop:critical-source-obstruction} makes the obstruction
explicit: uniformly bounded smooth critical inputs can have an
old-stress constitutive source of size \(\log K\), while the new-stress
source tends to zero at fixed step size as \(K\to\infty\).
This obstruction concerns a uniform source estimate; it does not
assert nonconvergence of the frozen-source method.

A further difficulty is that the final transport solve prevents the
stress iterates from retaining full parabolic smoothing. Discrete stress
smoothing and propagation of fixed-data frequency tails yield critical
convergence. With additional Besov regularity, coupled stability in
\(B^0_{\infty,1}\times B^{-1}_{\infty,1}\) gives algebraic error
rates. Fixed advected shears
establish optimality of the algebraic exponents over the stated parameter
and mean class, and exclude every positive algebraic rate for one fixed
critical datum.

Periodic pseudospectral methods have been used to study Oldroyd--B
dynamics \cite{TS07}. We use a dealiased Fourier implementation
\cite{CHQZ88} in Section~\ref{sec:numerics} to test temporal
convergence on smooth nonlinear data, check grid sensitivity, isolate
the constitutive source, and evaluate the logarithmic error of an
infinite-mode shear. These experiments examine the implementation and
the mechanisms in the analysis; the convergence theorems concern the
spatially untruncated scheme.

\textbf{Notation.} We identify \(\T=(\mathbb R/2\pi\mathbb Z)^2\).  For a scalar \(f\), a
vector \(v=(v_1,v_2)\), and a matrix field \(A=(A_{ij})\),
\begin{equation*}
\begin{split}
 &\nabla^\perp f=(-\partial_2f,\partial_1f),\qquad
 \curl v=\partial_1v_2-\partial_2v_1,\qquad \Div v=\partial_1v_1+\partial_2v_2,\\
 &(\nabla v)_{ij}=\partial_jv_i,\qquad (\Div A)_i=\sum_{j=1}^2\partial_jA_{ij}.
\end{split}
\end{equation*}
For a scalar- or matrix-valued field \(f\), we use
\(\Div(v\otimes f)\) as shorthand for the componentwise transport flux:
\[
 \Div(v\otimes f):=\sum_{j=1}^2\partial_j(v_jf),
\]
\(I\) denotes the identity operator, and \(\Pa\) is the periodic Leray
projector \(\Pa=I+\nabla(-\Delta)^{-1}\Div\), with \((-\Delta)^{-1}\)
set to zero on constants.  The notation
\([A,B]=AB-BA\) denotes the operator commutator.  With
\(|\T|=(2\pi)^2\), we normalize Fourier coefficients and spatial averages by
\[
 \widehat f(k)=(2\pi)^{-2}\int_{\T}f(x)e^{-ik\cdot x}\,dx,
 \qquad
 \langle f\rangle:=\widehat f(0)
 =(2\pi)^{-2}\int_{\T}f(x)\,dx .
\]
We write \(m_0=\langle u_0\rangle\in\mathbb R^2\) for the velocity mean.

For \(r\in\mathbb R\), we use the Sobolev norm
\begin{equation}
 \|f\|_{H^r(\mathbb{T}^{2})}^2=\sum_{k\in\mathbb Z^2}(1+|k|^2)^r|\widehat f(k)|^2.
 \label{eq:normalized-Sobolev-norm}
\end{equation}
Thus \(H^0=L^2\) uses the normalized measure \(|\T|^{-1}dx\).
The coefficient norm is Euclidean for vectors and Frobenius for tensors.
The discrete Fourier norms in the numerical experiments use the same normalization.

All Besov spaces are periodic and inhomogeneous; we write
\(B^s=B^s_{\infty,1}(\T)\).  We use the standard Littlewood--Paley and
Bony notation of \cite[Chapter~2]{BCD11}, with blocks indexed by
\(j\ge-1\) and blocks below \(-1\) set to zero.  Vector and tensor norms use any fixed
equivalent finite-dimensional norm.  The smooth cutoff used in the
numerical norm evaluation is specified in Section~\ref{sec:numerics}.

We write \(L_T^pX=L^p(0,T;X)\) and \(C_tX=C([0,T];X)\).
The notation \(A\lesssim B\) means
\(A\le CB\), with \(C\) independent of \(h\) and, unless explicitly stated,
of \(N\); \(C_T\) may also depend on the fixed solution bounds, \(a\), and
\(b\).  We use \(|k|\) for the Euclidean frequency norm,
\(|k|_\infty=\max_i|k_i|\), and \(\mathbf 1_E\) for the indicator of a set
\(E\).

\subsection{Low-regularity integrators and numerical Besov analysis}

Low-regularity integrators retain the dominant evolution within a
Duhamel step to reduce the regularity required in estimates of the
remainder. This principle has a general formulation \cite{RS21}
and has been developed for KdV \cite{WZ22,CLWY25}, rough wave solutions
\cite{CLLY25}, and Cahn--Hilliard \cite{C23}. For fluid flow,
Li--Ma--Schratz \cite{LMS22} combine exponential evolution and
semi-implicit transport to obtain viscosity-independent first-order
\(L^2\) estimates for Navier--Stokes under spatial Sobolev assumptions.

Besov spaces allow error estimates to track fractional spatial
regularity. Cheng--Luo--Wang \cite{CLWNS26} obtain viscosity-uniform
error estimates in Sobolev and Besov norms for a semi-implicit Fourier
method for Navier--Stokes and Euler flow under additional spatial
regularity. The SQG analysis
\cite{CLWSQG26} gives first-order \(B^0\) error bounds for
an exact active scalar bounded in \(B^2\), under the time-continuity
assumption of that analysis. The connection between Besov smoothness
and approximation rates also appears in Gaussian kernel methods on
manifolds: Lei, Shi, and Yuan \cite{LSY26} establish \(L^2\)
approximation rates governed by Besov regularity, with applications
to logistic classification.

For the present coupled system, the good-vorticity variable and the
operators used in velocity recovery are
\[
  \Gamma=\curl u-\mathcal R\tau,
  \qquad \mathcal R=\Delta^{-1}\curl\Div,
  \qquad \mathcal Kf=\nabla^\perp\Delta^{-1}(f-\langle f\rangle).
\]
The Navier--Stokes and SQG schemes in \cite{LMS22,CLWSQG26} use
the semigroup-smoothed old velocity as the advector. In the present
scheme, both transport solves use the recovered old velocity
\(u^n=m_0+\mathcal K(\Gamma^n+\mathcal R\tau^n)\), while the
heat factor acts on stress before the constitutive good-vorticity
source is evaluated. Section~\ref{sec:discrete-scheme} specifies
this sequential update.

To analyze this splitting, Lemma~\ref{ep:product-smoothing} estimates
the repeated heat--resolvent evolution for any divergence-free velocity
sequence uniformly bounded in \(B^s\), \(1\le s\le2\), without any
assumption on its time regularity. Applied to the stress recurrence,
it gives one additional derivative at positive time nodes, both at
critical regularity and under additional Besov regularity.
A heat-recurrence estimate also bounds the time sum of
\(\|E_h\tau^n\|_{B^2}\), weighted by \(h\).
When the velocity is controlled only in \(B^1\), the gain of one
derivative at the nodes is optimal, even for smooth initial stress;
Appendix~\ref{sec:smoothing-sharpness} shows why the final transport
solve prevents any greater gain in general.

\subsection{Main convergence results}
\label{sec:main-result}

We first prove critical convergence without a prescribed rate, then
give algebraic rates under additional Besov regularity. Once the stress
is bounded in \(B^\alpha\) and the velocity mean is fixed, a
\(B^\alpha\) bound on \(\Gamma\) is equivalent to a \(B^{1+\alpha}\)
bound on \(u\), by the velocity recovery formula.

\begin{theorem}[Convergence at critical regularity]
\label{thm:critical-convergence}
Let \(T>0\), \(a\ge0\), \(b\in[-1,1]\), and suppose that an exact
solution on \([0,T]\) satisfies
\begin{equation}
 u\in C([0,T];B^1),\qquad
 \tau\in C([0,T];B^0)\cap L^1(0,T;B^2).
 \label{ep:exact}
\end{equation}
Initialize the sequential scheme
\eqref{eq:recover-discrete}--\eqref{eq:scheme-gamma} with
\(\tau^0=\tau_0\), \(\Gamma^0=\curl u_0-\mathcal R\tau_0\), and the
fixed velocity mean \(m_0=\langle u_0\rangle\).
For all sufficiently small \(h=T/M\), its unique critical orbit exists
through \(T\), is bounded uniformly in \(B^1\times B^0\times B^0\), and
\begin{equation}
 \lim_{\substack{M\to\infty\\h=T/M}}
 \max_{0\le n\le M}
 \bigl(\|u(t_n)-u^n\|_{B^1}
       +\|\tau(t_n)-\tau^n\|_{B^0}
       +\|\Gamma(t_n)-\Gamma^n\|_{B^0}\bigr)=0.
 \label{ep:main}
\end{equation}
No algebraic rate is asserted. The step threshold may depend on the
fixed data and exact trajectory, including their frequency tails.
\end{theorem}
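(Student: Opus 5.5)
The proof will run as a compactness/density argument. We first establish quantitative stability of the scheme at critical regularity, and then exploit that a critical-class exact solution can be approximated by smoother ones for which algebraic rates hold.

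\textbf{Step 1: uniform bounds on the critical orbit.} Write the sequential scheme as the stress update $\tau^{n+1}=E_h(\text{transport resolvent})(\tau^n+h\,\text{source}(u^n,\tau^n))$, followed by the good-vorticity update with source $Q(\nabla u^n,\tau^{n+1})$, and recover the velocity through $u^n=m_0+\mathcal K(\Gamma^n+\mathcal R\tau^n)$. Apply Lemma~\ref{ep:product-smoothing} with $s=1$. As long as $\|u^k\|_{B^1}\le 2C_0$ for $k\le n$, it gives $\tau^{n}\in B^0$ uniformly, a positive-time gain $\tau^{n+1}\in B^1$ with a bound of order $(t_{n+1})^{-1/2}$ up to logarithms, and the discrete $L^1$-in-time control $\sum h\|E_h\tau^n\|_{B^2}$. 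The mixed product estimate $\|Q(\nabla u,\tau)\|_{B^0}\lesssim\|u\|_{B^1}\|\tau\|_{B^1}$ then controls the good-vorticity source. Since $B^0$ transport by a $B^1$ velocity has only logarithmic loss and we accept a bound depending on $T$, a discrete Gronwall argument closes a bootstrap for small $h$. The smallness threshold must be tied to the frequency tail: the $B^0$ transport resolvent bound involves $\exp(C\sum h\|u^k\|_{B^1})$, and the smallness of $\sum h\|E_h\tau^n\|_{B^2}$ over short intervals must be uniform. This yields existence, uniqueness and the uniform bound in $B^1\times B^0\times B^0$ stated in Theorem~\ref{thm:critical-convergence}.

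\textbf{Step 2: approximation.} Mollify the data by frequency truncation, $\tau_0^\varepsilon=S_{N}\tau_0$ and $u_0^\varepsilon=S_Nu_0$, keeping the mean $m_0$. By continuous well-posedness and stability in the critical class, the resulting exact solutions satisfy $u^\varepsilon\in C_tB^{1+\alpha}$ and $\tau^\varepsilon\in C_tB^\alpha\cap L^1_tB^{2+\alpha}$ for some small $\alpha>0$, on the same $[0,T]$ for large $N$, and converge to $(u,\tau)$ in $C_tB^1\times C_tB^0$. The convergence holds because $\{u(t)\}$ and $\{\tau(t)\}$ form compact sets in $B^1$ and $B^0$ respectively, so their high-frequency tails are uniformly small. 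For the smoothed data, the error analysis of the rate theorem (the coupled $B^0\times B^{-1}$ stability estimate) gives $\max_n\|\text{error}^\varepsilon\|\le C_\varepsilon h^{\alpha/2}$.

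\textbf{Step 3: discrete stability between the two numerical orbits.} This is the main obstacle. We need a bound $\max_n\|(u^n-u^{n,\varepsilon},\tau^n-\tau^{n,\varepsilon},\Gamma^n-\Gamma^{n,\varepsilon})\|_{B^1\times B^0\times B^0}\le \omega(\varepsilon)+o_h(1)$, uniform in $h$. At critical regularity a straightforward difference estimate loses a derivative in the transport term $(u^n-u^{n,\varepsilon})\cdot\nabla\tau$. To handle this, I would first estimate the difference in the weaker pair $B^0\times B^{-1}$, where the loss is absorbed by the extra regularity of the $\varepsilon$-orbit. I would then interpolate with the uniform critical bounds, using the uniform smallness of the high-frequency tail of the critical orbit. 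That tail bound is the propagation of fixed-data frequency tails mentioned in the introduction: $\|(I-S_J)\tau^n\|_{B^0}$ and $\|(I-S_J)\Gamma^n\|_{B^0}$ must be small uniformly in $n$ and $h$ once $J$ is large, obtained by commuting the Littlewood--Paley projection through the transport and heat resolvents. Carrying the same tail estimate for the exact solution then gives the needed modulus $\omega(\varepsilon)$.

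Combining the three steps via the triangle inequality, letting $h\to0$ first and then $\varepsilon\to0$, yields \eqref{ep:main}.
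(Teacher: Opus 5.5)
Your route (approximate by smoother solutions, use the algebraic rate for them, then compare the two numerical orbits) differs from the paper's compactness--uniqueness argument. As written, however, it has genuine gaps.

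\textbf{Step 1 cannot give a bound on all of $[0,T]$.} The critical recurrence is superlinear. The good-vorticity source contains $\mathcal RQ(\nabla u^n,\tau^{n+1})$ and $[\mathcal R,u^n\cdot\nabla]\tau^n$, and the smoothing constants of Lemma~\ref{ep:product-smoothing} themselves depend on the assumed bound $R$ for $\|u^n\|_{B^1}$. A bootstrap under $\|u^k\|_{B^1}\le 2C_0$ therefore closes only on an interval whose length depends on the data, as in Proposition~\ref{prop:discrete-high}. Nothing in Step 1 uses that the exact solution exists on $[0,T]$, so iterating short intervals does not reach $T$. Note also that $B^0$ transport by a $B^1$ velocity has no loss at all (Lemma~\ref{lem:transport-stability}); the obstruction is the quadratic source. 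In the paper the uniform bound is an output of convergence, not an input. All estimates are conditional on a prefix with $X_n\le R$, and convergence is proved uniformly over such prefixes (Proposition~\ref{ep:conditional-convergence}). A first-exit argument using the overshoot bound of Lemma~\ref{ep:one-step} then keeps $X_n\le L+1$ up to $T$. Your Step 3 uses the uniform critical bound as an input, so the argument must at least be recast in this continuation form.

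\textbf{Step 3, which carries the proof, is not closed, and Step 2 rests on results that are not available.} You propose to absorb the derivative loss in $B^0\times B^{-1}$ through the extra regularity of the $\varepsilon$-orbit. Then the Gronwall constant depends on that orbit's $B^{1+\alpha}$ norms; in Lemma~\ref{lem:coupled-mixed-continuous-stability} the coefficient contains $M^2$. For truncation at frequency $2^N$ these norms are of size $2^{\alpha N}$, unbounded unless $u_0\in B^{1+\alpha}$, so the constant is of order $\exp(CT4^{\alpha N})$. The initial weak discrepancy, by contrast, is only $o(2^{-N})$. The weak difference therefore need not be small, and interpolating with tails yields no $\omega(\varepsilon)\to0$.

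What you would need is a Bona--Smith-type weak stability estimate whose constants involve only critical norms of both orbits, proved for the split scheme uniformly in $h$. That would require, among other things, an endpoint two-derivative smoothing bound for the stress difference (the case $s-q=2$, excluded in Lemma~\ref{ep:product-smoothing}). It would also require transport control at index $-1$ (excluded in Lemma~\ref{lem:transport-stability}).

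Step 2 presupposes continuous dependence in $C_tB^1\times C_tB^0$ and persistence of the lifespan $T$ for truncated data; neither is established here. Compactness of $\{u(t)\}$ only gives $S_Nu(t)\to u(t)$, not convergence of the solution issued from $S_Nu_0$.

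The paper avoids all quantitative critical difference estimates and all continuous-dependence results, as follows.
\begin{enumerate}
\item The tail propagation of Proposition~\ref{ep:uniform-critical-tails}, which you correctly identify, combined with weak-norm Lipschitz bounds in time, gives strong compactness (Proposition~\ref{ep:weak-compactness}).
\item The bound on $h\sum_n\|E_h\tau^n\|_{B^2}$ from Corollary~\ref{ep:stress-variation} gives the $L^1B^2$ regularity of limits.
\item Passing to the limit in the scheme (Lemma~\ref{ep:limit-identification}) and applying the energy uniqueness of Lemma~\ref{ep:uniqueness} identifies every limit as the exact solution. Only uniqueness is needed, not continuous dependence.
\end{enumerate}
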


Additional Besov regularity makes the convergence quantitative. The main
estimate is obtained in the weaker velocity--stress norms
\(B^0\times B^{-1}\); interpolation then gives the critical-norm rate
and the intermediate weak estimate stated below.

\begin{theorem}[Algebraic convergence in the base solution class]
\label{thm:base-algebraic}
Let \(0<\alpha\le1\), \(T>0\), and let an exact solution on
\([0,T]\) satisfy
\begin{equation}
 \begin{aligned}
 u&\in C([0,T];B^{1+\alpha}),\\
 \tau&\in C([0,T];B^\alpha)
        \cap L^1(0,T;B^{2+\alpha}).
 \end{aligned}
 \label{eq:endpoint-exact-class}
\end{equation}
Set \(\Gamma=\curl u-\mathcal R\tau\) and \(m_0=\langle u_0\rangle\).
Initialize \eqref{eq:recover-discrete}--\eqref{eq:scheme-gamma}
with the exact data and keep the velocity mean equal to \(m_0\).
For an integer \(M\ge1\), set \(h=T/M\), \(t_n=nh\), and
\(e_f^n=f(t_n)-f^n\) for \(f\in\{u,\tau,\Gamma\}\).
For all sufficiently small \(h\), the numerical orbit exists through
time \(T\) and satisfies
\begin{align}
 \max_{0\le n\le M}
  \bigl(\|e_u^n\|_{B^0}+\|e_\tau^n\|_{B^{-1}}\bigr)
 &\le C_{T,\alpha}h^{(1+\alpha)/2},
 \label{eq:base-mixed-rate}\\
 \max_{0\le n\le M}
 \bigl(\|e_\tau^n\|_{B^{-\alpha}}+\|e_\Gamma^n\|_{B^{-\alpha}}\bigr)
 &\le C_{T,\alpha}h^\alpha,
 \label{eq:base-algebraic-weak}\\
 \max_{0\le n\le M}
 \bigl(\|e_\tau^n\|_{B^0}+\|e_\Gamma^n\|_{B^0}
             +\|e_u^n\|_{B^1}\bigr)
 &\le C_{T,\alpha}h^{\alpha/2}.
 \label{eq:base-algebraic-critical}
\end{align}
The constant and the upper bound on \(h\) depend only on \(\alpha,T,a,b\)
and the norms in \eqref{eq:endpoint-exact-class}.
Proposition~\ref{prop:exact-interval-continuation} establishes the required
uniform numerical bound.  No additional time H\"older assumption is needed.
\end{theorem}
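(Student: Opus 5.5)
The plan is to prove the weak mixed estimate \eqref{eq:base-mixed-rate} first, deduce \eqref{eq:base-algebraic-weak} from it together with a high-norm bound, and then obtain \eqref{eq:base-algebraic-critical} by interpolation.

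\textbf{Step 1: uniform bounds.} By Proposition~\ref{prop:exact-interval-continuation}, for $h$ small the numerical orbit exists on $[0,T]$. It satisfies $\|u^n\|_{B^{1+\alpha}}+\|\tau^n\|_{B^\alpha}+\|\Gamma^n\|_{B^\alpha}\le C$, and $\tau^{n+1}$ gains one derivative at the nodes by Lemma~\ref{ep:product-smoothing} with $s=1+\alpha$. The heat-recurrence estimate gives $h\sum_n\|E_h\tau^n\|_{B^{2+\alpha}}\le C$. Consequently the errors are bounded in the high norms: $\|e_u^n\|_{B^{1+\alpha}}+\|e_\tau^n\|_{B^\alpha}\le C$.

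\textbf{Step 2: local errors in $B^0\times B^{-1}$.} Write the exact solution in Duhamel form over $[t_n,t_{n+1}]$, using the same heat factor and the same transport solve with frozen advector $u(t_n)$, and subtract the scheme. The local defects come from freezing the advector and the constitutive and source terms at $t_n$ (or $t_{n+1}$ for $\tau$ in the $\Gamma$-source). Each defect is an integral of $f(s)-f(t_n)$ against one derivative. No time H\"older assumption is available, so I would use the equations themselves to estimate increments:
\[
\|u(s)-u(t_n)\|_{B^{0}}\lesssim \int_{t_n}^{s}\bigl(\|u\cdot\nabla u\|_{B^0}+\|\Div\tau\|_{B^0}\bigr)\lesssim h+\int_{t_n}^{s}\|\tau\|_{B^{1}} .
\]
For the stress, the semigroup part is handled exactly by $E_h$. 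The remaining increment of $\tau$ in $B^{-1}$ is controlled by $h^{(1+\alpha)/2}$, via the parabolic estimate $\|(e^{(s-t_n)\Delta}-I)g\|_{B^{-1}}\lesssim (s-t_n)^{(1+\alpha)/2}\|g\|_{B^\alpha}$ together with the $L^1_tB^{2+\alpha}$ bound. In the products, the rough factor is placed in the lowest admissible space: $B^{-1}$ for the stress and $B^0$ for the velocity, with paraproduct estimates in $B^{-1}$ using $1+\alpha>1$. After summation this gives a total local error
\[
\sum_n\|\delta^n\|\lesssim h^{(1+\alpha)/2},
\]
where the sum of the $L^1_t$ pieces is handled by H\"older in time.

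\textbf{Step 3: stability and Gronwall.} Derive the error recurrence in $B^0\times B^{-1}$ for $(e_u,e_\tau)$, equivalently $(e_\Gamma,e_\tau)$ in $B^{-1}\times B^{-1}$ through velocity recovery. The transport errors $e_u^n\cdot\nabla f$ are estimated by $\|e_u\|_{B^0}\|f\|_{B^{1+\alpha}}$, which is admissible because the product $B^0\times B^{\alpha}\to B^{-1}$ is bounded for $\alpha>0$. The constitutive error $Q(\nabla e_u,\tau^{n+1})$ lands in $B^{-1}$ because $\tau^{n+1}\in B^{1+\alpha}$ from Step 1. A discrete Gronwall argument then yields \eqref{eq:base-mixed-rate}.

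I expect the main obstacle to be the term $Q(\nabla u,e_\tau)$ in the $\Gamma$-equation. There $e_\tau$ is only controlled in $B^{-1}$ while $\nabla u\in B^\alpha$, so the product fails in $B^{-1}$ for $\alpha\le1$. The first thing I would try is to use the $B^{-1}\to B^0$ smoothing of the stress step for the error, paying $h^{-1/2}$ per step. The resulting sum $h\sum(t_n-t_k)^{-1/2}$ is summable, so this gives a Volterra-type Gronwall with a weakly singular kernel.

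\textbf{Step 4: interpolation.} Interpolate in Besov spaces between the rate $h^{(1+\alpha)/2}$ in $B^{0}\times B^{-1}$ and the $O(1)$ bound in $B^{1+\alpha}\times B^{\alpha}$. The critical norm sits at fraction $\theta=1/(1+\alpha)$ of the way to the high norm, giving exponent $(1+\alpha)/2\cdot\alpha/(1+\alpha)=\alpha/2$ for $e_u$ in $B^1$ and $e_\tau$ in $B^0$. The estimate for $e_\Gamma$ in $B^0$ then follows from $\Gamma=\curl u-\mathcal R\tau$. The interpolation is with a logarithmic loss for $B_{\infty,1}$, which is removed by the usual real-interpolation argument with strict inequalities, or by simply redoing the dyadic split. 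For $B^{-\alpha}$, \eqref{eq:base-algebraic-weak} is obtained at fraction $(1-\alpha)/(1+\alpha)$ of the way from $B^{-1}$ to $B^\alpha$, giving exponent $\frac{1+\alpha}{2}\cdot\frac{2\alpha}{1+\alpha}=\alpha$.
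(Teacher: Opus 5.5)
Your argument has a genuine gap in Step~2. You claim a summed local error of order $h^{(1+\alpha)/2}$, but you never estimate the good-vorticity transport defect, and for $\alpha<1$ that defect is too large. The scheme freezes $\Gamma$ at $t_{n+1}$ in $h\,u^n\cdot\nabla\Gamma^{n+1}$, so the exact residual $R_\Gamma^n$ contains
\[
 \int_{I_n}\Div\bigl(u(t_n)\,(\Gamma(t_{n+1})-\Gamma(s))\bigr)\,ds ,
\]
and placing this in $B^{-1}$ requires the increment of $\Gamma$ in $B^0$. Unlike $\tau$, the good vorticity is not diffused: $\partial_t\Gamma=-u\cdot\nabla\Gamma+G$ lies only in $L^\infty_tB^{\alpha-1}$. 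Hence the increment is $O(h^\alpha)$ in $B^0$, and $\sum_n\|R_\Gamma^n\|_{B^{-1}}=O(h^\alpha)$.

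No sharper bound on the residual sum exists. Take the shear of Proposition~\ref{prop:positive-regularity-lower-bound} with $W=W_\alpha$. Mode $k$ of each residual has modulus $|1+ihk-e^{ihk}|\asymp\min(h^2k^2,h|k|)$ times its coefficient, and the single frequency $|k|\approx h^{-1}$ already gives
\[
 \sum_{n<M}\|R_\Gamma^n\|_{B^{-1}}\ge c_T\,h^\alpha(1+|\log h|)^{-2}.
\]
This is much larger than $h^{(1+\alpha)/2}$ when $\alpha<1$. Your Step~3 bounds the error by the accumulated defects, so it closes only at $h^\alpha$ in $B^0\times B^{-1}$. Step~4 then gives $h^{\alpha^2/(1+\alpha)}$ in the critical norms and $h^{2\alpha^2/(1+\alpha)}$ in $B^{-\alpha}$, which match the theorem only at $\alpha=1$. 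Measuring a velocity defect in $B^0$ does not help, since that defect is $\mathcal K(R_\Gamma^n+\mathcal RR_\tau^n)$.

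The missing idea is a frequency split at $\Lambda\asymp h^{-1/2}$.
\begin{itemize}
\item Above $\Lambda$, the error is not propagated at all. It is bounded directly by the uniform $B^\alpha$ bounds on the exact and numerical fields, at cost $\Lambda^{-1-\alpha}$ in $B^{-1}$.
\item Below $\Lambda$, the transport defect is $O(h)$ in $B^{\alpha-2}$. Since $P_\Lambda:B^{\alpha-2}\to B^{-1}$ has norm $\lesssim\Lambda^{1-\alpha}$, it costs only $h\Lambda^{1-\alpha}$.
\end{itemize}
The paper implements this by filtering a continuous reconstruction through the nodes (Lemma~\ref{lem:exponential-reconstruction} and Proposition~\ref{prop:coupled-reconstruction-forcing}, with forcing $h^{(1+\alpha)/2}+h\Lambda^{1-\alpha}+\Lambda^{-1-\alpha}$). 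It then applies the continuous mixed stability Lemma~\ref{lem:coupled-mixed-continuous-stability}.

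In that lemma, stress diffusion controls $\|z-\tau\|_{L^1_tB^1}$, which is how the term $Q(\nabla u,e_\tau)$ you flag gets absorbed. Your weakly singular discrete Gronwall can substitute for that step under two conditions. First, you must close it for the time-summed quantity $h\sum_n\|e_\tau^n\|_{B^0}$, not for $\max_n\|e_\tau^n\|_{B^0}$, which the theorem bounds only by $h^{\alpha/2}$. Second, you need Lemma~\ref{ep:product-smoothing} at $s=0$, $q=-1$, which is outside its stated range.

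Finally, Step~1 is circular as written. Proposition~\ref{prop:exact-interval-continuation} is itself proved from the conditional critical-norm error estimate. The error analysis must therefore be carried out under an assumed bound $X_n\le R$ and then combined with a continuation argument.
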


The algebraic estimates hold for each fixed \(\alpha>0\).
Their constants and step restrictions need not be uniform as
\(\alpha\downarrow0\). Theorem~\ref{thm:critical-convergence} treats
\(\alpha=0\) separately, without taking a limit in these rates.

The same bounds also quantify convergence above the critical index.
Interpolating with the propagated \(B^\alpha\) bound gives the following
tradeoff between the output regularity and the convergence exponent.

\begin{corollary}[Convergence in intermediate Besov norms]
\label{cor:positive-output}
Under the hypotheses of Theorem~\ref{thm:base-algebraic}, for every
\(0\le\sigma<\alpha\),
\begin{equation}
 \max_{0\le n\le M}\left(
   \|e_\tau^n\|_{B^\sigma}
  +\|e_\Gamma^n\|_{B^\sigma}
  +\|e_u^n\|_{B^{1+\sigma}}\right)
 \le C_{T,\alpha,\sigma}h^{(\alpha-\sigma)/2}.
 \label{eq:endpoint-main}
\end{equation}
\end{corollary}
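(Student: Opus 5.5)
The plan is to interpolate in Besov spaces between the critical-norm rate \eqref{eq:base-algebraic-critical} of Theorem~\ref{thm:base-algebraic} and a uniform bound on the errors in the propagated regularity class. For inhomogeneous Besov spaces \(B^s=B^s_{\infty,1}\) we use the interpolation inequality
\[
 \|f\|_{B^{\sigma}}\lesssim \|f\|_{B^{0}}^{1-\theta}\|f\|_{B^{\alpha}}^{\theta},
 \qquad \theta=\sigma/\alpha\in[0,1),
\]
which follows from a standard frequency splitting.
\begin{itemize}
\item On blocks \(j\le J\), bound the \(\ell^1\) sum by \(2^{J\sigma}\|f\|_{B^0}\).
\item On blocks \(j>J\), bound it by \(2^{-J(\alpha-\sigma)}\|f\|_{B^\alpha}\).
\item Optimize over \(J\).
\end{itemize}
This optimization produces a harmless constant depending on \(\alpha,\sigma\). (The \(\ell^1\) summability is not an issue here, since \(\sigma<\alpha\) strictly.)

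\textbf{Step 1.} Obtain uniform bounds
\[
 \max_n\bigl(\|e_\tau^n\|_{B^\alpha}+\|e_\Gamma^n\|_{B^\alpha}+\|e_u^n\|_{B^{1+\alpha}}\bigr)\le C_{T,\alpha}.
\]
The exact solution contributes via \eqref{eq:endpoint-exact-class}; since \(\mathcal R\) is a zeroth-order Fourier multiplier vanishing at zero frequency, it is bounded on \(B^\alpha\), so \(\Gamma\in C_tB^\alpha\). The numerical orbit contributes through the uniform numerical bound of Proposition~\ref{prop:exact-interval-continuation}. That proposition is invoked in Theorem~\ref{thm:base-algebraic}, and I take it to propagate \(\tau^n,\Gamma^n\in B^\alpha\) uniformly for small \(h\). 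Then the triangle inequality gives the error bound.

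\textbf{Step 2.} Apply the interpolation inequality to \(e_\tau^n\) and \(e_\Gamma^n\) with \(\theta=\sigma/\alpha\). Combining \eqref{eq:base-algebraic-critical} with Step 1 gives
\[
 \|e^n_\tau\|_{B^\sigma}+\|e^n_\Gamma\|_{B^\sigma}\lesssim (h^{\alpha/2})^{1-\sigma/\alpha}=h^{(\alpha-\sigma)/2}.
\]

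\textbf{Step 3.} For the velocity, use the recovery formula \(u=m_0+\mathcal K(\Gamma+\mathcal R\tau)\) together with the fact that both the exact and the numerical means equal \(m_0\). This yields \(e_u^n=\mathcal K(e_\Gamma^n+\mathcal R e_\tau^n)\). Since \(\mathcal K\) is a Fourier multiplier of order \(-1\) annihilating constants, it maps \(B^\sigma\to B^{1+\sigma}\), and \(\mathcal R\) is bounded on \(B^\sigma\). Hence \(\|e_u^n\|_{B^{1+\sigma}}\lesssim\|e_\Gamma^n\|_{B^\sigma}+\|e_\tau^n\|_{B^\sigma}\), which gives \eqref{eq:endpoint-main}. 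The case \(\sigma=0\) is exactly \eqref{eq:base-algebraic-critical}.

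The only step with any real content is Step 1: we must make sure the uniform numerical bound from Proposition~\ref{prop:exact-interval-continuation} is stated in \(B^\alpha\) for \(\tau^n,\Gamma^n\), with constants depending only on the data in \eqref{eq:endpoint-exact-class}, and not merely in the critical norms. If that proposition gave only critical bounds, I would fall back on re-running the stability argument with the discrete smoothing estimate of Lemma~\ref{ep:product-smoothing} at \(s=1+\alpha\) to propagate \(B^\alpha\).
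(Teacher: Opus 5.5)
Your proposal is correct and is essentially the paper's proof. Both arguments take uniform $B^\alpha$ bounds on $e_\tau^n,e_\Gamma^n$ from the exact class and Proposition~\ref{prop:exact-interval-continuation}, interpolate dyadically between $B^0$ and $B^\alpha$ with exponent $\sigma/\alpha$, and recover the velocity through $e_u^n=\mathcal K(e_\Gamma^n+\mathcal Re_\tau^n)$. Your concern in Step~1 is settled by \eqref{eq:exact-interval-discrete-high}, which is stated directly in the norms $B^{1+\alpha}$, $B^\alpha$, $B^\alpha$ with constants depending only on the exact-solution data, so the fallback to the smoothing lemma is not needed.
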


\begin{proof}
The case \(\sigma=0\) is Theorem~\ref{thm:base-algebraic}.
For \(0<\sigma<\alpha\), the exact-solution bounds and
Proposition~\ref{prop:exact-interval-continuation} bound both stress and good-vorticity
errors uniformly in \(B^\alpha\).  Dyadic interpolation gives
\[
 \|e_f^n\|_{B^\sigma}
 \lesssim \|e_f^n\|_{B^0}^{1-\sigma/\alpha}
           \|e_f^n\|_{B^\alpha}^{\sigma/\alpha},
 \qquad f=\tau,\Gamma.
\]
The velocity estimate follows from
\(e_u^n=\mathcal K(e_\Gamma^n+\mathcal Re_\tau^n)\).
\end{proof}

To improve the rate beyond the base class, we propagate the additional
regularity of the numerical stress and good vorticity. This stronger
bound allows a first-order estimate in a weaker norm to yield the
following critical-norm rate.

\begin{theorem}[Convergence rates under additional Besov regularity]
\label{thm:endpoint-rate-family}
Let \(0<\theta\le1\), \(T>0\), and suppose that an exact solution on
\([0,T]\) satisfies
\begin{equation}
 u\in C([0,T];B^{2+\theta}),\qquad
 \tau\in C([0,T];B^{1+\theta})
       \cap L^1(0,T;B^{3+\theta}).
 \label{eq:endpoint-rate-class}
\end{equation}
Initialize the scheme with the exact
data and conserved mean.  Then, for all sufficiently small \(h=T/M\),
\begin{equation}
 \max_{0\le n\le M}
 \left(
  \|\tau(t_n)-\tau^n\|_{B^0}
 +\|\Gamma(t_n)-\Gamma^n\|_{B^0}
 +\|u(t_n)-u^n\|_{B^1}
 \right)
 \le C_{T,\theta} h^{(1+\theta)/2}.
 \label{eq:endpoint-rate-family}
\end{equation}
The constant and the step threshold depend only on \(T,\theta,a,b\)
and the norms in \eqref{eq:endpoint-rate-class}, and not on \(h\).
In particular, \(\theta=1\) gives first-order convergence when the
solution has two additional spatial derivatives relative to the
critical class, as specified in \eqref{eq:endpoint-rate-class}.
For every \(\theta>0\), the exponent \((1+\theta)/2\) improves on
the exponent \(1/2\) obtained from Theorem~\ref{thm:base-algebraic}
with \(\alpha=1\). That theorem also covers the case \(\theta=0\).
\end{theorem}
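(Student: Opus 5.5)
The plan has three steps: propagate higher regularity of the numerical iterates, prove a first-order estimate in a weak norm, and interpolate. Set \(\alpha=1+\theta\in(1,2]\), so the hypotheses \eqref{eq:endpoint-rate-class} say exactly \(u\in C_tB^{1+\alpha}\) and \(\tau\in C_tB^\alpha\cap L^1_tB^{2+\alpha}\). The target rate \(h^{(1+\theta)/2}=h^{\alpha/2}\) is then the natural continuation of \eqref{eq:base-algebraic-critical} to \(1<\alpha\le2\).

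\textbf{Uniform bounds.} First, the hypotheses contain those of Theorem~\ref{thm:base-algebraic} with \(\alpha=1\). That theorem gives existence of the orbit through \(T\), uniform bounds in \(B^2\times B^1\times B^1\), and the error bounds already proved there. I then upgrade the numerical bounds to \(\Gamma^n\in B^{1+\theta}\) and \(\tau^n\in B^{1+\theta}\), together with \(h\sum_n\|E_h\tau^n\|_{B^{3+\theta}}\le C\). For this I will apply Lemma~\ref{ep:product-smoothing} at regularity one level higher. It is available because the recovered velocity \(u^n\) is now bounded in \(B^2\), so the admissible range \(1\le s\le2\) covers the advecting field. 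The source terms \(Q(\nabla u^n,\tau^{n+1})\) and \(D(u^n)\) are then handled by the tame product estimates in \(B^{1+\theta}\), closed by a discrete Gronwall argument.

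\textbf{First-order weak estimate.} I compare the exact solution with the scheme over one step via Duhamel and write the error recursion in the pair \(B^0\times B^{-1}\) (velocity, stress), or equivalently for \((e_\Gamma,e_\tau)\) in \(B^{-1}\times B^{-1}\). The local defect consists of the commutator between heat and transport and of freezing the advector \(u^n\) and the stress in the source over \([t_n,t_{n+1}]\). Each of these costs one time derivative, i.e.\ roughly two spatial derivatives of \(\tau\) or one of \(u\), measured at the weak level. In the weak norm the defect is therefore bounded by
\[
h^2\bigl(\|u\|_{C_tB^{2+\theta}}+\|\tau\|_{C_tB^{1+\theta}}\bigr)^2+h^2\|\tau(t)\|_{B^{3+\theta}}
\]
integrated over the step, and these bounds are summable thanks to the \(L^1_tB^{3+\theta}\) hypothesis. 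Stability in \(B^0\times B^{-1}\) comes from the coupled stability estimate already used in Theorem~\ref{thm:base-algebraic}. Its Lipschitz constants require the numerical and exact solutions in \(B^1\times B^0\), which is available. Summing gives
\[
\max_n\bigl(\|e_u^n\|_{B^0}+\|e_\tau^n\|_{B^{-1}}\bigr)\le Ch.
\]

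\textbf{Interpolation and main obstacle.} Finally I interpolate dyadically. The errors \(e_\tau\) and \(e_\Gamma\) are uniformly bounded in \(B^{1+\theta}\) by the first step, and \(e_\Gamma\) is \(O(h)\) in \(B^{-1}\) by the recovery formula. Interpolating between \(B^{-1}\) and \(B^{1+\theta}\) gives
\[
\|e\|_{B^0}\lesssim\|e\|_{B^{-1}}^{(1+\theta)/(2+\theta)}\|e\|_{B^{1+\theta}}^{1/(2+\theta)},
\]
which yields only \(h^{(1+\theta)/(2+\theta)}\). That exponent is strictly smaller than \((1+\theta)/2\), so naive interpolation is insufficient. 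Instead I plan to interpolate at the level of the time-summed parabolic quantity. For the stress, the \(B^{-1}\) bound \(h\) combined with the \(L^1\)-type gain \(h\sum\|E_h\cdot\|_{B^{3+\theta}}\) should give \(h^{(1+\theta)/2}\), as the parabolic scaling \(h\sim\) two derivatives suggests. The velocity error then follows by feeding this stress bound into the transport equation for \(e_\Gamma\), estimated directly in \(B^0\) with source \(\curl\) and \(Q\) errors. This last step, obtaining the sharp \(h^{(1+\theta)/2}\) rather than the naive interpolation exponent, is where I expect the main difficulty. It also needs the bound \(\|e_\tau^n\|_{B^0}\lesssim h^{(1+\theta)/2}\) to be controlled without losing a logarithm at the endpoint \(\theta=1\).
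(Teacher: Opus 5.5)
There is a genuine gap at the decisive step, the one you flag yourself. Your first-order estimate in \(B^0\times B^{-1}\) is already \eqref{eq:base-mixed-rate} with \(\alpha=1\), so it does not use the additional regularity. Interpolating it against nodal \(B^{1+\theta}\) bounds can only give \(h^{(1+\theta)/(2+\theta)}\), and the proposed repair does not close this gap. A nodal bound \(\|e_\tau^n\|_{B^{-1}}\le Ch\) combined with a time-summed bound such as \(h\sum_n\|E_h\tau^n\|_{B^{3+\theta}}\le C\) controls at best time averages of \(\|e_\tau^n\|_{B^0}\). It does not control \(\max_n\|e_\tau^n\|_{B^0}\), which is what \eqref{eq:endpoint-rate-family} asserts. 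That bound also concerns \(E_h\tau^n\) rather than the iterate \(\tau^n\) that enters \(e_\tau^n\). Nor does Lemma~\ref{ep:product-smoothing} supply it: the lemma is restricted to \(1\le s\le2\), and you would need \(s=2+\theta\). The direct \(B^0\) estimate for \(e_\Gamma\) is left unspecified, and a naive version loses. In \(B^0\) the good-vorticity transport residual needs increments of \(\Gamma\) in \(B^1\), while \(\partial_t\Gamma\) is only bounded in \(B^\theta\).

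The missing idea is to shift the whole weak estimate up by \(\theta\) derivatives. Run the discrete error recursion \eqref{eq:full-stress-error}--\eqref{eq:full-gamma-error} for \((e_\tau,e_\Gamma)\) in \(B^r\), with \(r=\theta-1\in(-1,0]\), instead of \(B^{-1}\). Stability does not suffer, for four reasons:
\begin{itemize}
\item Lemma~\ref{lem:transport-stability} holds for \(-1<r\le1\) with only \(\|u^n\|_{B^1}\).
\item The products satisfy \(\|fg\|_{B^r}\le C_\theta\|f\|_{B^1}\|g\|_{B^r}\), since \(1+r=\theta>0\).
\item The commutator \([\mathcal R,v\cdot\nabla]\) is bounded on \(B^r\).
\item Recovery bounds \(\|e_u^n\|_{B^\theta}\) by \(\|e_\tau^n\|_{B^r}+\|e_\Gamma^n\|_{B^r}\). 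This is exactly what the source differences and the terms \(e_u^n\cdot\nabla\tau(t_{n+1})\), \(e_u^n\cdot\nabla\Gamma(t_{n+1})\), written in divergence form, require.
\end{itemize}
The extra regularity is spent only on consistency:
\begin{itemize}
\item The heat-quadrature errors need \(S(u(t_n),\tau(t_n))\in B^{r+2}=B^{1+\theta}\) and \(u\cdot\nabla\tau\in L^1_TB^{r+2}\).
\item The remaining residuals use \(\partial_tu,\partial_t\tau\in L^1_TB^1\) and the Lipschitz bound of \(\Gamma\) in \(B^\theta\).
\item The sequential correction \(hJ^n\), which your plan omits, sums to \(O(h)\) in \(B^r\) by the \(B^1\) stress-variation bound at \(\alpha=1\).
\end{itemize}
Discrete Gronwall then gives \(\max_n(\|e_\tau^n\|_{B^{\theta-1}}+\|e_\Gamma^n\|_{B^{\theta-1}})\le Ch\). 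Since \(B^{\theta-1}\) and \(B^{1+\theta}\) are exactly two derivatives apart, nodal interpolation yields \(\|e_f^n\|_{B^0}\lesssim\|e_f^n\|_{B^{\theta-1}}^{(1+\theta)/2}\|e_f^n\|_{B^{1+\theta}}^{(1-\theta)/2}\le Ch^{(1+\theta)/2}\) at every node. Recovery then gives the velocity estimate. At \(\theta=1\) one has \(r=0\), so neither interpolation nor any logarithmic loss arises. The uniform \(B^{1+\theta}\) bounds also need no parabolic smoothing. The data lie in \(B^{1+\theta}\), and transport stability at index \(1+\theta\le2\), with \(\|u^n\|_{B^{1+\theta}}\le C\|u^n\|_{B^2}\), propagates them directly by a tame Gronwall argument.
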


\begin{remark}[Sharpness]
\label{rem:sharpness}
The convergence and smoothing results are sharp in the following senses.
\begin{enumerate}[label=(\roman*),leftmargin=*,itemsep=3pt]
\item \emph{Algebraic convergence exponents.}
Theorems~\ref{thm:base-algebraic} and~\ref{thm:endpoint-rate-family}
give the critical-norm rate \(h^{\alpha/2}\) for \(0<\alpha\le2\).
For every such \(\alpha\), Proposition~\ref{prop:positive-regularity-lower-bound}
gives fixed initial data whose velocity and good-vorticity errors are
bounded below by \(c h_m^{\alpha/2}/(1+|\log h_m|)^2\) along a
sequence \(h_m\to0\). Thus no larger algebraic exponent is possible,
even with a constant depending on the datum. Logarithmic improvements
are not excluded.

\item \emph{Critical convergence without an algebraic rate.}
Proposition~\ref{prop:endpt-lower-bound} and
Remark~\ref{rem:critical-shear-matching} give matching logarithmic
upper and lower bounds for one fixed critical shear. Hence even a
single fixed solution can have no positive algebraic convergence rate;
no universal logarithmic rate for critical data is asserted.

\item \emph{Nodal stress regularity.}
Under critical velocity control, the one-derivative nodal gain in
Proposition~\ref{ep:stress-smoothing} cannot be increased.
Appendix~\ref{sec:smoothing-sharpness} gives fixed \(u^0\in B^1\)
and smooth \(\tau^0\) for which
\(\tau^1\notin B^{1+\varepsilon}\) for every \(\varepsilon>0\).
\end{enumerate}
The convergence lower bounds in (i)--(ii) concern this Eulerian
time-discrete scheme and use \(a=b=0\) and nonzero velocity means.
They do not establish sharpness in every restricted parameter or mean class.
\end{remark}

The results concern time discretization without spatial truncation.
The Fourier-grid experiments in Section~\ref{sec:numerics} test
temporal convergence at fixed spatial resolutions; spatial
discretization errors are not covered by the analysis.

\subsection{Continuous well-posedness recalled}

The periodic local theory in \cite[Section~7]{CLYY}, stated for
\(a\in[0,1]\), constructs critical \(B^1\times B^0\) solutions with
an additional \(H^1\) assumption on the initial data and applies to
the \(D(u)\) normalization in \eqref{eq:tau}. A Galilean change of
variables handles nonzero velocity mean. Dyadic estimates and
\(L^1B^{2+\alpha}\) stress control propagate positive Besov
regularity on finite existence intervals.
Our theorems apply to any exact solution in the stated class, without
an additional \(H^1\) assumption. Uniqueness in the critical class is
proved in Lemma~\ref{ep:uniqueness}.

Section~\ref{sec:formulation-scheme} defines the scheme, and
Section~\ref{sec:besov-stability} collects the analytical estimates and
proves local discrete stability. Section~\ref{sec:critical-convergence}
develops discrete stress smoothing and frequency-tail estimates to prove
critical convergence. Sections~\ref{sec:quantitative-proofs}
and~\ref{sec:endpoint-rate-family} establish the algebraic rates;
Section~\ref{sec:endpoint-sharpness} proves the constitutive-source
obstruction and the error lower bounds.

\section{Good-Vorticity Formulation and Discrete Scheme}
\label{sec:formulation-scheme}

\subsection{Good-vorticity formulation}
\label{sec:good-vorticity}

Writing \(\omega=\curl u\), taking the curl of \eqref{eq:u} gives
\begin{equation}
  \partial_t\omega+u\cdot\nabla\omega=\curl\Div\tau .
  \label{eq:vorticity}
\end{equation}
For a periodic scalar \(f\), we use the convention
\begin{equation}
  \widehat{\Delta^{-1}f}(0)=0,\qquad
  \widehat{\Delta^{-1}f}(k)=-|k|^{-2}\widehat f(k)\quad(k\ne0).
  \label{eq:inverse-laplacian-convention}
\end{equation}
Thus \(\Delta^{-1}\) acts on the zero-mean part. Recall that the tensor operator \(\mathcal R=\curl\Div\Delta^{-1}\) is well defined
because \(\langle\curl\Div A\rangle=0\).
For a periodic velocity, \(\langle\curl u\rangle=0\), and hence both
\(\mathcal R\tau\) and \(\Gamma=\curl u-\mathcal R\tau\) have zero mean.
Integrating \eqref{eq:u} over the torus gives
\begin{equation}
  \langle u(t)\rangle=m_0,\qquad
  u(t)=m_0+\mathcal K\bigl(\Gamma(t)+\mathcal R\tau(t)\bigr).
  \label{eq:mean-mode-recovery}
\end{equation}
The numerical scheme likewise stores \(m_0\) separately and includes it
in the transport velocity.
Since $\Delta\mathcal R\tau=\curl\Div\tau$, applying $\mathcal R$ to
\eqref{eq:tau} gives
\begin{equation}
  \partial_t\mathcal R\tau-\Delta\mathcal R\tau+a\mathcal R\tau
  =
  \mathcal R D(u)-\mathcal RQ(\nabla u,\tau)-\mathcal R(u\cdot\nabla\tau).
  \label{eq:Rtau-raw}
\end{equation}
Writing
\[
  \mathcal R(u\cdot\nabla\tau)
  =
  u\cdot\nabla\mathcal R\tau
  +
  [\mathcal R,u\cdot\nabla]\tau ,
\]
and subtracting \eqref{eq:Rtau-raw} from \eqref{eq:vorticity}, we obtain
\begin{equation}
  \partial_t\Gamma+u\cdot\nabla\Gamma
  =
  a\mathcal R\tau
  -\mathcal R D(u)
  +\mathcal RQ(\nabla u,\tau)
  +[\mathcal R,u\cdot\nabla]\tau .
  \label{eq:gamma}
\end{equation}
The differentiated stress force $\curl\Div\tau$ has canceled.
The source in \eqref{eq:gamma} consists of order-zero operators applied
to \(\tau\), \(D(u)\), and \(Q(\nabla u,\tau)\), together with a
transport commutator.
We denote its right-hand side by \(G(u,\tau)\).

We first estimate this source at positive Besov indices, where its
constitutive product is controlled. These bounds will be used to
construct and continue the positive-regularity numerical orbit.

\begin{lemma}[Source and commutator estimates]
\label{lem:gamma-source}
\label{lem:critical-tame-high}
Let \(0<\alpha\le2\), let \(u\in B^{1+\alpha}\) be divergence free,
and let \(\tau\in B^\alpha\). Then
\begin{align}
 \|G(u,\tau)\|_{B^\alpha}
 &\le C_{a,\alpha}(1+\|u\|_{B^{1+\alpha}})
                (\|u\|_{B^{1+\alpha}}+\|\tau\|_{B^\alpha}),
 \label{eq:gamma-source}\\
 \|[\mathcal R,u\cdot\nabla]\tau\|_{B^\alpha}
 &\le C_\alpha\bigl(
     \|\nabla u\|_\infty\|\tau\|_{B^\alpha}
    +\|u-\langle u\rangle\|_{B^{1+\alpha}}\|\tau\|_{B^0}\bigr).
 \label{eq:critical-tame-R}
\end{align}
The second estimate will control high numerical norms through critical
norms in the continuation argument.
\end{lemma}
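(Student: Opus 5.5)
We prove the commutator estimate \eqref{eq:critical-tame-R} first, since the source bound \eqref{eq:gamma-source} then reduces to product estimates.

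\textbf{Commutator estimate.} Because constants commute with \(\mathcal R\), we have \([\mathcal R,\langle u\rangle\cdot\nabla]=0\). We may therefore replace \(u\) by \(v=u-\langle u\rangle\). Write \(\mathcal R\) as the order-zero Fourier multiplier with symbol \(m(\xi)\), which is homogeneous and smooth away from \(\xi=0\) and vanishes at \(\xi=0\). Expand \(v\cdot\nabla\tau\) with Bony's decomposition \(T_{v}\nabla\tau+T_{\nabla\tau}v+R(v,\nabla\tau)\), summing over the components of \(v\).

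\emph{Paraproduct part.} The main term is \([\mathcal R,T_{v}\cdot\nabla]\tau=\sum_j[\mathcal R\Delta_j\cdot,S_{j-1}v]\cdot\nabla\Delta_j\tau\), up to finitely many neighbouring blocks. On the annulus at scale \(2^j\), \(\mathcal R\Delta_j\) is convolution with a kernel \(2^{2j}h(2^j\cdot)\) where \(h\) is Schwartz. The standard first-order Taylor commutator lemma (\cite[Lemma~2.97]{BCD11}) gives
\[
 \|[\mathcal R\Delta_j,S_{j-1}v]\nabla\Delta_j\tau\|_\infty\lesssim\|\nabla v\|_\infty\|\Delta_j\tau\|_\infty .
\]
Multiplying by \(2^{j\alpha}\) and summing over \(j\) gives \(\|\nabla u\|_\infty\|\tau\|_{B^\alpha}\).

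\emph{Remaining pieces.} We do not expand these as commutators; each term is bounded separately. Since \(\mathcal R\) is bounded on \(B^\alpha\), it suffices to control \(T_{\nabla\tau}v\) and \(R(v,\nabla\tau)\). Using \(\Div v=0\), write \(R(v,\nabla\tau)=\Div R(v,\tau)\). Then
\[
 \|T_{\nabla\tau}v\|_{B^\alpha}\lesssim\|\nabla\tau\|_{B^{-1}}\|v\|_{B^{1+\alpha}},
 \qquad
 \|\Div R(v,\tau)\|_{B^\alpha}\lesssim\|R(v,\tau)\|_{B^{1+\alpha}}\lesssim\|v\|_{B^{1+\alpha}}\|\tau\|_{B^0}.
\]
Both bounds are valid because \(1+\alpha>0\), and \(\|\nabla\tau\|_{B^{-1}}\lesssim\|\tau\|_{B^0}\) holds in the \(\ell^1\)-based spaces. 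The analogous pieces inside \(\mathcal R(v\cdot\nabla\tau)\) obey the same bounds. This yields \eqref{eq:critical-tame-R}.

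The main obstacle is the paraproduct commutator at frequencies \(j\) near \(-1\), together with the singularity of \(m\) at the origin. The periodic inhomogeneous low block is handled directly: on finitely many frequencies every operator is bounded by \(\|v\|_{B^{1+\alpha}}\|\tau\|_{B^0}\). The dyadic kernel argument is needed only for \(j\ge0\), where \(m\) restricted to the annulus is smooth.

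\textbf{Source bound.} We have \(G=a\mathcal R\tau-\mathcal R D(u)+\mathcal R Q(\nabla u,\tau)+[\mathcal R,u\cdot\nabla]\tau\).
\begin{itemize}
\item \(\|a\mathcal R\tau\|_{B^\alpha}\lesssim a\|\tau\|_{B^\alpha}\), since \(\mathcal R\) is bounded on \(B^\alpha\).
\item \(\|\mathcal R D(u)\|_{B^\alpha}\lesssim\|u\|_{B^{1+\alpha}}\).
\item For \(\alpha>0\), \(B^\alpha\) is an algebra. Hence \(\|Q(\nabla u,\tau)\|_{B^\alpha}\lesssim\|\nabla u\|_{B^\alpha}\|\tau\|_{B^\alpha}\lesssim\|u\|_{B^{1+\alpha}}\|\tau\|_{B^\alpha}\), with constant independent of \(b\in[-1,1]\).
\item For the commutator, use \(\|\nabla u\|_\infty\lesssim\|u\|_{B^{1+\alpha}}\), together with \(\|u-\langle u\rangle\|_{B^{1+\alpha}}\le\|u\|_{B^{1+\alpha}}\) and \(\|\tau\|_{B^0}\le\|\tau\|_{B^\alpha}\).
\end{itemize}
Collecting these terms gives \((1+\|u\|_{B^{1+\alpha}})(\|u\|_{B^{1+\alpha}}+\|\tau\|_{B^\alpha})\), which is \eqref{eq:gamma-source}.
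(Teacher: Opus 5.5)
Your proof is correct and follows essentially the same route as the paper: you remove the mean of \(u\), apply Bony's decomposition with the kernel commutator estimate on the low--high paraproduct, bound the high--low terms directly through their low-frequency factor \(\nabla\tau\in B^{-1}\), use incompressibility to put the derivative on the output of the remainder, and treat the finitely many lowest periodic modes directly. The only cosmetic difference is that you bound \(Q(\nabla u,\tau)\) using the algebra property of \(B^\alpha\) rather than the tame product law the paper records, and this suffices for \eqref{eq:gamma-source}.
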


\begin{proof}
The order-zero periodic multiplier \(\mathcal R\), with zero symbol at
frequency zero, has uniformly bounded localized kernels and is bounded
on every \(B^r\). The linear terms in \(G\) are bounded by
\(C(a\|\tau\|_{B^\alpha}+\|u\|_{B^{1+\alpha}})\).
The positive-index product law gives
\begin{equation}
 \|\mathcal RQ(\nabla u,\tau)\|_{B^\alpha}
 \lesssim \|\nabla u\|_\infty\|\tau\|_{B^\alpha}
         +\|\tau\|_\infty\|\nabla u\|_{B^\alpha}.
 \label{eq:source-Q-detail}
\end{equation}

For the commutator, subtract the mean of \(u\), which commutes with
\(\mathcal R\). Bony's decomposition gives the localized bounds
\begin{align*}
 \|[\mathcal R,S_{k-1}u\cdot\nabla]\Delta_k\tau\|_\infty
 &\lesssim\|\nabla S_{k-1}u\|_\infty\|\Delta_k\tau\|_\infty,\\
 \|[\mathcal R,\Delta_ku\cdot\nabla]S_{k-1}\tau\|_\infty
 &\lesssim\|\Delta_ku\|_\infty
          \sum_{\ell\le k-2}2^\ell\|\Delta_\ell\tau\|_\infty,\\
 \|\Delta_j[\mathcal R,\Delta_ku\cdot\nabla]
                         \widetilde\Delta_k\tau\|_\infty
 &\lesssim2^j\|\Delta_ku\|_\infty
                    \|\widetilde\Delta_k\tau\|_\infty,
 \qquad j\le k+3.
\end{align*}
Here \(\widetilde\Delta_k\) is the sum of the three neighboring blocks.
The first bound is a kernel commutator estimate. In the second,
\(\mathcal R\) is localized to each low input block or annular output;
no \(L^\infty\) bound for \(\mathcal R S_{k-1}\) is used.
Incompressibility moves the derivative in the last bound to the output
block. The finitely many lowest periodic modes obey the same bounds.

Weighted summation of the low--high terms gives
\(C\|\nabla u\|_\infty\|\tau\|_{B^\alpha}\).
The high--low terms are bounded by
\[
 C\sum_k2^{(1+\alpha)k}\|\Delta_ku\|_\infty
           \sum_{\ell\le k-2}2^{\ell-k}\|\Delta_\ell\tau\|_\infty
 \le C\|u-\langle u\rangle\|_{B^{1+\alpha}}\|\tau\|_{B^0}.
\]
For the high--high terms, sum first in \(j\), using
\(\sum_{j\le k+3}2^{(1+\alpha)j}\lesssim2^{(1+\alpha)k}\).
This gives the same bound and proves \eqref{eq:critical-tame-R}.
In particular,
\begin{equation}
 \|[\mathcal R,u\cdot\nabla]\tau\|_{B^\alpha}
 \lesssim\|\nabla u\|_{B^\alpha}\|\tau\|_{B^\alpha}.
 \label{eq:source-comm-detail}
\end{equation}
Together with the linear and product bounds, this proves
\eqref{eq:gamma-source}.
\end{proof}

\subsection{Discrete scheme}
\label{sec:discrete-scheme}

Let $t_n=n\dt$.  For \(t\ge0\), define
\[
  E_t=e^{t(\Delta-a)}.
\]
The practical solver is Fourier pseudospectral with the Leray projection,
Biot--Savart recovery, and a standard $2/3$ dealiasing filter.  We evolve
$\tau$ and $\Gamma$ to retain the stress-force cancellation before
recovering the velocity.  The semi-implicit treatment of inviscid transport is
in the spirit of \cite{CLWNS26}.

Set \(m_0=\langle u_0\rangle\) and keep this zero Fourier mode fixed at every
time step.  Given $(\tau^n,\Gamma^n)$, recover
\begin{equation}
  u^n=m_0+\mathcal K\bigl(\Gamma^n+\mathcal R\tau^n\bigr).
  \label{eq:recover-discrete}
\end{equation}
The recovered velocity is divergence free and has mean \(m_0\).
The stress equation uses the old constitutive source in the step
\begin{equation}
  \tau^{n+1}
  +h\,u^n\cdot\nabla\tau^{n+1}
  =
  E_h\tau^n
  +hE_h\bigl(D(u^n)-Q(\nabla u^n,\tau^n)\bigr).
  \label{eq:scheme-tau-good}
\end{equation}
Diffusion and damping are integrated on the right-hand side.  The
stationary transport maximum principle applies to the operator
\(I+hu^n\cdot\nabla\) on the left.
Next define the good-vorticity source
\begin{equation}
  G^n
  =
  a\mathcal R\tau^n
  -\mathcal R D(u^n)
  +\mathcal RQ(\nabla u^n,\tau^{n+1})
  +[\mathcal R,u^n\cdot\nabla]\tau^n .
  \label{eq:Gn}
\end{equation}
Update $\Gamma$ by the semi-implicit transport step
\begin{equation}
  \Gamma^{n+1}
  +h\,u^n\cdot\nabla\Gamma^{n+1}
  =
  \Gamma^n+hG^n .
  \label{eq:scheme-gamma}
\end{equation}
Thus the two transport solves are performed in sequence. For later use,
write the stress source and the correction due to using \(\tau^{n+1}\) as
\begin{equation}
 S^n=D(u^n)-Q(\nabla u^n,\tau^n),\qquad
 J^n=\mathcal RQ(\nabla u^n,\tau^{n+1}-\tau^n),\qquad
 G^n=G(u^n,\tau^n)+J^n .
 \label{eq:source-delay}
\end{equation}
At critical regularity, \(S^0\in L^\infty\), and the heat factor
followed by the transport solve gives \(\tau^{n+1}\in B^1\).
Section~\ref{sec:critical-convergence} therefore estimates \(G^n\)
directly from \eqref{eq:Gn}. The decomposition with \(J^n\) is used
for positive-regularity error estimates; at the critical index its
summands are only distributions and need not separately belong to \(B^0\).
Each term in \(G^n\) has zero spatial mean: \(\mathcal R\) removes its zero
mode, and the remaining transport divergence integrates to zero.  Since
\(\Div u^n=0\), integration of \eqref{eq:scheme-gamma} therefore shows that
\(\langle\Gamma^{n+1}\rangle=\langle\Gamma^n\rangle\).  Exact
initialization gives \(\langle\Gamma^0\rangle=0\), so the zero-mean
condition needed by \(\mathcal K\) is preserved by the scheme.
Finally recover $u^{n+1}$ from \eqref{eq:recover-discrete}.

\section{Analytical Estimates and Discrete Stability}
\label{sec:besov-stability}

We collect the estimates used in the convergence proofs. Time regularity
controls increments of the exact solution, and uniqueness identifies
limits of the numerical scheme. The transport resolvent estimate gives
existence and stability for the linear equations at each time step.

\subsection{Multipliers and critical Besov estimates}
\label{sec:functional-framework}

\begin{lemma}[Multiplier and semigroup estimates]
\label{lem:multiplier-bookkeeping}
Let \(r\in\mathbb R\).  For a scalar \(f\), a tensor \(F\), and
\(t\ge0\),
\begin{align}
 \norm{\mathcal RF}{\B r}
 &\le C\norm{F}{\B r},                                      \label{eq:R-bookkeeping}\\
 \norm{\mathcal Kf}{B^{r+1}}
 &\le C\norm{f}{\B r},                                      \label{eq:K-bookkeeping}\\
 \norm{E_tf}{\B r}
 &\le e^{-at}\norm{f}{\B r}\le\norm{f}{\B r},              \label{eq:E-bookkeeping}\\
 \norm{(I-E_t)f}{\B r}
 &\le C_{a,\rho}t^{\rho/2}
       \norm{f}{B^{r+\rho}},
 \qquad 0<\rho\le2,\quad 0<t\le1.                           \label{eq:E-difference}
\end{align}
The heat estimates hold componentwise for vector and tensor fields as well,
without a zero-mean restriction.  With the convention that \(\mathcal K\)
annihilates constants, \(\curl\mathcal Kf=f-\langle f\rangle\); the
identity \(\curl\mathcal Kf=f\) requires \(\langle f\rangle=0\).
\end{lemma}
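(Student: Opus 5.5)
All four estimates reduce to dyadic bounds for Fourier multipliers on individual Littlewood--Paley blocks. We then weight by \(2^{jr}\) and sum in \(\ell^1\). Throughout we use the periodic Bernstein-type fact: if a multiplier \(m\) is smooth and homogeneous of degree \(d\) away from the origin, then on each annular block \(\|m(D)\Delta_jf\|_{L^\infty}\le C2^{jd}\|\Delta_jf\|_{L^\infty}\) for \(j\ge0\). The constant is uniform in \(j\). To see this, write \(m(D)\Delta_j=m(D)\widetilde\varphi(2^{-j}D)\Delta_j\) with \(\widetilde\varphi\) supported in a slightly enlarged annulus. The kernel of \(m(\xi)\widetilde\varphi(2^{-j}\xi)\) on \(\R^2\) is \(2^{jd}\) times an \(L^1\)-dilate of a fixed Schwartz function, and periodization preserves the \(L^1\) bound. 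The low block \(j=-1\) contains only finitely many lattice frequencies and is handled by hand.

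\textbf{Bounds \eqref{eq:R-bookkeeping} and \eqref{eq:K-bookkeeping}.} For \(\mathcal R\), the symbol \(-\xi^\perp\cdot(\text{row action})\xi/|\xi|^2\) is homogeneous of degree \(0\) and vanishes at \(k=0\). This gives \(\|\Delta_j\mathcal RF\|_\infty\le C\|\Delta_jF\|_\infty\) for \(j\ge0\). On the block \(j=-1\), the nonzero modes satisfy \(1\le|k|\le C\), so \(\mathcal R\) acts on finitely many modes. Each Fourier coefficient is bounded by the sup norm, which yields \(\|\Delta_{-1}\mathcal RF\|_\infty\le C\|\Delta_{-1}F\|_\infty\). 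Summing with weights \(2^{jr}\) proves \eqref{eq:R-bookkeeping}. The same argument applies to \(\mathcal K=\nabla^\perp\Delta^{-1}\), which kills the zero mode and has a symbol homogeneous of degree \(-1\). Here the block gain is \(2^{-j}\), and this is exactly the shift from \(B^r\) to \(B^{r+1}\). The identity \(\curl\mathcal Kf=f-\langle f\rangle\) follows from \(\curl\nabla^\perp=\Delta\) applied on the nonzero modes.

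\textbf{Bound \eqref{eq:E-bookkeeping}.} The heat factor \(e^{t\Delta}\) is convolution with the periodized Gaussian, which is positive and has unit mass. It therefore commutes with \(\Delta_j\) and contracts \(L^\infty\), so \(\|\Delta_jE_tf\|_\infty=e^{-at}\|e^{t\Delta}\Delta_jf\|_\infty\le e^{-at}\|\Delta_jf\|_\infty\). This holds with constant exactly one and needs no mean condition. Vector and tensor fields are treated componentwise.

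\textbf{Bound \eqref{eq:E-difference}.} This is the main step. Split \(I-E_t=(1-e^{-at})I+e^{-at}(I-e^{t\Delta})\). The first term is bounded by \(at\|f\|_{B^r}\le a\,t^{\rho/2}\|f\|_{B^{r+\rho}}\) for \(t\le1\) and \(\rho\le2\). For the second term, fix \(j\ge0\). The symbol \(1-e^{-t|\xi|^2}\), localized to \(|\xi|\sim2^j\), can be written as \(\min(1,t2^{2j})\) times a symbol whose localized kernel has bounded \(L^1\) norm uniformly in \(t\) and \(j\). When \(t2^{2j}\le1\), factor \(1-e^{-s}=s\,g(s)\) with \(g\) smooth and bounded. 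When \(t2^{2j}>1\), use \(\|e^{t\Delta}\Delta_j\|_{L^\infty\to L^\infty}\le1\) directly. Together these give
\[
 \|(I-e^{t\Delta})\Delta_jf\|_\infty\le C\min(1,t2^{2j})\|\Delta_jf\|_\infty\le C\,t^{\rho/2}2^{j\rho}\|\Delta_jf\|_\infty ,
\]
using \(\min(1,x)\le x^{\rho/2}\) for \(0<\rho\le2\). The low block \(j=-1\) consists of finitely many modes, each of size \(|1-e^{-t|k|^2}|\le Ct\le Ct^{\rho/2}\). Multiplying by \(2^{jr}\) and summing proves \eqref{eq:E-difference}.

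The only delicate point is the uniform-in-\((t,j)\) \(L^1\) kernel bound for the factored symbol. I would verify it by rescaling \(\xi=2^j\eta\): the symbol becomes \(\widetilde\varphi(\eta)(1-e^{-s|\eta|^2})/\min(1,s)\) with \(s=t2^{2j}\). This family is bounded in \(C^N\) of the fixed annulus uniformly in \(s>0\), so its inverse Fourier transforms are uniformly bounded in \(L^1\).
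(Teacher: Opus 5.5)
Your proposal is correct and follows essentially the paper's route: dyadic localization with the standard homogeneous-multiplier and localized heat-kernel bounds (which the paper cites from BCD11 rather than re-deriving), positivity of the heat kernel (the maximum principle) for the exact $e^{-at}$ contraction, and the symbol bound $\min(1,t2^{2j})\le t^{\rho/2}2^{\rho j}$ summed over blocks. The only differences are cosmetic: you split off the damping factor $1-e^{-at}$ instead of using $\min\{1,t(a+2^{2j})\}$ directly, and your step $\|f\|_{B^r}\le\|f\|_{B^{r+\rho}}$ should carry a harmless factor $2^{\rho}$ coming from the $j=-1$ weight.
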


These are the periodic versions of the standard multiplier and localized
heat-semigroup bounds; see \cite[Proposition~2.78 and
Section~2.1.2]{BCD11}.  The contraction in \eqref{eq:E-bookkeeping} also
follows directly from the periodic maximum principle.  The multiplier
bound \(\min\{1,t(a+2^{2j})\}\lesssim_{a,\rho}
t^{\rho/2}2^{\rho j}\), also for \(j=-1\), gives
\eqref{eq:E-difference} by summation.

The nonlinear estimates below use Bony's decomposition
\cite[Chapter~2]{BCD11}; for fractional Leibniz rules and
Kato--Ponce commutator estimates in the Sobolev setting,
see \cite{Li19}.
For \(\sigma>0\) and \(0<\delta<1\), we use
\begin{align}
 \|fg\|_{B^0}
 &\le C_\sigma\|f\|_{B^0}\|g\|_{B^\sigma},
 \label{ep:positive-multiplier}\\
 \|fg\|_{B^{-\delta}}
 &\le C_\delta\|f\|_\infty\|g\|_\infty
 \le C_\delta\|f\|_{B^0}\|g\|_{B^0},
 \label{ep:negative-product}\\
 \|[\mathcal R,v\cdot\nabla]f\|_{B^0}
 &\le C\|v\|_{B^1}\|f\|_{B^0},\qquad \Div v=0.
 \label{ep:Rcomm}
\end{align}
For \eqref{ep:positive-multiplier}, Bony's paraproducts are bounded
by the displayed norms. In the high--high remainder, a block of
frequency \(2^k\) can contribute to \(k+C\) lower output blocks;
the factor \((k+C)2^{-\sigma k}\) is bounded. This proves the
estimate without asserting that \(B^0\) is an algebra.
Equation~\eqref{ep:negative-product} follows from
\(L^\infty\hookrightarrow B^{-\delta}\).
For \eqref{ep:Rcomm}, the proof of
\eqref{eq:source-comm-detail} applies at index zero: the high--low
sum has the factor \(2^{\ell-k}\), and incompressibility gives
\(2^{j-k}\) in the high--high sum. Both are summable. The
low--high kernel commutator is controlled by
\(\|\nabla v\|_\infty\|f\|_{B^0}\).

\subsection{Time regularity and uniqueness for the continuous system}
\label{sec:continuous-estimates}

For \(0<\alpha\le1\), the exact-solution assumptions
\eqref{eq:endpoint-exact-class} and the stress equation give
\begin{equation}
 \partial_t\tau
 =\Delta\tau-a\tau+D(u)-Q(\nabla u,\tau)-u\cdot\nabla\tau
 \in L^1(0,T;\B\alpha).
 \label{eq:tau-time-derivative}
\end{equation}
Indeed, \(\Delta\tau\in L^1_TB^\alpha\), the positive-index product
law controls \(Q(\nabla u,\tau)\), and
\[
 \|u\cdot\nabla\tau\|_{L^1_TB^\alpha}
 \lesssim \|u\|_{L^\infty_TB^{1+\alpha}}
           \|\tau\|_{L^1_TB^{1+\alpha}}<\infty.
\]
Thus \(\tau\in W^{1,1}(0,T;B^\alpha)\). In
Section~\ref{sec:endpoint-rate-family}, this estimate controls the sum
of the stress increments over the time steps.

We will use the following uniqueness result to identify limits of the
numerical scheme.

\begin{lemma}[Uniqueness in the critical class]
\label{ep:uniqueness}
There is at most one solution with fixed initial data in the class
\[
 u\in C([0,T];B^1),\qquad
 \tau\in C([0,T];B^0)\cap L^1(0,T;B^2).
\]
\end{lemma}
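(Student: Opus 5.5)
We compare two solutions $(u_1,\tau_1)$ and $(u_2,\tau_2)$ with the same data by estimating their difference in the weaker pair $B^0\times B^{-1}$, one derivative below the critical class. The critical norms cannot be closed: the products $Q(\nabla u,\tau)$ and $u\cdot\nabla\tau$ are not bounded in $B^0$ uniformly, whereas the weaker pair leaves one derivative of room.

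Set $\delta u=u_1-u_2$ and $\delta\tau=\tau_1-\tau_2$. The means agree, since both equal $m_0$, so $\delta u$ has zero mean. The difference system is
\begin{align*}
 \partial_t\delta u+u_1\cdot\nabla\delta u+\delta u\cdot\nabla u_2+\nabla\delta p&=\Div\delta\tau,\\
 \partial_t\delta\tau-\Delta\delta\tau+a\delta\tau+u_1\cdot\nabla\delta\tau
 &=D(\delta u)-\delta u\cdot\nabla\tau_2-Q(\nabla\delta u,\tau_1)-Q(\nabla u_2,\delta\tau).
\end{align*}

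\emph{Velocity.} Apply $\Pa$ and use $B^0$ transport estimates for negative/zero index with a $B^1$ divergence-free advector. At index $0$ these carry a logarithmic loss, so it is cleaner to work in $B^{-\epsilon}$ for a small fixed $\epsilon\in(0,1)$, or in $B^0_{\infty,\infty}$ combined with a log-interpolation Osgood argument. For $\delta u\cdot\nabla u_2$, use $\Div\delta u=0$ to write it as $\Div(\delta u\otimes u_2)$, which lies in $B^{-1-\epsilon}$ up to $\|\delta u\|_{B^{-\epsilon}}\|u_2\|_{B^1}$. The force $\Div\delta\tau$ requires $\delta\tau\in L^1_tB^{1-\epsilon}$.

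\emph{Stress.} Use the parabolic $L^1$ smoothing: in $\widetilde L^\infty_tB^{-1-\epsilon}\cap\widetilde L^1_tB^{1-\epsilon}$, the heat equation gains two derivatives in $L^1$. The transport term $u_1\cdot\nabla\delta\tau=\Div(u_1\otimes\delta\tau)$ and the product terms are handled by negative-index products, of the type \eqref{ep:negative-product} and paraproduct bounds, with one factor in $L^\infty_tB^1$ or $L^1_tB^2$. The term $Q(\nabla\delta u,\tau_1)$, with $\nabla\delta u\in B^{-1-\epsilon}$, requires $\tau_1\in B^{1+\epsilon'}$, and this is available only in $L^1$ time via interpolation of $\tau\in L^\infty B^0\cap L^1B^2$. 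Thus the coefficient is $\|\tau_1(t)\|_{B^{2}}$, which is integrable. The term $Q(\nabla u_2,\delta\tau)$, with $\nabla u_2\in B^0$ and $\delta\tau\in B^{1-\epsilon}$ in $L^1$, is fine by \eqref{ep:positive-multiplier}-type bounds. The linear term $D(\delta u)$ lies in $B^{-1-\epsilon}$, and the heat flow maps it to $L^1B^{1-\epsilon}$ with a factor $t$ on short intervals.

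\emph{Closing.} Combine into $X(t)=\|\delta u\|_{L^\infty_tB^{-\epsilon}}+\|\delta\tau\|_{\widetilde L^\infty_tB^{-1-\epsilon}\cap\widetilde L^1_tB^{1-\epsilon}}$. The target is $X(t)\le C\int_0^t w(s)X(s)\,ds+\eta(t)X(t)$ with $w\in L^1$ built from $\|u_i\|_{B^1}$ and $\|\tau_i\|_{B^2}$, and $\eta(t)\to0$ as $t\to0$ by absolute continuity of the $L^1$ norms. Absorbing the $\eta(t)X(t)$ term and applying Gronwall gives $X\equiv0$ on a short interval. Since all the bounds depend only on the fixed norms, iterating over finitely many intervals covers $[0,T]$.

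\emph{Main obstacle.} The main obstacle is the critical product $Q(\nabla\delta u,\tau_1)$, which pairs a $B^{-1-\epsilon}$ factor with a merely $L^1_tB^2$ one, together with the limiting transport estimate. If the $\epsilon$-shift fails because of the $B^1$ advector, which is not Lipschitz, I would fall back on $B^0_{\infty,\infty}$-type estimates with the standard logarithmic Osgood uniqueness argument: this uses $\|\cdot\|_{B^0_{\infty,1}}\lesssim\|\cdot\|_{B^0_{\infty,\infty}}\log(e+\|\cdot\|_{B^{\epsilon}}/\|\cdot\|_{B^0_{\infty,\infty}})$ with the a priori bounds.
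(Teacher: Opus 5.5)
Your framework (velocity in $B^{-\epsilon}$, stress in $B^{-1-\epsilon}$ with parabolic gain) can be made to work. As written, though, it fails at the one genuinely critical term, and it misidentifies the obstacle.

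In the velocity equation you bound $\delta u\cdot\nabla u_2=\Div(\delta u\otimes u_2)$ only in $B^{-1-\epsilon}$. A transport estimate at index $-\epsilon$ needs this forcing in $L^1_tB^{-\epsilon}$, so your bound is one derivative short. A product estimate that ignores incompressibility does not rescue it. The high--high remainder $\sum_k\Delta_k\delta u\,\widetilde\Delta_k\nabla u_2$ pairs indices $-\epsilon$ and $0$, and is bounded only through $\sup_k\|\Delta_k\delta u\|_\infty$, which $X$ does not control. Interpolating against the a priori $B^1$ bound gives only a sublinear power of $X$, which is useless for Gronwall or Osgood. The missing idea is to use $\Div\delta u=0$ in the remainder only:
\[
 \delta u\cdot\nabla u_2=T_{\delta u}\cdot\nabla u_2+T_{\nabla u_2}\delta u+\sum_i\partial_iR(\delta u_i,u_2).
\]
The paraproducts are harmless at negative index, and $R(\delta u_i,u_2)\in B^{1-\epsilon}$ because $-\epsilon+1>0$. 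Hence $\|\delta u\cdot\nabla u_2\|_{B^{-\epsilon}}\lesssim\|\delta u\|_{B^{-\epsilon}}\|u_2\|_{B^1}$.

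The same bound is needed for the pressure, which you do not address after applying $\Pa$: $\Pa(u_1\cdot\nabla\delta u)$ is not a transport term. The fix is the identity $(I-\Pa)(u_1\cdot\nabla\delta u)=(I-\Pa)(\delta u\cdot\nabla u_1)$.

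A second slip is bounding $Q(\nabla u_2,\delta\tau)$ by $\|u_2\|_{B^1}\|\delta\tau\|_{B^{1-\epsilon}}$. That pits the top dissipative norm against an $O(1)$ coefficient, which can neither be absorbed nor written as $\int wX$. Since the target space is $B^{-1-\epsilon}$, the remainder needs only $\delta\tau\in B^{\epsilon'}$, and interpolation with Young then absorbs the term. With these repairs ordinary Gronwall closes and no Osgood argument is needed.

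Your diagnosis is also off in three places. The $B^0$ transport estimate with a divergence-free $B^1$ advector has no logarithmic loss; Lemma~\ref{lem:transport-stability} covers $-1<r\le1$. $B^1_{\infty,1}$ velocities are Lipschitz, since $\nabla u\in B^0\subset L^\infty$. And the term $Q(\nabla\delta u,\tau_1)$ that you call the main obstacle is harmless, because $\tau_1\in L^1_tB^2$.

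That Lipschitz embedding is exactly what the paper exploits, and it makes Besov difference estimates unnecessary. On the torus both solutions are bounded and Lipschitz in space, and $\nabla\tau\in L^1_tL^\infty$. The paper therefore runs a plain $L^2$ energy estimate for $w=v-u$ and $Z=z-\tau$:
\begin{itemize}
\item transport terms vanish by incompressibility;
\item $\langle\Div Z,w\rangle+\langle D(w),Z\rangle=0$ by symmetry of $Z$;
\item the only term with an uncontrolled derivative, $\langle Q(\nabla w,\tau),Z\rangle$, is integrated by parts onto $\tau Z$ and absorbed by the diffusion $\|\nabla Z\|_2^2$.
\end{itemize}
Gronwall then applies with the integrable coefficient $1+\|\nabla u\|_\infty+\|\nabla v\|_\infty+\|\tau\|_\infty^2+\|\nabla\tau\|_\infty$, and the energy identity is justified by spatial regularization. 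Your repaired route would give a stronger stability estimate in $B^{-\epsilon}\times B^{-1-\epsilon}$. For uniqueness alone, the energy argument is much shorter and avoids all paraproduct bookkeeping.
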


\begin{proof}
For two such solutions \((u,\tau)\) and \((v,z)\), set
\(w=v-u\), \(Z=z-\tau\). Their difference equations are
\begin{align*}
 \partial_tw+v\cdot\nabla w+w\cdot\nabla u+\nabla q
 &=\operatorname{div}Z,\qquad \operatorname{div}w=0,\\
 \partial_tZ+v\cdot\nabla Z+aZ
       +Q(\nabla v,Z)+Q(\nabla w,\tau)
 &=D(w)+\Delta Z-w\cdot\nabla\tau.
\end{align*}
Pair with \(w\) and \(Z\) in \(L^2\). Symmetry of \(Z\) gives
\[
 \langle\operatorname{div}Z,w\rangle+\langle D(w),Z\rangle=0.
\]
The only term with an uncontrolled derivative of \(w\) is handled by
componentwise integration by parts:
\begin{equation}
 |\langle Q(\nabla w,\tau),Z\rangle|
 \le C_b\|\tau\|_\infty\|w\|_2\|\nabla Z\|_2
      +C_b\|\nabla\tau\|_\infty\|w\|_2\|Z\|_2.
 \label{ep:uniqueness-Q}
\end{equation}
Young's inequality and the other direct \(L^2\) bounds yield
\begin{equation}
 \begin{aligned}
 \frac{d}{dt}\bigl(\|w\|_2^2+\|Z\|_2^2\bigr)+\|\nabla Z\|_2^2
 &\le C_b\bigl(1+\|\nabla u\|_\infty+\|\nabla v\|_\infty
          +\|\tau\|_\infty^2+\|\nabla\tau\|_\infty\bigr)\\
 &\hspace{18mm}\times\bigl(\|w\|_2^2+\|Z\|_2^2\bigr).
 \end{aligned}
 \label{ep:uniqueness-energy}
\end{equation}
The coefficient is integrable by the assumed uniform bounds and
\(\tau\in L^1(0,T;B^1)\). To justify the energy calculation, first
regularize the solutions in space. The equations give
\(u_t,v_t,\tau_t,z_t\in L^1_tL^2_x\), while
\[
 \int_0^T\|\nabla\tau\|_2^2\,dt
 \le\sup_t\|\tau(t)\|_2\int_0^T\|\Delta\tau(t)\|_2\,dt<\infty,
\]
and likewise for \(z\). Thus the regularized energy identity passes
to the limit. Gronwall's inequality with zero initial differences
proves uniqueness.
\end{proof}

\subsection{Transport estimates}

Each sequential update requires inversion of \(I+hv\cdot\nabla\).
The estimate below gives both solvability and control by the velocity
norms that remain available in the subsequent error analysis.

\begin{lemma}[Discrete transport stability]
\label{lem:transport-stability}
Let \(v\in B^{\max\{1,r\}}\) be divergence free and
\(-1<r\le2\). There are \(C_r>0\) and \(c_0(r)=(2C_r)^{-1}\) such that, if
\(h\|v\|_{B^{\max\{1,r\}}}\le c_0(r)\), every
\(g\in B^r\) has a unique solution \(f\in B^r\) of
\(f+hv\cdot\nabla f=g\), and
\begin{equation}
 \|f\|_{B^r}
 \le \frac{\|g\|_{B^r}}{1-C_rh\|v\|_{B^{\max\{1,r\}}}}
 \le \bigl(1+2C_rh\|v\|_{B^{\max\{1,r\}}}\bigr)\|g\|_{B^r}.
 \label{eq:transport-stability}
\end{equation}
In particular, for \(-1<r\le1\), only the \(B^1\) norm of the
velocity enters the estimate.
\end{lemma}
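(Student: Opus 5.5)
The argument has two parts: an a priori commutator estimate for the operator \(I+hv\cdot\nabla\) in \(B^r\), and existence/uniqueness by a Neumann-series or continuity argument. For the a priori bound, let \(f\in B^r\) solve \(f+hv\cdot\nabla f=g\) (first assuming \(f\) smooth, e.g. from a Galerkin/Fourier truncation). Apply \(\Delta_j\) and write
\[
 \Delta_jf+h\,S_{j-1}v\cdot\nabla\Delta_jf=\Delta_jg+hR_j,\qquad
 R_j=S_{j-1}v\cdot\nabla\Delta_jf-\Delta_j(v\cdot\nabla f).
\]
The left operator is a stationary transport with divergence-free smooth coefficient. At a maximum point of \(|\Delta_jf|\) (componentwise, or for \(|\Delta_jf|^2\)) the gradient term vanishes, so \(\|\Delta_jf\|_\infty\le\|\Delta_jg\|_\infty+h\|R_j\|_\infty\). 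This is the periodic maximum principle already used in the scheme description.

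The key step is the standard commutator estimate of \cite[Lemma~2.100]{BCD11} type:
\[
 \sum_j2^{jr}\|R_j\|_\infty\le C_r\|v\|_{B^{\max\{1,r\}}}\|f\|_{B^r},\qquad -1<r\le2,
\]
with only \(\|\nabla v\|_{L^\infty}\lesssim\|v\|_{B^1}\) needed when \(r\le1\). I would split \(R_j\) by Bony's decomposition:
\begin{itemize}
 \item The low--high commutator \([\Delta_j,S_{k-1}v\cdot\nabla]\Delta_kf\) together with the difference \((S_{j-1}-S_{k-1})v\) contributes \(\|\nabla v\|_\infty\|f\|_{B^r}\).
 \item The high--low term \(\Delta_j(\Delta_kv\cdot\nabla S_{k-1}f)\) with \(k\sim j\) gives \(2^{jr}\|\Delta_jv\|_\infty\sum_{\ell\le j}2^\ell\|\Delta_\ell f\|_\infty\). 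For \(r<1\) this is bounded by \(\|v\|_{B^1}\|f\|_{B^r}\) via the factor \(2^{(\ell-j)(1-r)}\). For \(1\le r\le2\) I would instead bound it by \(\|v\|_{B^r}\|\nabla f\|_{L^\infty}\) when \(r>1\), or by \(\|v\|_{B^r}\|f\|_{B^1}\), which is acceptable since \(\|f\|_{B^1}\le\|f\|_{B^r}\). At \(r=1\) I would use \(\sum_{\ell}2^\ell\|\Delta_\ell f\|\le\|f\|_{B^1}\) directly.
 \item The high--high remainder: divergence-freeness lets me write \(v\cdot\nabla f=\Div(v\otimes f)\). This gives \(2^{j(1+r)}\sum_{k\ge j-3}\|\Delta_kv\|_\infty\|\widetilde\Delta_kf\|_\infty\), which sums because \(1+r>0\). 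This is exactly where \(r>-1\) is needed.
\end{itemize}
Summing then gives \(\|f\|_{B^r}\le\|g\|_{B^r}+C_rh\|v\|\,\|f\|_{B^r}\), hence the first inequality in \eqref{eq:transport-stability}. The second follows from \(1/(1-x)\le1+2x\) for \(x\le1/2\), which is guaranteed by \(h\|v\|\le c_0=(2C_r)^{-1}\).

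Existence and uniqueness come next. Uniqueness follows from the a priori bound applied to the difference of two solutions with \(g=0\), provided the bound is justified for rough \(f\). For this I would regularize: the maximum-principle argument is valid whenever \(\Delta_jf\) is smooth, which always holds on the torus since blocks are trigonometric polynomials. The only delicacy is that \(S_{j-1}v\cdot\nabla\Delta_jf\) must vanish at the maximum, which requires \(\Delta_jf\in C^1\); this is automatic. So the estimate holds for every \(f\in B^r\) solving the equation in distributions, and uniqueness is immediate. For existence, I would solve the Fourier-truncated problem \(f_N+hP_N(P_Nv\cdot\nabla f_N)=P_Ng\), a finite-dimensional system. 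It is injective by the same \(L^2\) or energy argument, since \(\langle v\cdot\nabla f,f\rangle=0\), and is therefore solvable. I would then repeat the estimate uniformly in \(N\), with extra commutators involving \(P_N\) that are harmless for smooth cutoffs, and pass to a weak-\(*\) limit. Weak-\(*\) limits preserve the \(B^r\) bound by Fatou in the dyadic sums, and the equation passes to the limit because \(v\cdot\nabla f_N\to v\cdot\nabla f\) in distributions.

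The main obstacle is to make the commutator bound uniform on the full range \(-1<r\le2\) while using only \(\|v\|_{B^1}\) for \(r\le1\). At \(r=1\) in \(B^1_{\infty,1}\), the high--low piece must be handled carefully to avoid a logarithmic loss, and I would use the \(\ell^1\) structure there rather than \(L^\infty\) bounds on \(\nabla f\). If the truncated limiting argument proves awkward, I would instead obtain existence by the continuity method in \(h\): the set of \(h\) for which the operator is onto \(B^r\) is open by Neumann series and closed by the uniform a priori bound, and it contains \(h=0\).
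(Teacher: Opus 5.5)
Your a priori bound and uniqueness argument are essentially the paper's: a blockwise stationary maximum principle combined with the Bony-decomposition commutator estimate of Bahouri--Chemin--Danchin (Lemma~2.100), where \(r>-1\) enters only through the divergence-form high--high remainder and \(\ell^1\) summability handles \(r=1\). Localizing with \(S_{j-1}v\) instead of \(v\) is a harmless variant. Since each \(\Delta_jf\) is a trigonometric polynomial, the absorption is legitimate for every distributional solution \(f\in B^r\).

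The genuine gap is existence; neither of your two routes works as stated. First, the truncated problem \(f_N+hP_N(P_Nv\cdot\nabla f_N)=P_Ng\) is injective by your \(L^2\) identity only when \(P_N\) is an orthogonal (sharp) projection. Sharp truncations, however, are not uniformly bounded on \(L^\infty\)-based spaces. With a smooth cutoff the extra terms are not harmless. The operator \(I+hP_N(w\cdot\nabla)\), \(w=P_Nv\), has no maximum principle, so to restore one you must write \(\Delta_jf_N+hS_{j-1}w\cdot\nabla\Delta_jf_N=P_N\Delta_jg+hP_NR_j+h(I-P_N)(S_{j-1}w\cdot\nabla\Delta_jf_N)\), with \(R_j\) formed with \(w\). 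On the blocks with \(2^j\sim N\), the last term has no commutator structure and is only bounded by \(Ch2^j\|w\|_\infty\|\Delta_jf_N\|_\infty\). It therefore contributes up to \(ChN\|w\|_\infty\|f_N\|_{B^r}\), which cannot be absorbed uniformly in \(N\). This is precisely the difficulty the paper leaves open for projected transport resolvents (Remark~\ref{rem:projected-transport-solves} and the conclusion).

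Second, your fallback continuity argument fails at its base point. The operator \(hv\cdot\nabla\) is unbounded on \(B^r\), so there is no Neumann series around \(h=0\). Openness does hold around any \(h_0>0\), since \(hv\cdot\nabla=(h/h_0)\bigl((I+h_0v\cdot\nabla)-I\bigr)\) is bounded relative to an invertible operator. But the admissible radius is of order \(h_0/(1+K)\), with \(K\) the uniform inverse bound, and it collapses as \(h_0\to0\). Your a priori estimate is only a dissipativity bound; as in the Lumer--Phillips theorem, surjectivity must be established for at least one \(h_0>0\) by a separate construction.

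The paper supplies this with the Lipschitz flow \(\Phi_t\) of \(v\). The time-dependent transport estimate \(\|g\circ\Phi_{-t}\|_{B^r}\le e^{C_rt\|v\|_{B^{\max\{1,r\}}}}\|g\|_{B^r}\) makes \(\int_0^\infty e^{-s}g\circ\Phi_{-hs}\,ds\) converge in \(B^r\) under the smallness condition on \(h\|v\|_{B^{\max\{1,r\}}}\). This integral solves the equation for smooth \(g\), and density of smooth dyadic truncations in \(B^r\) extends the inverse. Alternatively, your a priori bound would close any approximation that keeps a genuine maximum principle, followed by compactness and passage to the limit. Two such approximations are mollifying \(v\) and solving by characteristics, or adding \(-\varepsilon\Delta\), which has the favorable sign at extrema of each block.
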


\begin{proof}
We use the dyadic transport commutator estimates of
\cite[Lemma~2.100]{BCD11}.
For divergence-free \(v\) and \(-1<r\le1\), they give
\begin{equation}
 \sum_{j\ge-1}2^{rj}\|[\Delta_j,v\cdot\nabla]f\|_\infty
 \le C_r\|v\|_{B^1}\|f\|_{B^r}.
 \label{eq:critical-tame-transport}
\end{equation}
To explain why \(r=1\) is included and \(r=-1\) is excluded, put
\(V_k=2^k\|\Delta_kv\|_\infty\),
\(F_k=2^{rk}\|\Delta_kf\|_\infty\), and
\(F_k^*=\sum_{|\ell-k|\le N}F_\ell\) for a fixed overlap width \(N\).
In Bony's decomposition, the high--low contribution is bounded by
\[
 C\sum_k\sum_{\ell\le k-2}
 V_kF_\ell\,2^{-(1-r)(k-\ell)}
 \le C\|V\|_{\ell^1}\|F\|_{\ell^1}.
\]
This includes \(r=1\), since both sequences are summable.
Incompressibility puts the derivative of the high--high remainder
on the output block, whose contribution is bounded by
\[
 C\sum_kV_kF_k^*
       \sum_{-1\le j\le k+N}2^{(1+r)(j-k)}
 \le C_r\|V\|_{\ell^1}\|F\|_{\ell^1},
 \qquad r>-1.
\]
The low--high kernel commutator is bounded by
\(C\|\nabla v\|_\infty\|f\|_{B^r}\); the remaining localized
terms and the lowest blocks obey the same bound as above.
For \(1<r\le2\), the usual tame commutator estimate instead gives
\[
 \sum_{j\ge-1}2^{rj}\|[\Delta_j,v\cdot\nabla]f\|_\infty
 \le C_r\bigl(\|\nabla v\|_\infty\|f\|_{B^r}
             +\|v\|_{B^r}\|\nabla f\|_\infty\bigr)
 \le C_r\|v\|_{B^r}\|f\|_{B^r}.
\]
Thus, in both ranges, the stationary maximum principle on each
dyadic block yields
\[
 \|f\|_{B^r}\le\|g\|_{B^r}+C_rh\|v\|_{B^{\max\{1,r\}}}\|f\|_{B^r}.
\]
Absorption proves \eqref{eq:transport-stability} and uniqueness.

For existence, let \(\Phi_t\) be the Lipschitz flow of \(v\).
The same commutator bounds for the time-dependent transport equation
give
\(\|g\circ\Phi_{-t}\|_{B^r}
\le e^{C_rt\|v\|_{B^{\max\{1,r\}}}}\|g\|_{B^r}\). Hence, for smooth \(g\),
\[
 (I+hv\cdot\nabla)^{-1}g
 =\int_0^\infty e^{-s}g\circ\Phi_{-hs}\,ds
\]
is well defined in \(B^r\) and solves the stationary equation by
integration by parts. Smooth dyadic truncations are dense in \(B^r\),
because the Besov summability index is one. The bound therefore
extends the inverse to every \(g\in B^r\), including \(r<0\),
without interpreting rough data pointwise.
\end{proof}

\subsection{Uniform discrete stability}
\label{sec:uniform-discrete-stability}

Combining the transport inverse with the positive-index source bounds
constructs a numerical orbit for a short interval. This provides the
starting point for the quantitative continuation argument in
Section~\ref{sec:exact-interval-continuation}.

\begin{proposition}[Uniform local stability of the discrete orbit]
\label{prop:discrete-high}
Let $0<\alpha\le1$ and let
$\tau^0,\Gamma^0\in B^\alpha$ and \(m_0\in\mathbb R^2\), with
\(\langle\Gamma^0\rangle=0\).
Write \(c_0=c_0(\alpha)\) for the threshold in
Lemma~\ref{lem:transport-stability} at \(r=\alpha\).
There exist \(T_*,h_*,M_*>0\), depending only on \(\alpha\), the
parameters \(a,b\), and the initial norm
\[
 |m_0|+\|\tau^0\|_{B^\alpha}+\|\Gamma^0\|_{B^\alpha},
\]
such that, for \(0<h\le h_*\), the scheme
\eqref{eq:recover-discrete}--\eqref{eq:scheme-gamma} has a unique solution for
$nh\le T_*$ and
\begin{equation}
  \sup_{nh\le T_*}
  \left(
    \norm{u^n}{\B{1+\alpha}}
    +\norm{\tau^n}{\B\alpha}
    +\norm{\Gamma^n}{\B\alpha}
  \right)
  \le M_*,
  \qquad
  h\sup_{nh\le T_*}\norm{u^n}{\B{1+\alpha}}\le c_0.
  \label{eq:discrete-high-bound}
\end{equation}
\end{proposition}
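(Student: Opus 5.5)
We prove Proposition~\ref{prop:discrete-high} by induction on \(n\), using a bootstrap bound of the form
\[
 \|\tau^n\|_{B^\alpha}+\|\Gamma^n\|_{B^\alpha}\le 2A_0,
 \qquad A_0:=|m_0|+\|\tau^0\|_{B^\alpha}+\|\Gamma^0\|_{B^\alpha},
\]
and show that it persists for \(nh\le T_*\) with \(T_*\) small.

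\textbf{Step 1: velocity bound and transport threshold.} From the recovery formula \eqref{eq:recover-discrete} and Lemma~\ref{lem:multiplier-bookkeeping} we get
\[
 \|u^n\|_{B^{1+\alpha}}\le |m_0|+C\bigl(\|\Gamma^n\|_{B^\alpha}+\|\tau^n\|_{B^\alpha}\bigr)\le C_1A_0=:M_u .
\]
Since \(\alpha\le1\), we have \(\max\{1,\alpha\}=1\le1+\alpha\). Choosing \(h_*\le c_0/M_u\) therefore gives the second claim in \eqref{eq:discrete-high-bound}. It also activates Lemma~\ref{lem:transport-stability} with \(r=\alpha\), which yields unique solvability of both transport solves in \(B^\alpha\) together with the amplification factor \(1+2C_\alpha hM_u\).

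\textbf{Step 2: stress step.} The heat factor is a contraction by \eqref{eq:E-bookkeeping}. The source \(S^n=D(u^n)-Q(\nabla u^n,\tau^n)\) is bounded in \(B^\alpha\) by the positive-index product law, as in \eqref{eq:source-Q-detail}:
\[
 \|S^n\|_{B^\alpha}\le C(1+\|\tau^n\|_{B^\alpha})\|u^n\|_{B^{1+\alpha}}.
\]
Applying the transport estimate to \eqref{eq:scheme-tau-good} then gives
\[
 \|\tau^{n+1}\|_{B^\alpha}\le(1+ChM_u)\bigl(\|\tau^n\|_{B^\alpha}+ChM_u(1+\|\tau^n\|_{B^\alpha})\bigr)\le \|\tau^n\|_{B^\alpha}+hF(A_0)
\]
for an increasing function \(F\).

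\textbf{Step 3: good-vorticity step.} We use \(G^n=G(u^n,\tau^n)+J^n\). The first part is bounded by \eqref{eq:gamma-source}. The correction satisfies
\[
 \|J^n\|_{B^\alpha}\lesssim \|\nabla u^n\|_{B^\alpha}\bigl(\|\tau^{n+1}\|_{B^\alpha}+\|\tau^n\|_{B^\alpha}\bigr),
\]
by boundedness of \(\mathcal R\) on \(B^\alpha\) and the product law, where \(\|\tau^{n+1}\|_{B^\alpha}\) is already controlled by Step~2. Hence \(\|G^n\|_{B^\alpha}\le F(A_0)\), and the transport estimate applied to \eqref{eq:scheme-gamma} gives
\[
 \|\Gamma^{n+1}\|_{B^\alpha}\le \|\Gamma^n\|_{B^\alpha}+hF(A_0).
\]
As noted after \eqref{eq:source-delay}, the mean of \(\Gamma^{n+1}\) stays zero, so the recovery is legitimate at the next step.

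\textbf{Step 4: closing the induction.} Summing Steps 2 and 3 over \(n\) gives
\[
 \|\tau^n\|_{B^\alpha}+\|\Gamma^n\|_{B^\alpha}\le A_0+2nhF(A_0)\le 2A_0
 \qquad\text{whenever } nh\le T_*:=A_0/(2F(A_0)).
\]
If \(A_0=0\) the orbit is identically zero up to the constant mean, which is trivial, or one may replace \(A_0\) by \(\max(A_0,1)\). This closes the bootstrap, and we set \(M_*=(C_1+1)2A_0\). Uniqueness follows at each step from the uniqueness part of Lemma~\ref{lem:transport-stability}.

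\textbf{Main obstacle.} The delicate point is ensuring that the constants genuinely depend only on \(A_0\). The step threshold must be fixed in terms of the bootstrap bound \(M_u\) before the induction starts, which avoids circularity. One also has to check that the amplification factor \((1+ChM_u)\) compounds only to \(e^{CT_*M_u}\), which is harmless here because the sum in Step~4 is linear in \(nh\).
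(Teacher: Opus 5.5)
Your proposal is correct and follows essentially the paper's argument: a bootstrap on \(|m_0|+\|\tau^n\|_{B^\alpha}+\|\Gamma^n\|_{B^\alpha}\) with an additive per-step growth bound, a step threshold fixed from the transport condition at the bootstrap level, and \(T_*\) chosen so the accumulated growth stays below that level (the paper uses \(R=2X_0+1\) and \(X_{n+1}\le X_n+C_ah(1+X_n)^3\)). The only cosmetic difference is that you bound \(G^n\) through the split \(G(u^n,\tau^n)+J^n\) rather than estimating \(\mathcal RQ(\nabla u^n,\tau^{n+1})\) directly, which is harmless since \(\alpha>0\).
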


\begin{proof}
Set
\[
  X_n=|m_0|+\norm{\tau^n}{\B\alpha}+\norm{\Gamma^n}{\B\alpha}.
\]
The reconstruction formula and
Lemma~\ref{lem:multiplier-bookkeeping} give
\begin{equation}
 \norm{u^n}{\B{1+\alpha}}
 \le C\left(|m_0|+\norm{\Gamma^n}{\B\alpha}
            +\norm{\tau^n}{\B\alpha}\right)
 =CX_n.
 \label{eq:discrete-recovery-high}
\end{equation}
The positive-index algebra law first gives
\(\|S^n\|_{B^\alpha}\le C_a(X_n+X_n^2)\).
Whenever \(ChX_n\le c_0\), the stress resolvent and heat contraction give
\(\|\tau^{n+1}\|_{B^\alpha}\le C X_n\). The new constitutive term
therefore obeys the same source bound:
\begin{equation}
 \|S^n\|_{B^\alpha}+\|G^n\|_{B^\alpha}
 \le C_a(X_n+X_n^2).
 \label{eq:discrete-source-high}
\end{equation}
Apply the transport bound next to the good-vorticity equation.
Adding the two estimates and the conserved mean, for \(h\le1\), gives
\[
  X_{n+1}
  \le (1+ChX_n)\bigl(X_n+C_a h(X_n+X_n^2)\bigr)
  \le X_n+C_a h(1+X_n)^3.
\]

Choose \(h_*\le1\) and
\[
 R=2X_0+1,\qquad
 C_aT_*(1+R)^3\le R-X_0,\qquad
 Ch_*R\le c_0.
\]
If \(X_k\le R\) for \(k\le n\) and \((n+1)h\le T_*\), the step condition
holds and the scalar recurrence gives
\[
 X_{n+1}
 \le X_0+C_ah\sum_{k=0}^n(1+X_k)^3
 \le X_0+C_a(n+1)h(1+R)^3
 \le R.
\]
Induction, \eqref{eq:discrete-recovery-high}, and resolvent uniqueness
complete the proof with \(M_*=(1+C)R\).
\end{proof}
\section{Critical Convergence Without a Rate}
\label{sec:critical-convergence}

To prove Theorem~\ref{thm:critical-convergence}, we combine the
stress smoothing estimates in Section~\ref{ep:sec-smoothing} with the
uniform frequency tails in Section~\ref{ep:frequency-tails}.
On intervals where the numerical iterates satisfy a common critical-norm
bound, Proposition~\ref{ep:weak-compactness} gives strong compactness in
those norms. Passing to the limit in the discrete equations and using
uniqueness identifies each limit as the prescribed exact solution. The resulting conditional convergence,
together with the one-step bound, permits a first-exit argument that
continues the numerical orbit through $T$.

\subsection{Construction of the critical sequential step}

Throughout this section, a numerical prefix consists of the iterates
$0\le n\le J$ constructed from the fixed initial data, with $Jh\le T$.
Assume
\begin{equation}
 X_n:=|m_0|+\|\tau^n\|_{B^0}+\|\Gamma^n\|_{B^0}\le R,
 \qquad 0\le n\le J.
 \label{ep:prefix}
\end{equation}
Recovery gives \(\|u^n\|_{B^1}\le CR\). All conditional estimates
below are uniform in \(h,J\); their constants may depend on
\(R,T,a,b\) and the indicated regularity indices.

We first construct one further node. Its norm-growth bound will allow
the continuation argument to include a possible first exit node in the
conditional estimates.

\begin{lemma}[A critical step and a bound for its growth]
\label{ep:one-step}
If \(X_n\le R\) and \(h\) is sufficiently small in terms of \(R\),
the sequential stress and good-vorticity equations have unique
solutions with
\(\tau^{n+1}\in B^1\) and \(\Gamma^{n+1}\in B^0\).
Moreover, for \(0<h\le1\),
\begin{equation}
 X_{n+1}\le X_n+C_R\sqrt h .
 \label{ep:overshoot}
\end{equation}
This is a bound for norm growth, not a uniform critical-norm estimate
for the difference between successive good vorticities.
\end{lemma}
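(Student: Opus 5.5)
The argument splits into three steps: the stress solve (which gains one derivative from the heat factor), the constitutive product in $G^n$ (controlled using that gain), and the growth bound (from the $\sqrt h$ size of short-time heat smoothing).

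\emph{Step 1: stress solve.} Since $X_n\le R$, recovery gives $\|u^n\|_{B^1}\le CR$.
\begin{itemize}
\item The product estimate \eqref{ep:negative-product} puts $Q(\nabla u^n,\tau^n)$ in $L^\infty\subset B^{-\delta}$, and $D(u^n)\in B^0$, so $S^n\in B^{-\delta}$ with norm $\lesssim C_R$.
\item The localized heat bound $\|E_hf\|_{B^{s+\rho}}\lesssim h^{-\rho/2}\|f\|_{B^s}$ (summing $e^{-h2^{2j}}2^{\rho j}$ over blocks) gives $\|hE_hS^n\|_{B^1}\lesssim h^{(1-\delta)/2}C_R$. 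Combined with $\|E_h\tau^n\|_{B^1}\lesssim h^{-1/2}R$, the right-hand side of \eqref{eq:scheme-tau-good} lies in $B^1$.
\item Lemma~\ref{lem:transport-stability} with $r=1$ (only $\|u^n\|_{B^1}$ enters) gives a unique $\tau^{n+1}\in B^1$ once $hCR\le c_0(1)$. For $h\le 1$ we get $\|\tau^{n+1}\|_{B^1}\lesssim C_R h^{-1/2}$.
\item Applying the same lemma at $r=0$ gives $\|\tau^{n+1}\|_{B^0}\le(1+Ch)(\|\tau^n\|_{B^0}+h\|E_hS^n\|_{B^0})$. To bound $h\|E_hS^n\|_{B^0}\lesssim h^{1-\delta/2}C_R$, use the $B^{-\delta}$ bound and smoothing of order $\delta$. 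This step gives $\|\tau^{n+1}\|_{B^0}\le\|\tau^n\|_{B^0}+C_R h^{1-\delta/2}$.
\end{itemize}

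\emph{Step 2: the good-vorticity source.} All terms of \eqref{eq:Gn} except the constitutive one are bounded in $B^0$ by $C_R$, using Lemma~\ref{lem:multiplier-bookkeeping} and \eqref{ep:Rcomm}.

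For $\mathcal RQ(\nabla u^n,\tau^{n+1})$, use \eqref{ep:positive-multiplier} with $\sigma=1$: $\nabla u^n\in B^0$ and $\tau^{n+1}\in B^1$. This gives $\|G^n\|_{B^0}\lesssim C_R(1+h^{-1/2})$. It is the point of the ordering: had the old stress been used, no such bound would hold.

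Lemma~\ref{lem:transport-stability} at $r=0$ then yields a unique $\Gamma^{n+1}\in B^0$ with
\[
\|\Gamma^{n+1}\|_{B^0}\le(1+Ch)\bigl(\|\Gamma^n\|_{B^0}+h\|G^n\|_{B^0}\bigr)\le\|\Gamma^n\|_{B^0}+C_R\sqrt h .
\]
Here we used $h\le1$ and absorbed $Ch\|\Gamma^n\|_{B^0}\le C_Rh$.

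\emph{Step 3: conclusion.} Adding the two bounds, with $\delta\le1$ so that $h^{1-\delta/2}\le\sqrt h$, gives \eqref{ep:overshoot}.

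The main obstacle is the sharp heat-smoothing constant for $\|E_h\tau^n\|_{B^1}$ from $B^0$ data. It needs $\sup_h\sum_j h^{1/2}2^je^{-h2^{2j}}<\infty$, which holds because the sum is geometric around $2^j\sim h^{-1/2}$. The low block $j=-1$ must also be handled, together with the damping $a$. The rest is bookkeeping of the transport thresholds.
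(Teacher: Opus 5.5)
Your proof is correct and follows essentially the same route as the paper: heat smoothing plus the transport inverse at indices one and zero for the stress solve, then the mixed product law \eqref{ep:positive-multiplier} applied with the new stress $\tau^{n+1}\in B^1$ to get $\|G^n\|_{B^0}\le C_Rh^{-1/2}$, and finally the $B^0$ transport inverse for $\Gamma^{n+1}$. The only difference is cosmetic. You route $S^n$ through $B^{-\delta}$ and use $B^{-\delta}\to B^{s}$ smoothing, whereas the paper applies the bound $\|E_hf\|_{B^1}\le Ch^{-1/2}\|f\|_\infty$ directly to $S^n\in L^\infty$; also, for $B^0$ data the blockwise supremum already suffices, so the summed condition you single out as the main obstacle is stronger than what you need.
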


\begin{proof}
The frozen source \(S^n\) belongs to \(L^\infty\), with norm at most
\(C_R\). Localized heat bounds give
\(\|E_hf\|_{B^1}\le Ch^{-1/2}\|f\|_\infty\).
Thus Lemma~\ref{lem:transport-stability} at indices zero and one
constructs the unique stress solve, with
\[
 \|\tau^{n+1}\|_{B^1}\le C_Rh^{-1/2},\qquad
 \|\tau^{n+1}\|_{B^0}
 \le\|\tau^n\|_{B^0}+C_R\sqrt h.
\]
The second estimate uses heat contraction on \(\tau^n\) and
\(h\|E_hS^n\|_{B^0}\le C_R\sqrt h\).
Crucially, the constitutive term in \(G^n\) uses this newly computed
stress. Hence \eqref{ep:positive-multiplier} and \eqref{ep:Rcomm} give
\[
 \|G^n\|_{B^0}
 \le C_R\bigl(1+\|\tau^{n+1}\|_{B^1}\bigr)
 \le C_Rh^{-1/2}.
\]
The \(B^0\) transport inverse then constructs \(\Gamma^{n+1}\) and
bounds its norm by \(\|\Gamma^n\|_{B^0}+C_R\sqrt h\).
The symmetry and mean identities from Section~\ref{sec:discrete-scheme}
remain valid. Adding these bounds proves \eqref{ep:overshoot}.
\end{proof}

\subsection{Discrete stress smoothing and variation}
\label{ep:sec-smoothing}

The one-step bound \(\|\tau^{n+1}\|_{B^1}\le C_Rh^{-1/2}\)
from Lemma~\ref{ep:one-step} does not give a uniform bound on
\(h\sum_n\|\tau^n\|_{B^1}\). We therefore estimate the accumulated
smoothing over several steps. This yields two bounds needed for the
critical limit: a time-summed \(B^1\) bound on \(\tau^n\), \(n\ge1\), for the
frequency-tail argument in Section~\ref{ep:frequency-tails}, and a
time-summed \(B^2\) bound on \(E_h\tau^n\) that recovers the regularity of
the limiting stress in Section~\ref{ep:compactness-section}.

The argument has two stages. Lemma~\ref{ep:product-smoothing} estimates
the repeated split evolution, and Proposition~\ref{ep:stress-smoothing}
applies it to the stress. Once this gives an integrable forcing in
\(B^0\), Lemma~\ref{ep:heat-maximal} applies to an exact heat
recurrence and yields Corollary~\ref{ep:stress-variation}.
Throughout, we assume \eqref{ep:prefix} and recall the stress step
\begin{equation}
 S^n=D(u^n)-Q(\nabla u^n,\tau^n),\qquad
 \tau^{n+1}=(I+hu^n\cdot\nabla)^{-1}E_h(\tau^n+hS^n).
 \label{ep:stress-step}
\end{equation}

\paragraph{Smoothing over several steps.}
The estimate below applies to any divergence-free velocity sequence
bounded in \(B^s\). It requires no temporal regularity or evolution
equation for that sequence, and it does not assume that the transport
resolvents commute with the heat operator.
\begin{lemma}[Heat--resolvent smoothing]
\label{ep:product-smoothing}
Let \(1\le s\le2\), \(0<\delta<1\), and let \(v^n\) be
divergence free with \(\sup_n\|v^n\|_{B^s}\le R\). Put
\[
 P_n=(I+hv^n\cdot\nabla)^{-1},\qquad
 \mathcal U(n,k)=P_{n-1}E_h\cdots P_kE_h\quad(n>k).
\]
For sufficiently small \(h\), depending only on \(R,s,\delta\),
uniformly for \(0<(n-k)h\le T\), one has
\begin{equation}
 \|\mathcal U(n,k)f\|_{B^s}
 \le C_{T,R,s,\delta}((n-k)h)^{-(s-q)/2}\|f\|_{B^q},
 \qquad q\in\{s-1,s-1-\delta\}.
 \label{ep:product-smoothing-bound}
\end{equation}
\end{lemma}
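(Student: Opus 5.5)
Our strategy is to compare the product \(\mathcal U(n,k)\) with the pure heat semigroup \(E_{(n-k)h}\), for which the bound is immediate. We then treat the transport resolvents as perturbations whose cumulative effect is controlled by a discrete Volterra (Gronwall-type) argument with an integrable singular kernel. Write \(m=n-k\) and \(P_j=I-hR_j\) with \(R_j=P_jv^j\cdot\nabla\). Lemma~\ref{lem:transport-stability} gives \(\|P_j\|_{B^r\to B^r}\le1+Ch\) for \(r\in[q,s]\subset(-1,2]\) once \(hR\) is small. It also gives \(\|hR_jg\|_{B^{r}}\le Ch\|v^j\|_{B^{\max\{1,r\}}}\|g\|_{B^{r+1}}\) for \(-1<r\le s-1\). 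For the last bound we use \(v\cdot\nabla g=\Div(v\otimes g)\) and the product estimates of Section~\ref{sec:functional-framework}. The point is that one derivative is lost on \(hR_j\), and this must be compensated by heat smoothing.

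\textbf{Step 1 (Duhamel expansion).} Telescope
\[
 \mathcal U(n,k)=E_{mh}-h\sum_{j=k}^{n-1}\mathcal U(n,j+1)\,R_j\,E_{(j-k+1)h},
\]
which follows from \(P_jE_h=E_h-hR_jE_h\) applied successively from the right. We then estimate in two stages.

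First we prove a uniform \(B^q\to B^q\) bound \(\|\mathcal U(n,k)\|\le e^{CT}\). This follows directly from the resolvent bound and heat contraction.

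Next we prove an intermediate smoothing bound at \(B^{r}\), \(r=q+\epsilon\), of the form \(\|\mathcal U(n,k)f\|_{B^r}\le C(mh)^{-\epsilon/2}\|f\|_{B^q}\). Insert the expansion and put the derivative loss of \(R_j\) onto the heat factor. This gives \(\|R_jE_{(j-k+1)h}f\|_{B^{r-1}}\lesssim ((j-k+1)h)^{-\epsilon/2}\|f\|_{B^q}\) whenever \(r-1\ge q-1+\ldots\). The remaining factor \(\mathcal U(n,j+1)\) then needs to smooth from \(B^{r-1}\) to \(B^r\), and this is supplied by the induction hypothesis on shorter products.

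\textbf{Step 2 (Volterra inequality).} Define \(A_m=\sup_{n-k=m}(mh)^{(s-q)/2}\|\mathcal U(n,k)\|_{B^q\to B^s}\). The expansion yields a discrete inequality of the form
\[
 (mh)^{-(s-q)/2}A_m\lesssim (mh)^{-(s-q)/2}+h\sum_{j}\bigl((n-j-1)h\bigr)^{-\beta_1}\bigl((j-k+1)h\bigr)^{-\beta_2}\cdots.
\]
The exponents \(\beta_1,\beta_2\) must be strictly below \(1\) so that the discrete beta sums are bounded by \(C(mh)^{1-\beta_1-\beta_2}\). We arrange this by splitting the sum at \(j\approx(n+k)/2\).

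On the half nearer \(k\), the heat factor carries the full \(s-q\) smoothing. The loss from \(R_j\) is then taken as a \(B^{s}\to B^{s-1}\) bound, and the transport product is used only in \(B^{s-1}\) or \(B^{s-1-\delta}\). Here the choice \(q\in\{s-1,s-1-\delta\}\) keeps all indices in \((-1,2]\) and keeps the velocity norm at \(B^{\max(1,r)}\le B^s\).

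On the half nearer \(n\), the product \(\mathcal U(n,j+1)\) supplies the smoothing with kernel \((mh)^{-(s-q)/2}A\). The perturbation then acquires a gain \(h\sum\lesssim (mh)^{1/2}\)-type smallness times the kernel. We close the estimate by a discrete singular Gronwall lemma of Henry type on \([0,T]\); smallness of \(h\) enters only through the resolvent threshold.

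\textbf{Main obstacle.} The delicate point is the term where \(R_j\) loses a full derivative exactly at the critical indices. When \(q=s-1\), the singularities combine as \(((n-j)h)^{-1/2}((j-k)h)^{-1/2}\)-type factors, whose sum is borderline but still bounded by a beta integral. In the case \(s=1\), however, \(R_j\) must act from \(B^{0}\) to \(B^{-1}\), and the commutator estimate \eqref{eq:critical-tame-transport} fails at \(r=-1\).

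Our first attempt at this endpoint is to never place \(R_j\) at index \(-1\). Instead we always let the adjacent heat factor absorb the derivative, writing \(\|hR_jE_{\ell h}g\|_{B^{0}}\lesssim h\,(\ell h)^{-1/2}\|g\|_{B^{0}}\), and we use the \(\delta\)-variant \(q=s-1-\delta\) to gain the extra room needed to sum. Products of two nonsmooth factors are then handled by the product law \eqref{ep:positive-multiplier}. This is why \(\delta\in(0,1)\) appears in the statement, and why the constant is allowed to depend on it.
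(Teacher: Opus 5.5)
Your route differs from the paper's, and it can be completed. As written, however, one term is not covered by your kernel, and your diagnosis of the obstacle and of the role of \(\delta\) is off.

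\textbf{Comparison of the two routes.} Your expansion \(\mathcal U(n,k)=E_{mh}-h\sum_{j=k}^{n-1}\mathcal U(n,j+1)R_jE_{(j-k+1)h}\) is correct. It puts the unknown propagator on the left, so the rough datum is always smoothed by heat first. You then close a Volterra inequality for the operator norms of all shorter sub-products, which produces a beta-type double kernel. The paper instead puts the heat semigroup on the left. After one step it writes
\[
 z_n=E_{(n-1)h}z_1-h\sum_{j=1}^{n-1}E_{(n-1-j)h}P_jA_jE_hz_j .
\]
Here the outer heat factor absorbs the derivative lost by \(A_j=v^j\cdot\nabla\), so only \(A_j:B^s\to B^{s-1}\) and \(P_j\) on \(B^{s-1}\) are needed. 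The Volterra inequality is then a scalar one for the single trajectory \(\|z_j\|_{B^s}\), with kernel \(((n-j)h)^{-1/2}\). Starting at the smoothed node \(z_1\) keeps corrections off the rough datum. The paper absorbs on a short interval \(T_0\) and extends by factorization; your discrete Henry lemma on \([0,T]\) would do the same job. Your version needs bounds uniform in the starting index \(k\). These are available because \(\sup_n\|v^n\|_{B^s}\le R\).

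\textbf{The missing term.} At \(j=n-1\) the outer factor is \(\mathcal U(n,n)=I\), which does not smooth. Your kernel \(((n-j-1)h)^{-\beta_1}\) is infinite there, and the induction hypothesis does not apply. You must estimate this term directly in \(B^s\), using two derivatives of heat smoothing, the algebra property of \(B^s\), and boundedness of \(P_{n-1}\) on \(B^s\):
\[
 h\|P_{n-1}(v^{n-1}\cdot\nabla)E_{mh}f\|_{B^s}
 \le Ch\|E_{mh}f\|_{B^{s+1}}
 \le Ch(mh)^{-(s+1-q)/2}\|f\|_{B^q}
 \le Ch^{1/2}(mh)^{-(s-q)/2}\|f\|_{B^q}.
\]
This is the counterpart of the paper's \(m=1\) bound \(\|A_nE_hg\|_{B^s}\le Ch^{-1/2}\|g\|_{B^s}\).

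For the remaining indices \(1\le n-j-1\le m-1\), proceed as follows. Bound \(R_jE_{(j-k+1)h}:B^q\to B^{s-1}\) with cost \(((j-k+1)h)^{-(s-q)/2}\). Bound \(\mathcal U(n,j+1):B^{s-1}\to B^s\) by the \(q=s-1\) estimate at shorter length. The sum \(h\sum((n-j-1)h)^{-1/2}((j-k+1)h)^{-(s-q)/2}\le C(mh)^{1/2-(s-q)/2}\) then leaves a factor \((mh)^{1/2}\). Use this factor to absorb in the case \(q=s-1\), and afterwards to bound the case \(q=s-1-\delta\) directly, since its unknown is the already-proved \(B^{s-1}\to B^s\) norm.

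\textbf{Two misdiagnoses.}
\begin{enumerate}
\item In your expansion, \(R_j\) always acts on \(E_{(j-k+1)h}f\), that is, after at least one heat step. It therefore never has to act at index \(-1\), and the \(s=1\) obstacle you describe does not arise. Your ``first attempt'' is simply the correct bookkeeping.
\item The \(\delta\)-variant does not give extra room.
\begin{itemize}
\item \(q=s-1-\delta\) is the more singular case, with \(\beta=(1+\delta)/2\). It cannot yield the \(q=s-1\) bound, which has a different power of \(mh\).
\item The \(q=s-1\) case closes on its own, because \(h\sum(\cdot)^{-1/2}(\cdot)^{-1/2}\) is bounded by a beta integral.
\item In the paper, \(\delta\) appears because the scheme's source is only in \(L^\infty\subset B^{-\delta}\). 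The condition \(\delta<1\) keeps \(q>-1\), so the resolvent is bounded on \(B^q\), and keeps \(\beta<1\).
\end{itemize}
\end{enumerate}
Your unfinished intermediate step at \(B^{q+\epsilon}\) is not needed and can be dropped.
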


\begin{proof}
Fix \(q\in\{s-1,s-1-\delta\}\) and put \(\beta=(s-q)/2<1\).
Write \(A_n=v^n\cdot\nabla\). We first work on intervals of length
at most one; all constants in Steps 1--2 depend only on \(R,s,\delta\).

\emph{Step 1: compare one split step with heat evolution.}
Lemma~\ref{lem:transport-stability} gives
\(\|P_n\|_{B^r\to B^r}\le1+Ch\) for \(r=q,s-1,s\).
The product and heat estimates give
\[
 \|A_ng\|_{B^{s-1}}\le C\|g\|_{B^s},\qquad
 \|A_nE_hg\|_{B^s}\le Ch^{-1/2}\|g\|_{B^s}.
\]
At \(s=1\), the first estimate follows from the mixed product
bound \(B^1\cdot B^0\subset B^0\).
Consequently, for \(m\ge1\), \(mh\le1\),
\begin{equation}
 \|E_{(m-1)h}P_nA_nE_hg\|_{B^s}
       \le C(mh)^{-1/2}\|g\|_{B^s}.
 \label{ep:parametrix-kernel}
\end{equation}
For \(m=1\), use the second estimate above. For \(m\ge2\), use
the first estimate and gain one derivative from the outer heat factor.

\emph{Step 2: sum the corrections on a short interval.}
Shift to \(k=0\), set \(z_0=f\), and let \(z_{n+1}=P_nE_hz_n\).
The first step gives
\[
 \|z_1\|_{B^q}\le C\|f\|_{B^q},\qquad
 \|z_1\|_{B^s}\le Ch^{-\beta}\|f\|_{B^q}.
\]
Starting at this smoothed node avoids applying a correction to the
rough initial datum. The identity \(P_jE_h=E_h-hP_jA_jE_h\) yields
\begin{equation}
 z_n=E_{(n-1)h}z_1-h\sum_{j=1}^{n-1}
              E_{(n-1-j)h}P_jA_jE_hz_j .
 \label{ep:parametrix}
\end{equation}
The first term is bounded by \(C(nh)^{-\beta}\|f\|_{B^q}\):
use the \(B^s\) bound on \(z_1\) for \(n=1\), and its \(B^q\)
bound with heat smoothing for \(n\ge2\).
Thus \eqref{ep:parametrix-kernel} gives, for \(nh\le1\),
\[
 \|z_n\|_{B^s}\le C(nh)^{-\beta}\|f\|_{B^q}
       +Ch\sum_{j=1}^{n-1}((n-j)h)^{-1/2}\|z_j\|_{B^s}.
\]
Since \(\beta<1\), splitting the sum at \(j=n/2\) gives
\[
 h\sum_{j=1}^{n-1}((n-j)h)^{-1/2}(jh)^{-\beta}
       \le C_\beta(nh)^{1/2-\beta}.
\]
For \(M_N=\max_{1\le n\le N}(nh)^\beta\|z_n\|_{B^s}\), it follows
that \(M_N\le C\|f\|_{B^q}+CT_0^{1/2}M_N\) whenever \(Nh\le T_0\).
Choose \(T_0=T_0(R,s,\delta)\le1\) with \(CT_0^{1/2}\le1/2\)
and absorb. The desired smoothing bound holds on every shifted interval
of length at most \(T_0\).

\emph{Step 3: extend to any finite interval.}
Heat contraction and transport stability give
\[
 \|\mathcal U(n,k)\|_{B^q\to B^q}\le e^{C(n-k)h}.
\]
If \((n-k)h>T_0\) and \(h\le T_0/2\), set
\(L=\lfloor T_0/h\rfloor\). In the factorization
\(\mathcal U(n,k)=\mathcal U(n,n-L)\mathcal U(n-L,k)\),
apply stability before the last \(L\) steps and smoothing over those
steps, whose duration lies in \([T_0/2,T_0]\).
This gives \eqref{ep:product-smoothing-bound}, with a constant depending
on \(T\) and a step threshold depending only on \(R,s,\delta\).
\end{proof}

\paragraph{A forced split recurrence.}
The stress also receives a source at each step. Applying the homogeneous
estimate to each contribution gives the Duhamel bound needed for this update.
For the velocity sequences and step sizes in
Lemma~\ref{ep:product-smoothing}, consider
\(z^{n+1}=P_nE_h(z^n+hg^n)\), with \(z^0\in B^{s-1}\),
\(g^n\in B^q\), and \(q\in\{s-1,s-1-\delta\}\).
For \(n\ge1\) and \(\beta=(s-q)/2<1\), the discrete Duhamel formula gives
\begin{equation}
 \|z^n\|_{B^s}\le C\left(t_n^{-1/2}\|z^0\|_{B^{s-1}}
 +h\sum_{k=0}^{n-1}(t_n-t_k)^{-\beta}\|g^k\|_{B^q}\right).
 \label{eq:forced-split-smoothing}
\end{equation}
Here \(C=C_{T,R,s,\delta}\). Consequently, for \(Jh\le T\),
\begin{equation}
 h\sum_{n=1}^J\|z^n\|_{B^s}
 \le C_{T,R,s,\delta}\left(\|z^0\|_{B^{s-1}}
             +h\sum_{k=0}^{J-1}\|g^k\|_{B^q}\right).
 \label{eq:forced-split-integrated}
\end{equation}
Indeed, \(z^n=\mathcal U(n,0)z^0+h\sum_{k<n}\mathcal U(n,k)g^k\);
apply the lemma and sum the kernels using
\[
 h\sum_{m=1}^J(mh)^{-\beta}\le \frac{T^{1-\beta}}{1-\beta}.
\]
For the critical stress, take \(s=1\) and \(q=-\delta\). The resulting
nodal regularity makes the transport term integrable in \(B^0\), allowing
us to rewrite the full step as a heat recurrence with integrable forcing.
The choice \(s=1+\alpha\), \(q=\alpha\) will later give the
stress estimate with \(\alpha\) additional Besov derivatives.

\begin{proposition}[Stress smoothing and integrable heat forcing]
\label{ep:stress-smoothing}
Under \eqref{ep:prefix}, for sufficiently small \(h\),
\begin{align}
 \|\tau^n\|_{B^1}
   &\le C_{T,R,\delta}
       \bigl(t_n^{-1/2}\|\tau^0\|_{B^0}
                              +t_n^{(1-\delta)/2}\bigr),
       &&1\le n\le J, \label{ep:stress-pointwise}\\
 h\sum_{n=1}^J\|\tau^n\|_{B^1}
   &\le C_{T,R,\delta}. \label{ep:stress-integrated}
\end{align}
Define
\begin{equation}
 F_n=E_hS^n-u^n\cdot\nabla\tau^{n+1}.
 \quad\text{Then}\quad
 \tau^{n+1}=E_h\tau^n+hF_n .
 \label{ep:heat-forcing-definition}
\end{equation}
These forcing terms satisfy
\begin{equation}
 \|F_n\|_{B^0}\le C_{T,R,\delta}(1+t_{n+1}^{-1/2}),
 \qquad h\sum_{n=0}^{J-1}\|F_n\|_{B^0}\le C_{T,R,\delta}.
 \label{ep:heat-forcing-bound}
\end{equation}
\end{proposition}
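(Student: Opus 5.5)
The plan is to apply the forced split recurrence \eqref{eq:forced-split-smoothing}--\eqref{eq:forced-split-integrated} with \(s=1\), \(q=-\delta\), velocity sequence \(v^n=u^n\) and forcing \(g^n=S^n\). Under \eqref{ep:prefix}, recovery gives \(\sup_n\|u^n\|_{B^1}\le CR\), so Lemma~\ref{ep:product-smoothing} applies for \(h\) small in terms of \(R,\delta\). The stress step \eqref{ep:stress-step} is exactly \(z^{n+1}=P_nE_h(z^n+hg^n)\) with \(z^0=\tau^0\in B^0=B^{s-1}\).

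\emph{Bound on the forcing.} The source \(S^n=D(u^n)-Q(\nabla u^n,\tau^n)\) is not controlled in \(B^0\), since both factors lie only in \(B^0\). It is, however, bounded in \(B^{-\delta}\). The linear term satisfies \(\|D(u^n)\|_{B^0}\le C\|u^n\|_{B^1}\). The product satisfies \(\|Q(\nabla u^n,\tau^n)\|_{B^{-\delta}}\le C_\delta\|\nabla u^n\|_{B^0}\|\tau^n\|_{B^0}\) by \eqref{ep:negative-product}. Hence \(\|S^n\|_{B^{-\delta}}\le C_{R,\delta}\) for every \(n\le J\), and this is where the choice \(q=-\delta\) (so \(\beta=(1+\delta)/2<1\)) is essential.

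\emph{Pointwise and integrated smoothing.} Inserting this into \eqref{eq:forced-split-smoothing} gives
\[
 \|\tau^n\|_{B^1}\le C\Bigl(t_n^{-1/2}\|\tau^0\|_{B^0}+h\sum_{k=0}^{n-1}(t_n-t_k)^{-(1+\delta)/2}\Bigr).
\]
The discrete kernel sum is \(h\sum_{m=1}^n(mh)^{-\beta}\le C t_n^{1-\beta}=Ct_n^{(1-\delta)/2}\), which gives \eqref{ep:stress-pointwise}. Summing in \(n\), or using \eqref{eq:forced-split-integrated} directly, gives \eqref{ep:stress-integrated}, because \(h\sum t_n^{-1/2}\le CT^{1/2}\).

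\emph{The heat forcing.} The identity \eqref{ep:heat-forcing-definition} follows by rewriting \(\tau^{n+1}+hu^n\cdot\nabla\tau^{n+1}=E_h\tau^n+hE_hS^n\). For the bound we estimate the two terms of \(F_n\) separately.

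\begin{enumerate}[label=(\alph*),leftmargin=*,itemsep=3pt]
\item The first term \(E_hS^n\) cannot be bounded uniformly in \(B^0\) from the \(B^{-\delta}\) bound alone, since heat smoothing only gives \(h^{-\delta/2}\). Instead I would use that \(S^n\in L^\infty\) with norm \(\le C_R\), by the \(L^\infty\) bounds on \(\nabla u^n\) and \(\tau^n\) coming from \(B^0\hookrightarrow L^\infty\). Combined with the localized bound \(\|E_hf\|_{B^0}\lesssim(1+|\log h|)\|f\|_\infty\), this gives a logarithm. So I plan to use the product estimate \eqref{ep:positive-multiplier} for \(n\ge1\) instead: \(\|Q(\nabla u^n,\tau^n)\|_{B^0}\le C\|u^n\|_{B^1}\|\tau^n\|_{B^1}\le C(1+t_n^{-1/2})\). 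For \(n=0\), only the \(L^\infty\) bound is available; I would absorb it via \(\|E_hS^0\|_{B^0}\lesssim h^{-\delta/2}\le t_1^{-1/2}\).
\item The transport term is bounded by the mixed product law \(B^1\cdot B^0\subset B^0\), which gives \(\|u^n\cdot\nabla\tau^{n+1}\|_{B^0}\le C\|u^n\|_{B^0}\|\tau^{n+1}\|_{B^1}\le C(1+t_{n+1}^{-1/2})\) by \eqref{ep:stress-pointwise}. Equivalently, \(\Div(u^n\otimes\tau^{n+1})\) with \(u^n\in B^1\) and \(\tau^{n+1}\in B^1\) lies in \(B^0\).
\end{enumerate}

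Summation of these bounds gives the integrated estimate \(h\sum\|F_n\|_{B^0}\le C\).

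\emph{Main obstacle.} The delicate points are the \(n=0\) forcing term and ensuring the mixed product \(B^0\times B^1\to B^0\) holds with \(\|\nabla u^n\|_{B^0}\). I expect the first to be the main obstacle. I would handle it by crude bounds with \(t_1=h\), so that the single factor \(h\) absorbs it.
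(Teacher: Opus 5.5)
Your proposal is correct and follows essentially the same route as the paper. You apply the forced split recurrence with $s=1$, $q=-\delta$ for the nodal $B^1$ bounds. For $n\ge1$ you bound $E_hS^n$ and $u^n\cdot\nabla\tau^{n+1}$ by the mixed product law, where $t_{n+1}\le 2t_n$ converts $t_n^{-1/2}$ to $t_{n+1}^{-1/2}$. The first cell you handle with a crude heat bound no worse than $h^{-1/2}=t_1^{-1/2}$. The only slip is in (b): the velocity factor should be $\|u^n\|_{B^1}$ (or $\|u^n\|_{B^\sigma}$ with $\sigma>0$), not $\|u^n\|_{B^0}$, because \eqref{ep:positive-multiplier} needs positive regularity on one factor.
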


\begin{proof}
\emph{Smoothing of the numerical stress.}
The conditional bound gives \(\|S^n\|_\infty\le C_R\), hence
\(\|S^n\|_{B^{-\delta}}\le C_{R,\delta}\), including \(n=0\).
Apply \eqref{eq:forced-split-smoothing} to the stress step with
\(v^n=u^n\), \(z^n=\tau^n\), \(g^n=S^n\), \(s=1\), and
\(q=-\delta\). Since
\[
 h\sum_{m=1}^n(mh)^{-(1+\delta)/2}
       \le C_\delta t_n^{(1-\delta)/2},
\]
this proves \eqref{ep:stress-pointwise}; summation proves
\eqref{ep:stress-integrated}.

\emph{Control of the heat forcing.}
Rearranging the stress step gives \eqref{ep:heat-forcing-definition}.
For \(n\ge1\), the mixed product law gives
\[
 \|S^n\|_{B^0}\le C_R(1+\|\tau^n\|_{B^1}),\qquad
 \|u^n\cdot\nabla\tau^{n+1}\|_{B^0}
       \le C_R\|\tau^{n+1}\|_{B^1}.
\]
Together with heat contraction, \eqref{ep:stress-pointwise},
and \(t_{n+1}\le2t_n\), these prove the first estimate in
\eqref{ep:heat-forcing-bound} for \(n\ge1\).
At \(n=0\), only \(S^0\in L^\infty\) is available, but the heat
bound from Lemma~\ref{ep:one-step} gives
\(\|E_hS^0\|_{B^0}\le C_Rh^{-1/2}\).
Together with \eqref{ep:stress-pointwise} at \(n=1\), this proves
the first-cell estimate. Finally,
\(h\sum_{n=0}^{J-1}t_{n+1}^{-1/2}\le2\sqrt{Jh}\).
\end{proof}

\paragraph{Variation and regularity after heat evolution.}
The forcing bound \eqref{ep:heat-forcing-bound} allows a second use of
diffusion through the heat recurrence \eqref{ep:heat-forcing-definition}.
The next estimate controls the sum of the stress increments and the
time-summed \(B^2\) norm of \(E_h\tau^n\). The latter gives the regularity
of the limiting stress; the variation bound will also control the
sequential source correction in Section~\ref{sec:quantitative-proofs}.

\begin{lemma}[Discrete variation and smoothing after heat evolution]
\label{ep:heat-maximal}
Let \(r\in\mathbb R\) and suppose
\(z^{n+1}=E_hz^n+h f_n\), with \(z^0,f_n\in B^r\).
Then, for \(Jh\le T\) and \(0<h\le1\),
\begin{align}
 &\sum_{n=0}^{J-1}\|z^{n+1}-z^n\|_{B^r}
  +h\sum_{n=0}^{J-1}\|E_hz^n\|_{B^{r+2}}
 \notag\\
 &\hspace{25mm}\le C_{T,a,r}
       \left(\|z^0\|_{B^r}+h\sum_{n=0}^{J-1}\|f_n\|_{B^r}\right).
 \label{ep:heat-maximal-bound}
\end{align}
\end{lemma}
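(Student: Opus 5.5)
The plan is to work dyadic block by block. The operator \(E_h\) acts on \(\Delta_j\) as a Fourier multiplier with symbol \(e^{-h(a+|k|^2)}\), where \(|k|\sim2^j\) (for \(j=-1\) the symbol is \(e^{-h(a+|k|^2)}\) with \(|k|\lesssim1\)). Localized heat bounds give
\[
\|\Delta_jE_t g\|_\infty\le Ce^{-c t(a+2^{2j})}\|\Delta_j g\|_\infty \quad (j\ge0),
\qquad \|\Delta_{-1}E_tg\|_\infty\le\|\Delta_{-1}g\|_\infty .
\]
Solving the recurrence by discrete Duhamel gives
\[
 z^n=E_{nh}z^0+h\sum_{k<n}E_{(n-1-k)h}f_k .
\]
It therefore suffices to prove the estimate for each block with \(\ell^1\)-in-\(j\) summation done at the very end. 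Both quantities on the left are linear in the data, so I treat \(z^0\) and each single forcing \(f_k\) separately and add up by the triangle inequality.

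\emph{Smoothing term.} I use \(\|\Delta_jE_hz^n\|_\infty\) and bound
\[
 h\sum_n 2^{2j}\|\Delta_jE_{(n+1)h}z^0\|_\infty\le C h2^{2j}\sum_{n\ge1}e^{-cnh2^{2j}}\|\Delta_jz^0\|_\infty\le C\|\Delta_jz^0\|_\infty ,
\]
which holds uniformly in \(j\ge0\), since \(\sum_n x e^{-cnx}\le C\) for all \(x>0\). For \(j=-1\), the factor \(2^{2j}\) is bounded and \(Jh\le T\), which gives \(C_T\). For a forcing \(f_k\), the contributions at times \(n>k\) involve \(E_{(n-k)h}f_k\), and the same geometric bound gives \(Ch\|\Delta_jf_k\|_\infty\). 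Multiplying by \(2^{rj}\) and summing over \(j\) and \(k\) gives the \(B^{r+2}\) part.

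\emph{Variation term.} I write
\[
 z^{n+1}-z^n=(E_h-I)z^n+hf_n .
\]
The piece \(hf_n\) trivially sums to \(h\sum\|f_n\|_{B^r}\). On block \(j\), the multiplier \(E_h-I\) is bounded by \(C\min\{1,h(a+2^{2j})\}\). Inserting the Duhamel formula, the contribution of \(z^0\) is
\[
 \sum_n\min\{1,h2^{2j}\}e^{-cnh2^{2j}}\|\Delta_jz^0\|_\infty\le C\|\Delta_jz^0\|_\infty .
\]
For \(h2^{2j}\le1\) the geometric series gives \(\lesssim h2^{2j}\cdot(h2^{2j})^{-1}\); otherwise the decay is super-geometric. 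The \(a\)-part and the block \(j=-1\) contribute at most \(CJh\,(a+1)\le C_{T,a}\). Each forcing \(hf_k\) is treated identically with the starting time shifted, contributing \(Ch\|\Delta_jf_k\|_\infty\). This also avoids using the \(B^{r+2}\) bound in the variation estimate, although alternatively \(\|(E_h-I)z^n\|_{B^r}\) could be split using \eqref{eq:E-difference}.

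\emph{Main obstacle.} The delicate points are the exact block-wise multiplier bounds: the \(L^\infty\) operator norm of \(\Delta_jE_t\) must decay like \(e^{-ct2^{2j}}\) with constants uniform in \(j\), and those of \(\Delta_j(E_h-I)\) like \(\min\{1,h2^{2j}\}\). These are standard (BCD Lemma 2.4, periodized), and I would take them as known, as the paper did for \eqref{eq:E-difference}. The uniformity of \(\sum_{n\ge1}\min\{1,x\}e^{-cnx}\le C\) in \(x>0\) is the key elementary inequality, and it holds because \(\sum_{n\ge1}e^{-cnx}\le C/x\) for small \(x\) and \(\le Ce^{-cx}\) for large \(x\). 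The resulting constant depends only on \(T\), \(a\), and \(r\), the last only through the lowest block.
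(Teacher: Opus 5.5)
Your proposal is correct and follows essentially the same argument as the paper: blockwise localized heat bounds $e^{-cmh2^{2j}}$ together with the bound $\min\{1,h2^{2j}\}$ for $E_h-I$, the discrete Duhamel formula, the splitting $z^{n+1}-z^n=(E_h-I)z^n+hf_n$, the two geometric sums that are uniform in $h2^{2j}$, and a separate treatment of the lowest block by heat contraction and the bounded low-frequency generator. Your separate handling of the $ha$ part is harmless but not needed, since $h(a+2^{2j})\le(1+a)h2^{2j}$ for $j\ge0$.
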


\begin{proof}
For \(j\ge0\), put \(\lambda_j=2^{2j}\).
Localized periodic heat kernels satisfy, for \(m\ge0\),
\[
 \|E_{mh}\Delta_j f\|_\infty
       \le Ce^{-cmh\lambda_j}\|\Delta_jf\|_\infty,
\]
and
\begin{equation}
 \|(E_h-I)E_{mh}\Delta_j f\|_\infty
 \le C_a\min(h\lambda_j,1)e^{-cmh\lambda_j}
                                      \|\Delta_jf\|_\infty .
 \label{ep:heat-difference-kernel}
\end{equation}
For \(h\lambda_j\le1\), integrate the localized generator
\((\Delta-a)E_t\) over a time interval of length \(h\);
for \(h\lambda_j\ge1\), bound the two heat operators separately.
Thus
\begin{equation}
 \sum_{m\ge0}\min(h\lambda_j,1)e^{-cmh\lambda_j}\le C,
 \qquad
 \sum_{m\ge0}h\lambda_j e^{-c(m+1)h\lambda_j}\le C .
 \label{ep:heat-geometric-sums}
\end{equation}
The discrete Duhamel formula is
\[
 z^n=E_{nh}z^0+h\sum_{k=0}^{n-1}E_{(n-1-k)h}f_k .
\]
Apply \eqref{ep:heat-difference-kernel} to this formula and use
\(z^{n+1}-z^n=(E_h-I)z^n+h f_n\).
After summing first in \(n\), the first geometric bound gives the
variation estimate at every high block.
For \(E_hz^n\), the extra heat step gives the second geometric
bound and hence the full \(B^{r+2}\) estimate.
Multiply these block estimates by \(2^{rj}\) and sum in \(j\).

The low block requires no spectral gap. Heat contraction and
Duhamel give
\[
 \max_{0\le n\le J}\|\Delta_{-1}z^n\|_\infty
 \le \|\Delta_{-1}z^0\|_\infty
                       +h\sum_{n=0}^{J-1}\|\Delta_{-1}f_n\|_\infty.
\]
Moreover, the low-frequency generator is bounded, so that
\(\|(E_h-I)\Delta_{-1}z^n\|_\infty
\le C_a h\|\Delta_{-1}z^n\|_\infty\).
These estimates control the low-frequency contributions to both sums
in \eqref{ep:heat-maximal-bound} by \(C_{T,a,r}\) times its right-hand side.
They also include the constant Fourier mode.
\end{proof}

Applying this estimate to the stress gives the following consequence.
The \(B^2\) bound applies to \(E_h\tau^n\), the stress after heat evolution;
the computed stress \(\tau^{n+1}\) also undergoes a transport solve, which
need not preserve \(B^2\).

\begin{corollary}[Stress variation and two-derivative bounds after heat evolution]
\label{ep:stress-variation}
Under \eqref{ep:prefix}, uniformly in the numerical prefix,
\begin{equation}
 \sum_{n=0}^{J-1}\|\tau^{n+1}-\tau^n\|_{B^0}
 +h\sum_{n=0}^{J-1}\|E_h\tau^n\|_{B^2}\le C_{T,R,\delta}.
 \label{ep:stress-variation-bound}
\end{equation}
\end{corollary}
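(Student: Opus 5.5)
The plan is to apply Lemma~\ref{ep:heat-maximal} directly to the stress recurrence, with \(r=0\), \(z^n=\tau^n\), and forcing \(f_n=F_n\). Proposition~\ref{ep:stress-smoothing} already rewrites the step as the exact heat recurrence \eqref{ep:heat-forcing-definition}, \(\tau^{n+1}=E_h\tau^n+hF_n\). The estimate \eqref{ep:heat-forcing-bound} supplies \(h\sum_{n<J}\|F_n\|_{B^0}\le C_{T,R,\delta}\) uniformly in the prefix. The initial term is controlled by \eqref{ep:prefix}, since \(\|\tau^0\|_{B^0}\le R\).

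The lemma requires \(z^0,f_n\in B^r\). We have \(\tau^0\in B^0\) by hypothesis. For \(F_n\) we need \(n\ge1\) together with \(\tau^n,\tau^{n+1}\in B^1\) (Lemma~\ref{ep:one-step} and Proposition~\ref{ep:stress-smoothing}). For \(n=0\) we use \(E_hS^0\in B^0\) from the localized heat bound. Each \(F_n\) therefore has finite \(B^0\) norm, and the first-cell bound \(C_Rh^{-1/2}\) contributes only \(C_R\sqrt h\) to the weighted sum, as already recorded in \eqref{ep:heat-forcing-bound}.

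Substituting into \eqref{ep:heat-maximal-bound} gives
\[
\sum_{n=0}^{J-1}\|\tau^{n+1}-\tau^n\|_{B^0}+h\sum_{n=0}^{J-1}\|E_h\tau^n\|_{B^2}\le C_{T,a}\bigl(R+C_{T,R,\delta}\bigr),
\]
which is \eqref{ep:stress-variation-bound} once the dependence on \(a\) is absorbed into the convention for constants. The constants in Lemma~\ref{ep:heat-maximal} do not depend on \(J\) or \(h\), and neither do those in Proposition~\ref{ep:stress-smoothing}. The bound is therefore uniform over numerical prefixes with \(Jh\le T\), for \(h\) below the threshold of Proposition~\ref{ep:stress-smoothing} and \(h\le1\).

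The only point needing care is that Lemma~\ref{ep:heat-maximal} is applied to the exact identity \(\tau^{n+1}=E_h\tau^n+hF_n\) rather than to the split step. The transport part is hidden in \(F_n\), which is integrable in \(B^0\) precisely because of the one-derivative nodal gain. The \(B^2\) control therefore applies to \(E_h\tau^n\), not to \(\tau^{n+1}\).
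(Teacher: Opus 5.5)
Your proposal is correct and coincides with the paper's proof. The paper applies Lemma~\ref{ep:heat-maximal} with \(r=0\), \(z^n=\tau^n\), \(f_n=F_n\) to the exact heat recurrence \eqref{ep:heat-forcing-definition}, using \eqref{ep:heat-forcing-bound} for the forcing and \eqref{ep:prefix} for \(\|\tau^0\|_{B^0}\).
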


\begin{proof}
Apply Lemma~\ref{ep:heat-maximal} with \(r=0\),
\(z^n=\tau^n\), and \(f_n=F_n\), using
\eqref{ep:heat-forcing-bound}.
\end{proof}

\begin{remark}[Sharpness of nodal stress smoothing]
\label{rem:nodal-smoothing-sharpness}
When the transport velocity is controlled only in \(B^1\), the
one-derivative nodal gain in Proposition~\ref{ep:stress-smoothing}
is sharp: even for smooth initial stress, the stress after one full
step can fail to belong to \(B^{1+\varepsilon}\) for every
\(\varepsilon>0\). Appendix~\ref{sec:smoothing-sharpness} gives
the counterexample and the complementary estimates at the other
stages of the split step.
\end{remark}

\subsection{Uniform critical frequency tails}
\label{ep:frequency-tails}

To obtain compactness in the critical norms, we need to control the
high-frequency tails as \(h\to0\). A bound in \(B^0\) alone does not
give this property. For integers \(N\ge0\), define
\begin{equation}
 \vartheta_N(j)=\min\{1,2^{(j-N)/2}\},\qquad
 \mathcal T_N^s(f)=\sum_{j\ge-1}\vartheta_N(j)2^{sj}
                                      \|\Delta_jf\|_\infty.
 \label{ep:tail-functional}
\end{equation}
These quantities are finite for every \(f\in B^s\), satisfy
\(\mathcal T_N^s(f)\le\|f\|_{B^s}\), and tend to zero as
\(N\to\infty\) by dominated convergence. Since
\(\vartheta_N(j)=1\) for \(j\ge N\), they control the corresponding
frequency tails. We will propagate them with constants independent
of \(N\), using the neighboring-index bound
\begin{equation}
 0<\vartheta_N(j)\le\vartheta_N(j+1)
                   \le2^{1/2}\vartheta_N(j).
 \label{ep:tail-weight-ratio}
\end{equation}

The next lemma shows that the transport and product estimates preserve
these tail factors, with constants uniform in the cutoff.

\begin{lemma}[Estimates for the frequency tails]
\label{ep:tail-calculus}
Let \(v\in B^1\) be divergence free, \(f\in B^0\), and
\(g\in B^\sigma\), \(\sigma>0\). Uniformly in \(N\ge0\),
\begin{align}
 \sum_{j\ge-1}\vartheta_N(j)
           \|[\Delta_j,v\cdot\nabla]f\|_\infty
 &\le C\|v\|_{B^1}\mathcal T_N^0(f),
 \label{ep:tail-transport-commutator}\\
 \mathcal T_N^0([\mathcal R,v\cdot\nabla]f)
 &\le C\|v\|_{B^1}\mathcal T_N^0(f),
 \label{ep:tail-R-commutator}\\
 \mathcal T_N^0(fg)
 &\le C_\sigma\bigl(\mathcal T_N^0(f)\|g\|_{B^\sigma}
                    +\|f\|_{B^0}\mathcal T_N^0(g)\bigr).
 \label{ep:tail-product}
\end{align}
For sufficiently small \(h\|v\|_{B^1}\), the transport inverse obeys
\begin{equation}
 \mathcal T_N^0((I+hv\cdot\nabla)^{-1}f)
 \le(1+Ch\|v\|_{B^1})\mathcal T_N^0(f).
 \label{ep:tail-resolvent}
\end{equation}
All estimates hold componentwise.
\end{lemma}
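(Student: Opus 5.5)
The plan is to rerun the proofs of \eqref{eq:critical-tame-transport}, \eqref{ep:Rcomm}, \eqref{ep:positive-multiplier} and Lemma~\ref{lem:transport-stability} with the extra weight \(\vartheta_N(j)\) attached to each output block \(j\). The only new input is the quasi-monotonicity \eqref{ep:tail-weight-ratio}: it lets us move the weight from the output block onto whichever input block carries the relevant frequency, with a constant independent of \(N\). In every case we want the \(\ell^1\) sum of the weighted input \(F_\ell=\vartheta_N(\ell)\|\Delta_\ell f\|_\infty\), which is \(\mathcal T_N^0(f)\), while the velocity or the smoother factor enters only through its unweighted norm.

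\emph{Commutator \eqref{ep:tail-transport-commutator}.} Decompose \([\Delta_j,v\cdot\nabla]f\) into the three Bony pieces, as in the proof of Lemma~\ref{lem:transport-stability} at \(r=0\).
\begin{itemize}[leftmargin=*]
\item Low--high: output \(j\) sees only \(f\)-blocks with \(|\ell-j|\le N_0\). Since \(\vartheta_N(j)\le 2^{N_0/2}\vartheta_N(\ell)\), we get \(C\|\nabla v\|_\infty\mathcal T_N^0(f)\).
\item High--low: block \(k\) of \(v\), with \(|k-j|\le N_0\), meets \(f\)-blocks \(\ell\le k-2\), with the kernel bounded by \(V_k 2^{-(k-\ell)}\|\Delta_\ell f\|_\infty\). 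Here \(\vartheta_N(j)\le C\vartheta_N(k)\le C2^{(k-\ell)/2}\vartheta_N(\ell)\), because \(\vartheta_N\) grows by at most \(2^{1/2}\) per step. This leaves the summable factor \(2^{-(k-\ell)/2}\), and Schur's test bounds the piece by \(C\|v\|_{B^1}\mathcal T_N^0(f)\). This is the main obstacle: half of the decay \(2^{-(1-r)(k-\ell)}\) at \(r=0\) is spent on the weight, so the argument works only because the exponent in \(\vartheta_N\) is \(1/2<1\).
\item High--high: incompressibility puts the derivative on the output, giving \(2^{j-k}V_k\|\widetilde\Delta_kf\|_\infty\) for \(j\le k+N_0\). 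Monotonicity gives \(\vartheta_N(j)\le C\vartheta_N(k)\), so this piece needs no loss at all.
\end{itemize}
The lowest blocks are handled the same way.

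\emph{\(\mathcal R\)-commutator \eqref{ep:tail-R-commutator}.} The three localized bounds are those in the proof of Lemma~\ref{lem:gamma-source}, used at index zero as in \eqref{ep:Rcomm}. The weights are transferred exactly as above: in the high--low term the factor \(2^{\ell-k}\) absorbs \(2^{(k-\ell)/2}\), and in the high--high term monotonicity is used again.

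\emph{Product \eqref{ep:tail-product}.} Write \(fg=T_fg+T_gf+R(f,g)\).
\begin{itemize}[leftmargin=*]
\item \(T_gf\): output \(j\approx\) the \(f\)-block, so the bound is \(\|g\|_\infty\mathcal T_N^0(f)\).
\item \(T_fg\): output \(j\approx k\) with \(\|S_{k-1}f\|_\infty\le Ck\|f\|_{B^0}\). Writing \(\|\Delta_kg\|_\infty\le 2^{-\sigma k}\|g\|_{B^\sigma}\) loses the tail. Instead bound \(\|S_{k-1}f\|_\infty\le Ck\|f\|_{B^0}\) and spend the decay only partly: \(k\,\vartheta_N(k)\|\Delta_kg\|_\infty\) is not directly \(\mathcal T_N^0(g)\). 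So use the \(B^\sigma\) weighting of \(g\) and estimate \(\sum_k k\,\vartheta_N(k)\|\Delta_k g\|_\infty\), splitting \(k\) into the blocks where \(\vartheta_N(k)2^{\sigma k}\) is dominated. If this fails, the fallback is to put the tail on \(f\): bound \(\|S_{k-1}f\|_\infty\) by \(\sum_{\ell<k}\|\Delta_\ell f\|_\infty\), use \(\vartheta_N(k)\le 2^{(k-\ell)/2}\vartheta_N(\ell)\), and absorb \(2^{(k-\ell)/2}\) with \(2^{-\sigma k}\|g\|_{B^\sigma}\) when \(\sigma>1/2\). The honest approach is to redefine the splitting so that each term carries one of the two tails.
\item \(R(f,g)\): output \(j\le k+C\), with \(\vartheta_N(j)\le C\vartheta_N(k)\). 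The counting factor \((k+C)2^{-\sigma k}\) is handled as in \eqref{ep:positive-multiplier}.
\end{itemize}

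\emph{Resolvent \eqref{ep:tail-resolvent}.} This follows as in Lemma~\ref{lem:transport-stability}. Localize \(F+hv\cdot\nabla F=f\) with \(\Delta_j\), apply the stationary maximum principle to each block, multiply by \(\vartheta_N(j)\), and sum using \eqref{ep:tail-transport-commutator}:
\[
\mathcal T_N^0(F)\le\mathcal T_N^0(f)+Ch\|v\|_{B^1}\mathcal T_N^0(F).
\]
Absorbing the last term, which is finite since \(F\in B^0\), gives \eqref{ep:tail-resolvent}.
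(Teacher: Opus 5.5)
Your proofs of \eqref{ep:tail-transport-commutator}, \eqref{ep:tail-R-commutator} and \eqref{ep:tail-resolvent}, and of the $T_gf$ and remainder parts of \eqref{ep:tail-product}, follow the paper's argument. You move $\vartheta_N$ from the output block to the relevant input block by \eqref{ep:tail-weight-ratio}, paying $2^{(k-\ell)/2}$ out of the $2^{-(k-\ell)}$ decay in the high--low sum and using monotonicity in the high--high sum. You then absorb in the blockwise maximum principle. You omit the commutator pieces $T_{\nabla\Delta_jf}v$ and $\mathscr R(v,\nabla\Delta_jf)$, but there the weight already sits on $\Delta_jf$, so nothing needs to be transferred.

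The gap is the paraproduct $T_fg$ in \eqref{ep:tail-product}, which you leave open. Your bound $\|S_{k-1}f\|_\infty\le Ck\|f\|_{B^0}$ leaves the quantity $\sum_k k\,\vartheta_N(k)\|\Delta_kg\|_\infty$. This is not bounded by $C\,\mathcal T_N^0(g)$: take $g$ in a single block $k\ge N$. The splitting you mention is never carried out.

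Your fallback moves the weight onto $f$ at the price $2^{(k-\ell)/2}$ and absorbs it with $2^{-\sigma k}$. That works only for $\sigma>1/2$. It misses the range $0<\sigma\le1/2$ of the statement, including $\sigma=1/2$, which is exactly how the lemma is used (with $\|\tau^n\|_{B^{1/2}}$) in Proposition~\ref{ep:uniform-critical-tails}.

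The missing observation is that there is no logarithmic loss in $B^0=B^0_{\infty,1}$. Indeed $\|S_{k-1}f\|_\infty\le\sum_{\ell\le k-2}\|\Delta_\ell f\|_\infty\le\|f\|_{B^0}$; equivalently $\|S_{k-1}f\|_\infty\le C\|f\|_\infty$, since the kernels of $S_{k-1}$ are bounded in $L^1$ uniformly in $k$. Hence, by \eqref{ep:tail-weight-ratio},
\[
 \vartheta_N(j)\|\Delta_jT_fg\|_\infty
 \le C\|f\|_{B^0}\sum_{|k-j|\le4}\vartheta_N(k)\|\Delta_kg\|_\infty .
\]
Summing over $j$ gives $C\|f\|_{B^0}\mathcal T_N^0(g)$ with no use of $\sigma$. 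This is how the paper bounds both paraproducts, by $C\|g\|_\infty\mathcal T_N^0(f)$ and $C\|f\|_\infty\mathcal T_N^0(g)$. The hypothesis $\sigma>0$ enters only through the counting factor $(k+C)2^{-\sigma k}$ in the remainder.
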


\begin{proof}
Write \(V_k=2^k\|\Delta_kv\|_\infty\) and
\(F_\ell=\vartheta_N(\ell)\|\Delta_\ell f\|_\infty\).
The low--high kernel commutator in
\([\Delta_j,v\cdot\nabla]f\) contributes at most
\(C\|\nabla v\|_\infty\mathcal T_N^0(f)\).
By \eqref{ep:tail-weight-ratio}, the high--low sum is bounded by
\[
 C\sum_k\sum_{\ell\le k-2}V_kF_\ell\,
       2^{-(k-\ell)}\frac{\vartheta_N(k)}{\vartheta_N(\ell)}
 \le C\sum_k\sum_{\ell\le k-2}V_kF_\ell\,2^{-(k-\ell)/2}
 \le C\|V\|_{\ell^1}\|F\|_{\ell^1}.
\]
In the high--high terms, incompressibility puts the derivative on
the output block. With \(F_k^*\) a fixed neighboring sum, their
contribution is at most
\[
 C\sum_kV_kF_k^*\sum_{-1\le j\le k+C}
     2^{j-k}\frac{\vartheta_N(j)}{\vartheta_N(k)}
 \le C\|V\|_{\ell^1}\|F\|_{\ell^1}.
\]
Monotonicity controls \(j\le k\), and
\eqref{ep:tail-weight-ratio} controls the remaining finitely many
indices. The terms \(T_{\nabla\Delta_jf}v\) and
\(\mathscr R(v,\nabla\Delta_jf)\) contribute, respectively,
\[
 C\sum_jF_j\sum_{k\ge j-C}2^{j-k}V_k,
 \qquad C\sum_jF_j\sum_{|k-j|\le C}V_k,
\]
which have the same bound. This proves
\eqref{ep:tail-transport-commutator}.

For \eqref{ep:tail-R-commutator}, subtract the mean of \(v\),
which commutes with \(\mathcal R\). The localized estimates in
Lemma~\ref{lem:gamma-source} give the same three sums: the
low--high bound uses \(\|\nabla v\|_\infty\), the high--low
bound has the factor \(2^{\ell-k}\), and the high--high bound
has \(2^{j-k}\). Each occurrence of \(\mathcal R\) is localized
to an input or output block. The lowest periodic block has only
finitely many modes and obeys the same estimates.

The two paraproducts in \(fg\) are bounded by
\(C\|g\|_\infty\mathcal T_N^0(f)\) and
\(C\|f\|_\infty\mathcal T_N^0(g)\).
For the remainder, monotonicity gives
\[
 \begin{aligned}
 \mathcal T_N^0(\mathscr R(f,g))
 &\le C\sum_{k\ge-1}(k+C_*)\vartheta_N(k)
             \|\Delta_kf\|_\infty\|\widetilde\Delta_kg\|_\infty\\
 &\le C_\sigma\mathcal T_N^0(f)\|g\|_{B^\sigma},
 \end{aligned}
\]
since \(\sup_{k\ge-1}(k+C_*)2^{-\sigma k}<\infty\), for a fixed
sufficiently large integer \(C_*\). This proves
\eqref{ep:tail-product}.

Finally, let \(z=(I+hv\cdot\nabla)^{-1}f\in B^0\), whose
existence follows from Lemma~\ref{lem:transport-stability}.
The blockwise stationary maximum principle gives
\[
 \mathcal T_N^0(z)\le\mathcal T_N^0(f)
                  +Ch\|v\|_{B^1}\mathcal T_N^0(z).
\]
All sums are finite because \(\mathcal T_N^0(z)\le\|z\|_{B^0}\).
Absorption proves \eqref{ep:tail-resolvent}, with a step restriction
independent of \(N\).
\end{proof}

Applied to the sequential updates, these estimates give a recurrence for the frequency tails
with coefficients made summable by stress smoothing. Only its first
source contribution needs separate treatment, because the initial stress
has no positive regularity.

\begin{proposition}[Conditional propagation of critical frequency tails]
\label{ep:uniform-critical-tails}
For the fixed initial data, put
\begin{equation}
 r_N=\vartheta_N(-1)|m_0|
          +\mathcal T_N^0(\tau^0)+\mathcal T_N^0(\Gamma^0).
 \label{ep:initial-tail}
\end{equation}
Then \(r_N\to0\). Under \eqref{ep:prefix}, for all sufficiently
small \(h\) and every \(N\ge0\),
\begin{equation}
 \max_{0\le n\le J}
 \bigl(\mathcal T_N^1(u^n)+\mathcal T_N^0(\tau^n)
                          +\mathcal T_N^0(\Gamma^n)\bigr)
 \le C_{T,R}\bigl(r_N+h\log(e/h)\bigr).
 \label{ep:tail-orbit-bound}
\end{equation}
The constant and the step restriction are independent of \(N,J\).
For a standard smooth dyadic low-frequency cutoff \(S_N\), this implies
\begin{equation}
 \max_{0\le n\le J}
 \bigl(\|(I-S_N)u^n\|_{B^1}
       +\|(I-S_N)\tau^n\|_{B^0}
       +\|(I-S_N)\Gamma^n\|_{B^0}\bigr)
 \le C_{T,R}\bigl(r_N+h\log(e/h)\bigr).
 \label{ep:uniform-tail-bound}
\end{equation}
In particular, the limit superior as \(h\to0\), uniformly over
all admissible prefixes, tends to zero as \(N\to\infty\).
\end{proposition}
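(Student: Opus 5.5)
Set \(Y_n^N=\vartheta_N(-1)|m_0|+\mathcal T_N^0(\tau^n)+\mathcal T_N^0(\Gamma^n)\). The plan is to prove a one-step recurrence for \(Y_n^N\) whose coefficients are summable in \(n\), and then close it with a discrete Gronwall argument. First, \(r_N\to0\) follows from dominated convergence applied to the fixed data \(\tau^0,\Gamma^0\in B^0\); the weight \(\vartheta_N(-1)=2^{-(N+1)/2}\to0\) handles the mean. Next, recovery gives \(\mathcal T_N^1(u^n)\le C\,Y_n^N\). The reason is that \(\mathcal K\) and \(\mathcal R\) are Fourier multipliers: they act blockwise, up to finitely many neighbouring blocks, and these are controlled by \eqref{ep:tail-weight-ratio}. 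The mean contributes only to the block \(j=-1\), where the weight is \(\vartheta_N(-1)\). So the bound on \(u^n\) reduces to bounds on \(\tau^n\) and \(\Gamma^n\).

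For the stress step \eqref{ep:stress-step}, apply \eqref{ep:tail-resolvent} and use that \(E_h\) is a blockwise contraction. This gives \(\mathcal T_N^0(\tau^{n+1})\le(1+Ch)\bigl(\mathcal T_N^0(\tau^n)+h\,\mathcal T_N^0(E_hS^n)\bigr)\). For \(n\ge1\), estimate \(S^n\) with the tail product law \eqref{ep:tail-product} with \(\sigma=1\). The term \(D(u^n)\) is bounded by \(\mathcal T_N^1(u^n)\). The term \(Q(\nabla u^n,\tau^n)\) gives \(C\bigl(\mathcal T_N^0(\nabla u^n)\|\tau^n\|_{B^1}+\|u^n\|_{B^1}\mathcal T_N^1(\tau^n)\bigr)\). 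The factor \(\mathcal T_N^1(\tau^n)\) is not directly available, so I plan to use the heat factor instead. Since \(E_h\) is applied, it suffices to bound \(\mathcal T_N^0\bigl(E_h(\nabla u^n\,\tau^n)\bigr)\). I would place the product in \(B^{-\delta}\) with tails, bounded by \(C\,\mathcal T^0_N(\nabla u)\|\tau\|_{B^0}\) plus a symmetric term. Then \(\mathcal T^0_N(E_h g)\lesssim h^{-\delta/2}\mathcal T^{-\delta}_N(g)\). Writing the step as \(h\cdot h^{-\delta/2}\) still yields a summable coefficient. The alternative route uses \(\|\tau^n\|_{B^1}\): it is summable against \(h\) by \eqref{ep:stress-integrated}, so a term \(h\|\tau^n\|_{B^1}Y_n^N\) is admissible in Gronwall.

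The first step \(n=0\) is the one contribution needing separate treatment. There only \(S^0\in L^\infty\) is available, with no tail control. I would bound it by block frequency: \(\|\Delta_j E_hS^0\|_\infty\le Ce^{-ch2^{2j}}C_R\). This gives \(h\,\mathcal T_N^0(E_hS^0)\le Ch\sum_j e^{-ch4^j}\le Ch\log(e/h)\), which produces the \(h\log(e/h)\) term in \eqref{ep:tail-orbit-bound}.

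For the good-vorticity step, use \eqref{ep:tail-resolvent} again, together with \eqref{ep:tail-R-commutator} for the commutator term and \(\mathcal R\)'s blockwise action for the linear terms. The constitutive term uses the new stress: \eqref{ep:tail-product} gives \(\mathcal T_N^0(Q(\nabla u^n,\tau^{n+1}))\le C\bigl(\mathcal T_N^1(u^n)\|\tau^{n+1}\|_{B^1}+R\,\mathcal T_N^0(\tau^{n+1})\bigr)\). The factor \(\mathcal T_N^0(\tau^{n+1})\) was already bounded in the stress step. At \(n=0\), \(h\|\tau^1\|_{B^1}\le C\sqrt h\) by Lemma \ref{ep:one-step}, so that coefficient is only \(O(\sqrt h)\), which is harmless in the Gronwall product. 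The second factor of the first term is summable by \eqref{ep:stress-integrated}. Altogether the recurrence is \(Y_{n+1}^N\le(1+h\kappa_n)Y_n^N+C h\log(e/h)\mathbf 1_{n=0}\), with \(h\sum_n\kappa_n\le C_{T,R}\) uniformly in \(N\) and \(J\). Gronwall then gives \eqref{ep:tail-orbit-bound}. Finally, \eqref{ep:uniform-tail-bound} follows because \(\Delta_j(I-S_N)\) vanishes for \(j\le N-2\) and is dominated by the weight-one blocks otherwise.

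I expect the main obstacle to be the constitutive term \(Q(\nabla u^n,\tau^n)\) in the stress source. In tail form it requires a positive-regularity factor, and the coefficient it produces must be summable uniformly in \(N\) while exploiting either \(\|\tau^n\|_{B^1}\) or heat smoothing. I would first try the \(\|\tau^n\|_{B^1}\) route with \eqref{ep:stress-pointwise}. I would fall back on heat smoothing into \(B^{-\delta}\) if the \(\mathcal T^1_N(\tau^n)\) factor cannot be avoided.
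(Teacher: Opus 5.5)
Your architecture matches the paper's proof. You set up a weighted tail recurrence built from \eqref{ep:tail-resolvent}, \eqref{ep:tail-R-commutator} and \eqref{ep:tail-product}, with coefficients made summable by discrete stress smoothing. The only additive error comes from the first source, through $h\|E_hS^0\|_{B^0}\le C_Rh\log(e/h)$, and discrete Gronwall closes the argument. The only difference is the smoothness index. You take $\sigma=1$ in \eqref{ep:tail-product} and use \eqref{ep:stress-integrated}, with $h\|\tau^1\|_{B^1}\le C_R\sqrt h$ at the first step. The paper interpolates to $B^{1/2}$ and uses \eqref{ep:summable-positive-stress}, getting $h^{3/4}$ at the first step. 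Both choices work.

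However, the obstacle you single out does not exist; it comes from misapplying \eqref{ep:tail-product}. With $f=\nabla u^n\in B^0$ and $g=\tau^n\in B^1$, that estimate gives
\[
 \mathcal T_N^0\bigl(Q(\nabla u^n,\tau^n)\bigr)
 \le C\bigl(\mathcal T_N^0(\nabla u^n)\|\tau^n\|_{B^1}
      +\|\nabla u^n\|_{B^0}\mathcal T_N^0(\tau^n)\bigr)
 \le C_R\bigl(1+\|\tau^n\|_{B^1}\bigr)Y_n^N .
\]
The second term contains $\mathcal T_N^0(\tau^n)$, not $\mathcal T_N^1(\tau^n)$. This is exactly how you correctly applied the estimate to $Q(\nabla u^n,\tau^{n+1})$ in the good-vorticity step. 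So your first route closes immediately, and this is the step you should write down.

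You should also drop the heat-smoothing fallback, because it would not close. Bounding $h\,\mathcal T_N^0(E_h\,\cdot)$ by $h^{1-\delta/2}\mathcal T_N^{-\delta}(\cdot)$ gives a per-step coefficient $h^{1-\delta/2}$. Summed over $J\sim T/h$ steps, this is $Th^{-\delta/2}\to\infty$, so the Gronwall factor is not uniform in $h$. A single heat factor cannot replace the smoothing accumulated over many steps in Proposition~\ref{ep:stress-smoothing}. That accumulated smoothing is what makes $h\sum_n\|\tau^n\|_{B^1}$, or its $B^{1/2}$ analogue, bounded uniformly in $h$ and $N$.
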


\begin{proof}
Set
\[
 X_{N,n}=\vartheta_N(-1)|m_0|
            +\mathcal T_N^0(\tau^n)+\mathcal T_N^0(\Gamma^n).
\]
Localized multiplier bounds and \eqref{ep:tail-weight-ratio} give
\(\mathcal T_N^0(\mathcal Rf)\le C\mathcal T_N^0(f)\) and
\(\mathcal T_N^1(\mathcal Kf)\le C\mathcal T_N^0(f)\).
Thus recovery yields \(\mathcal T_N^1(u^n)\le CX_{N,n}\),
including the constant velocity mode.
Interpolation of the critical bound and
\eqref{ep:stress-pointwise}, with \(\delta=1/2\), gives
\begin{equation}
 \|\tau^n\|_{B^{1/2}}\le C_{T,R}(1+t_n^{-1/4}),\qquad
 h\sum_{n=1}^J\bigl(1+\|\tau^n\|_{B^{1/2}}\bigr)\le C_{T,R}.
 \label{ep:summable-positive-stress}
\end{equation}
If \(J=0\), the assertion follows directly from recovery.
Otherwise we first treat the initial source, for which only
\(\|S^0\|_\infty\le C_R\) is available. Localized heat bounds give
\begin{equation}
 h\mathcal T_N^0(E_hS^0)
 \le h\|E_hS^0\|_{B^0}
 \le C_Rh\left(1+\sum_{j\ge0}e^{-ch2^{2j}}\right)
 \le C_Rh\log(e/h).
 \label{ep:first-source-heat-bound}
\end{equation}
Heat contraction and \eqref{ep:tail-resolvent} therefore imply
\[
 \mathcal T_N^0(\tau^1)
 \le(1+C_Rh)\bigl(\mathcal T_N^0(\tau^0)
                                   +C_Rh\log(e/h)\bigr).
\]
The new stress lies in \(B^1\), so the product and commutator
estimates give
\[
 \mathcal T_N^0(G^0)
 \le C_R\bigl((1+\|\tau^1\|_{B^{1/2}})X_{N,0}
                                  +\mathcal T_N^0(\tau^1)\bigr).
\]
After the good-vorticity solve, using
\(h\|\tau^1\|_{B^{1/2}}\le C_{T,R}h^{3/4}\), we obtain
\begin{equation}
 X_{N,1}\le(1+C_{T,R}h^{3/4})r_N+C_{T,R}h\log(e/h).
 \label{ep:first-tail-step}
\end{equation}

For \(1\le n<J\), the same estimates bound
\(\mathcal T_N^0(S^n)\) by
\(C_R(1+\|\tau^n\|_{B^{1/2}})X_{N,n}\).
Solving first for stress and then for good vorticity gives
\begin{align*}
 \mathcal T_N^0(\tau^{n+1})
 &\le(1+C_Rh)\bigl[\mathcal T_N^0(\tau^n)
       +C_Rh(1+\|\tau^n\|_{B^{1/2}})X_{N,n}\bigr],\\
 \mathcal T_N^0(\Gamma^{n+1})
 &\le(1+C_Rh)\bigl[\mathcal T_N^0(\Gamma^n)
       +C_Rh(1+\|\tau^{n+1}\|_{B^{1/2}})X_{N,n}
       +C_Rh\mathcal T_N^0(\tau^{n+1})\bigr].
\end{align*}
Substitute the first estimate into the second and add the conserved
mean contribution. For \(h\le1\), this yields
\[
 X_{N,n+1}\le
 \bigl[1+C_Rh(1+\|\tau^n\|_{B^{1/2}}
                 +\|\tau^{n+1}\|_{B^{1/2}})\bigr]X_{N,n}.
\]
Discrete Gronwall, \eqref{ep:summable-positive-stress}, and
\eqref{ep:first-tail-step} give
\(\max_{n\le J}X_{N,n}\le C_{T,R}(r_N+h\log(e/h))\).
The additive heat-source error enters only at the first step.
Recovery proves \eqref{ep:tail-orbit-bound}.

Finally, almost orthogonality gives, for \(s=0,1\),
\[
 \|(I-S_N)f\|_{B^s}
 \le C\sum_{j\ge N-C}2^{sj}\|\Delta_jf\|_\infty
 \le C\mathcal T_N^s(f),
\]
since \(\vartheta_N(j)\) is bounded below on \(j\ge N-C\)
independently of \(N\). This proves \eqref{ep:uniform-tail-bound}.
\end{proof}

\subsection{Compactness and identification of numerical limits}
\label{ep:compactness-section}

The frequency-tail bound controls the spatial oscillations uniformly,
and estimates for time increments in weaker Besov norms give compactness
in time. We extend each interpolant constantly after its final time node
to compare numerical intervals of different lengths on \([0,T]\).
The bound on \(E_h\tau^n\) also gives the \(L^1_tB^2\) regularity of the
limiting stress needed for uniqueness.

\begin{proposition}[Strong compactness at critical regularity]
\label{ep:weak-compactness}
Consider numerical prefixes of the sequential scheme initialized
with the fixed critical data of this section and satisfying
\eqref{ep:prefix} with a common bound \(R\).
For a nodal field \(f^n\), denote its affine interpolant by \(f_h\)
and define the left- and right-endpoint piecewise constant interpolants by
\[
 f_h^{\mathrm{L}}(t)=f^n,\qquad
 f_h^{\mathrm{R}}(t)=f^{n+1},
 \qquad t\in[t_n,t_{n+1}),\quad 0\le n<J.
\]
Set all three interpolants equal to \(f^J\) on \([t_J,T]\).
For any sequence with \(h\to0\) and \(Jh\to T_*\in[0,T]\), a
subsequence satisfies
\begin{align}
 u_h&\longrightarrow u_* &&\text{in }C([0,T];B^1),
 \label{ep:u-compact}\\
 \tau_h&\longrightarrow\tau_* &&\text{in }C([0,T];B^0),
 \label{ep:tau-weak-compact}\\
 \Gamma_h&\longrightarrow\Gamma_* &&\text{in }C([0,T];B^0).
 \label{ep:gamma-weak-compact}
\end{align}
The one-sided interpolants converge uniformly in the same spatial
norms. The limits are constant after \(T_*\), and, if \(T_*>0\),
\begin{equation}
 \tau_*\in L^1(0,T_*;B^2),
 \qquad \|\tau_*\|_{L^1(0,T_*;B^2)}\le C_{T,R}.
 \label{ep:limit-bounds}
\end{equation}
\end{proposition}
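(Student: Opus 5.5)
The proof will be an Arzelà--Ascoli argument in the critical norms, with spatial compactness supplied by the frequency tails and temporal equicontinuity supplied by increment bounds in weaker norms. Because \(u^n=m_0+\mathcal K(\Gamma^n+\mathcal R\tau^n)\) and Lemma~\ref{lem:multiplier-bookkeeping} bounds \(\mathcal K\mathcal R\) and \(\mathcal K\), it is enough to prove \eqref{ep:tau-weak-compact}--\eqref{ep:gamma-weak-compact}; convergence of \(u_h\) in \(C([0,T];B^1)\) then follows by applying recovery linearly to the interpolants.

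\emph{Spatial compactness.} Fix \(N\). The truncated fields \(S_N\tau^n\) and \(S_N\Gamma^n\) live in a finite-dimensional space of trigonometric polynomials, and there all norms are equivalent. Proposition~\ref{ep:uniform-critical-tails} gives
\[
\sup_n\|(I-S_N)f^n\|_{B^0}\le C_{T,R}\bigl(r_N+h\log(e/h)\bigr),
\]
and this bound also holds for the affine and one-sided interpolants, since each is a convex combination of nodal values. So for every \(\varepsilon\) we can pick \(N\) and an \(h_0\) such that, for \(h<h_0\), every interpolant lies within \(\varepsilon\) of a bounded subset of a finite-dimensional space. The finitely many sequences with \(h\ge h_0\) cause no problem. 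This is total boundedness, uniform in \(t\).

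\emph{Equicontinuity in time.} I will estimate increments in \(B^{-1}\), or more simply control the low modes \(S_N\), and handle the high part with the tail bound above. For the stress, Corollary~\ref{ep:stress-variation} already gives \(\sum_n\|\tau^{n+1}-\tau^n\|_{B^0}\le C\). That bound is not a modulus of continuity by itself, so I will work with \(S_N\) of the increments instead. From the scheme,
\[
\tau^{n+1}-\tau^n=(E_h-I)\tau^n+hF_n,
\]
and therefore \(\|S_N(\tau^{n+1}-\tau^n)\|_{B^0}\le C_N h(\|\tau^n\|_{B^0}+\|F_n\|_{B^0})\). Summing this over a time window \([s,t]\) and using the pointwise bound \(\|F_n\|_{B^0}\le C(1+t_{n+1}^{-1/2})\) from \eqref{ep:heat-forcing-bound} gives a total of at most \(C_N(|t-s|+|t-s|^{1/2})\). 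For the good vorticity, \eqref{eq:scheme-gamma} gives
\[
\Gamma^{n+1}-\Gamma^n=h\bigl(G^n-u^n\cdot\nabla\Gamma^{n+1}\bigr).
\]
Here I need \(\|G^n\|_{B^{-1}}\lesssim C_R(1+\|\tau^{n+1}\|_{B^{1/2}})\), which follows from \eqref{ep:positive-multiplier} and \eqref{ep:Rcomm}. I also need \(\|u^n\cdot\nabla\Gamma^{n+1}\|_{B^{-1}}=\|\Div(u^n\Gamma^{n+1})\|_{B^{-1}}\le C_R\), which follows from \eqref{ep:positive-multiplier}, since \(u^n\in B^1\) and \(B^1\cdot B^0\subset B^0\). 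With \eqref{ep:summable-positive-stress}, windowed sums give a modulus of the form \(C(|t-s|+|t-s|^{3/4})\). The stress-smoothing weight is integrable, so it yields a Hölder modulus through Hölder's inequality on discrete sums. Combining the low-mode modulus with the tail bound gives equicontinuity in \(B^0\) up to \(\varepsilon\) for small \(h\), and this is enough for Arzelà--Ascoli with an asymptotic modulus. Extending constantly past \(t_J\), and using \(Jh\to T_*\), keeps the limit constant after \(T_*\).

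\emph{One-sided interpolants and \eqref{ep:limit-bounds}.} The one-sided interpolants differ from the affine interpolant by at most \(\max_n\|f^{n+1}-f^n\|\). This maximum tends to zero: the low part is controlled by the increment estimates, which give \(O(C_N h^{1/2})\), and the high part by the tails. So the one-sided interpolants have the same limits.

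For the \(L^1B^2\) bound I will use \((E_h\tau)_h^{\mathrm L}\), whose \(L^1(0,T_*;B^2)\) norm is bounded by \eqref{ep:stress-variation-bound}. Since \(\|(I-E_h)\tau^n\|_{B^{-1}}\lesssim h^{1/2}\), this interpolant converges to \(\tau_*\) in \(C_tB^{-1}\). Lower semicontinuity of the \(L^1B^2\) norm under distributional convergence then gives the bound: apply Fatou's lemma blockwise to \(\int2^{2j}\|\Delta_j\cdot\|_\infty\), after pairing with test functions in time.

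The main obstacle is the equicontinuity near \(t=0\) and at the first step, where \(F_0\) and \(G^0\) are of size \(h^{-1/2}\). I will handle this by noting that a single step contributes only \(O(h^{1/2})\) to the low modes, and that the weights \(t_{n+1}^{-1/2}\) and \(t_n^{-1/4}\) have integrable singularities. Together these give a uniform Hölder modulus including at \(s=0\).
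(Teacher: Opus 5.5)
Your proof is correct, but it organizes the compactness argument differently from the paper.

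\textbf{The paper's route.} The paper never builds a modulus of continuity in \(B^0\). It proves the crude Lipschitz bound \eqref{ep:weak-increments} in \(B^{-2}\times B^{-1}\) using only \(L^\infty\) information: \(S^n\in L^\infty\), \(Q(\nabla u^n,\tau^{n+1})\in L^\infty\), and \(B^1\cdot B^0\subset B^0\) in divergence form. It then applies the classical Arzel\`a--Ascoli theorem in \(C([0,T];B^{-3})\) via \(B^0\Subset B^{-3}\). Fatou transfers the tail bound to the limit, and the low--high split \eqref{ep:frequency-upgrade} upgrades the convergence to \(B^0\).

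\textbf{Your route.} You run an asymptotic Arzel\`a--Ascoli argument directly in \(B^0\). The tails give total boundedness, and low-mode increments together with the tails give asymptotic equicontinuity. This reaches the target norm in one pass and needs no tail estimate for the limit.

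\textbf{Unnecessary smoothing.} Your increment bounds spend the stress smoothing, namely \(\|F_n\|_{B^0}\lesssim1+t_{n+1}^{-1/2}\) and \eqref{ep:summable-positive-stress}, where none is needed. The obstacle you flag at \(t=0\) is an artifact of measuring \(F_n\) and \(G^n\) in \(B^0\). In \(B^{-2}\) and \(B^{-1}\) the first step is \(O(h)\) like every other step; for example,
\[
\|\mathcal RQ(\nabla u^n,\tau^{n+1})\|_{B^{-1}}\lesssim\|\nabla u^n\|_\infty\|\tau^{n+1}\|_\infty .
\]
So your low-mode modulus could simply be Lipschitz.

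\textbf{Remaining steps.} Your treatment of the one-sided interpolants and the blockwise Fatou argument for \eqref{ep:limit-bounds} agree with the paper in substance.

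\textbf{One small correction.} The bound \eqref{ep:stress-variation-bound} controls \(\int_0^{t_J}\|E_h\tau_h^{\mathrm L}\|_{B^2}\,dt\), not the integral over \([0,T_*]\). If \(t_J<T_*\), the constant extension on \([t_J,T_*]\) is not covered, and \(T_*-t_J\) need not be \(O(h)\). As in the paper, apply Fatou on \([0,T']\) with \(T'<T_*\), so that eventually \(T'<t_J\), and then let \(T'\uparrow T_*\).
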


\begin{proof}
We first estimate time increments in weaker Besov norms:
\begin{equation}
 \|\tau^{n+1}-\tau^n\|_{B^{-2}}
 +\|\Gamma^{n+1}-\Gamma^n\|_{B^{-1}}\le C_Rh.
 \label{ep:weak-increments}
\end{equation}

Since \(B^0\hookrightarrow L^\infty\), \(\|S^n\|_\infty\le C_R\).
Write the stress update as
\[
 \tau^{n+1}-\tau^n
 =(E_h-I)\tau^n+hE_hS^n
       -h\operatorname{div}(u^n\otimes\tau^{n+1}).
\]
The first term has \(B^{-2}\) norm at most \(C_Rh\).
For the other terms, use \(L^\infty\hookrightarrow B^{-2}\) and
\(B^1\cdot B^0\subset B^0\). In the good-vorticity update,
\(Q(\nabla u^n,\tau^{n+1})\) is bounded in \(L^\infty\), hence
its image under \(\mathcal R\) is bounded in \(B^{-1}\).
Together with \eqref{ep:Rcomm} and the divergence form of transport,
this proves \eqref{ep:weak-increments}.

The affine stress and good vorticity are uniformly Lipschitz in
\(B^{-2}\) and \(B^{-1}\), respectively, including their constant
extensions. Their uniform \(B^0\) bounds, the compact embedding
\(B^0\Subset B^{-3}\) on the torus, and Arzel\`a--Ascoli give a
subsequence converging in \(C([0,T];B^{-3})\).
By \eqref{ep:weak-increments}, the piecewise constant interpolants
converge uniformly in \(B^{-3}\) to the same limits.

The tail bound \eqref{ep:tail-orbit-bound} also holds for the
affine and one-sided interpolants and their constant extensions.
For each fixed \(N\), convergence of finite dyadic sums followed
by Fatou's lemma gives
\[
 \sup_{t\in[0,T]}\mathcal T_N^0(f_*(t))\le C_{T,R}r_N,
 \qquad f=\tau,\Gamma.
\]
The first-cell term vanishes as \(h\to0\). A low--high frequency
split therefore yields
\begin{equation}
 \sup_{t\in[0,T]}\|f_h(t)-f_*(t)\|_{B^0}
 \le C_N\sup_{t\in[0,T]}\|f_h(t)-f_*(t)\|_{B^{-3}}
       +C_{T,R}\bigl(r_N+h\log(e/h)\bigr).
 \label{ep:frequency-upgrade}
\end{equation}
Given \(\varepsilon>0\), first choose \(N\) so that
\(C_{T,R}r_N<\varepsilon/2\), and then choose \(h\) small enough
to control the \(B^{-3}\) error and \(h\log(e/h)\).
This proves uniform convergence in \(B^0\). The limits are
continuous as uniform limits of the affine interpolants.
The same split and \eqref{ep:weak-increments} give uniform
convergence of the one-sided interpolants to these limits.
Recovery with the common mean \(m_0\) proves the velocity assertion.
Since the extensions are constant after \(Jh\), their continuous
limits are constant after \(T_*\).

It remains to prove \(\tau_*\in L^1(0,T_*;B^2)\) using the bound on
\(E_h\tau^n\). On any \([0,T']\subset[0,T_*)\),
\(E_h\tau_h^{\mathrm{L}}\to\tau_*\) uniformly in \(B^0\): indeed,
\[
 \|E_h\tau_h^{\mathrm{L}}-\tau_*\|_{B^0}
 \le C\|\tau_h^{\mathrm{L}}-\tau_*\|_{B^0}
        +\|(E_h-I)\tau_*\|_{B^0},
\]
and strong continuity of the heat semigroup is uniform on the compact
range of \(\tau_*\). The second term in
\eqref{ep:stress-variation-bound} gives
\[
 \int_0^{Jh}\|E_h\tau_h^{\mathrm{L}}(t)\|_{B^2}\,dt\le C_{T,R}.
\]
Apply Fatou's lemma to each finite sum of dyadic \(B^2\) seminorms,
then let the number of blocks and \(T'\) increase. This proves
\eqref{ep:limit-bounds}. No smoothing estimate is used on the
artificial constant extensions.
\end{proof}

It remains to identify these limits as solutions. Strong critical
convergence permits passage through the nonlinear products; testing
against smooth functions identifies the discrete heat increment with
the stress diffusion and damping.

\begin{lemma}[Identification of the limit]
\label{ep:limit-identification}
If \(T_*>0\), the limit in Proposition~\ref{ep:weak-compactness} solves
the original Oldroyd--B system on \([0,T_*]\), with the prescribed
initial data, and belongs to
\begin{equation}
 \begin{aligned}
 u_*&\in C([0,T_*];B^1),\qquad
 \Gamma_*\in C([0,T_*];B^0),\\
 \tau_*&\in C([0,T_*];B^0)\cap L^1(0,T_*;B^2).
 \end{aligned}
 \label{ep:limit-class}
\end{equation}
\end{lemma}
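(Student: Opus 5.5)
The plan is to pass to the limit in the discrete equations written in weak (time-integrated, distributional) form, using the strong convergence of Proposition~\ref{ep:weak-compactness}. The regularity class \eqref{ep:limit-class} needs no further work. It follows directly from the uniform convergence in \(C([0,T];B^1)\times C([0,T];B^0)\times C([0,T];B^0)\) and from \eqref{ep:limit-bounds}, restricted to \([0,T_*]\). The initial data are attained because every interpolant equals \((u^0,\tau^0,\Gamma^0)\) at \(t=0\). The recovery identity \(u_*=m_0+\mathcal K(\Gamma_*+\mathcal R\tau_*)\) passes to the limit by continuity of \(\mathcal K\) and \(\mathcal R\). It shows that \(u_*\) is divergence free with mean \(m_0\).

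\textbf{Stress equation.} I would use the heat form \(\tau^{n+1}-\tau^n=(E_h-I)\tau^n+hE_hS^n-h\,u^n\cdot\nabla\tau^{n+1}\). The last term is written as \(-h\Div(u^n\otimes\tau^{n+1})\). Fix a smooth test function \(\varphi\) in \(x\) and \(\psi\in C_c^\infty([0,T_*))\) in \(t\). Pair the increments with \(\varphi\), multiply by \(\psi(t_n)\), and sum by parts in \(n\); this gives \(\int\langle\tau_*,\varphi\rangle\psi'\,dt\) plus the initial term, up to \(o(1)\). For the heat increment, self-adjointness gives \(\langle(E_h-I)\tau^n,\varphi\rangle=\langle\tau^n,(E_h-I)\varphi\rangle\). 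Since \(h^{-1}(E_h-I)\varphi\to(\Delta-a)\varphi\) in \(C^\infty\) and \(\tau_h^{\mathrm L}\to\tau_*\) uniformly in \(B^0\), this term converges to \(\int\langle\tau_*,(\Delta-a)\varphi\rangle\psi\). The product terms are handled by strong convergence. The one-sided interpolants \(u_h^{\mathrm L}\to u_*\) in \(B^1\) and \(\tau_h^{\mathrm L},\tau_h^{\mathrm R}\to\tau_*\) in \(B^0\) converge uniformly, and \(B^1\cdot B^0\subset B^0\) via \eqref{ep:positive-multiplier} and \(B^0\hookrightarrow L^\infty\). Hence \(D(u^n)\), \(Q(\nabla u^n,\tau^n)\) and \(u^n\otimes\tau^{n+1}\) converge in \(L^\infty_tL^\infty_x\), since \(\nabla u\in B^0\subset L^\infty\). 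The heat factor in \(E_hS^n\) is again moved onto \(\varphi\). The first cell contributes \(O(h)\) after pairing with the smooth \(\varphi\), so the \(h^{-1/2}\) loss in \(\|E_hS^0\|_{B^0}\) is harmless in the weak formulation.

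\textbf{Good-vorticity equation.} The analogous pairing gives \(\partial_t\Gamma_*+u_*\cdot\nabla\Gamma_*=G(u_*,\tau_*)\) in distributions. The terms \(a\mathcal R\tau^n\), \(\mathcal RD(u^n)\) and \(\Div(u^n\Gamma^{n+1})\) converge as above. For the commutator, \eqref{ep:Rcomm} is bilinear in \((v,f)\), so strong convergence in \(B^1\times B^0\) gives convergence in \(B^0\). The term \(\mathcal RQ(\nabla u^n,\tau^{n+1})\) is a product of \(B^0\) factors, hence bounded and convergent in \(L^\infty\), and \(\mathcal R\) maps \(L^\infty\) continuously into \(B^{-1}\), which suffices against smooth tests.

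\textbf{Main obstacle: recovering the velocity equation.} I would take the difference of the curl of \eqref{eq:vorticity} and \eqref{eq:Rtau-raw} in reverse, using the commutator identity of Section~\ref{sec:good-vorticity}, now for the limits. The needed manipulations hold in distributions since \(\tau_*\in L^1B^2\). This gives \(\partial_t\omega_*+u_*\cdot\nabla\omega_*=\curl\Div\tau_*\) with \(\omega_*=\curl u_*\). This says that \(\partial_tu_*+u_*\cdot\nabla u_*-\Div\tau_*\) is curl free. Its mean is zero: \(\langle u_*\rangle=m_0\) is constant, \(\langle u_*\cdot\nabla u_*\rangle=\langle\Div(u_*\otimes u_*)\rangle=0\), and \(\langle\Div\tau_*\rangle=0\). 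A mean-zero curl-free field on \(\T\) is a gradient \(-\nabla p\), which yields \eqref{eq:u}. The care needed is to justify the product rule for \(\mathcal R(u_*\cdot\nabla\tau_*)\) at this low regularity. I would handle it by mollifying in space and using \eqref{ep:Rcomm} to pass to the limit in the commutator.
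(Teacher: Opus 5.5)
Your proposal is correct and follows essentially the same route as the paper. You pass to the limit in the distributional form of the discrete stress and good-vorticity updates, using the uniform critical convergence, ordinary $L^\infty$ products for $Q$, and heat operators moved onto smooth test functions. You then recombine the two limit equations into the vorticity equation and recover \eqref{eq:u} with a distributional pressure.

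The only difference is cosmetic. The paper handles the commutator through the identity $[\mathcal R,u\cdot\nabla]\tau=\mathcal R\Div(u\otimes\tau)-\Div(u\otimes\mathcal R\tau)$ instead of the bilinear bound \eqref{ep:Rcomm}. With that identity, the ``product rule'' you flag as the main obstacle holds by definition, so the extra mollification is unnecessary.
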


\begin{proof}
On every compact time interval inside \([0,T_*)\), the affine
interpolants satisfy in distributions
\begin{align*}
 \partial_t\tau_h+\operatorname{div}(u_h^{\mathrm{L}}\otimes\tau_h^{\mathrm{R}})
 &=\frac{E_h-I}{h}\tau_h^{\mathrm{L}}+E_hS_h^{\mathrm{L}},\\
 \partial_t\Gamma_h+\operatorname{div}(u_h^{\mathrm{L}}\Gamma_h^{\mathrm{R}})&=G_h^{\mathrm{L}}.
\end{align*}
Here \(S_h^{\mathrm{L}}=S^n\) and \(G_h^{\mathrm{L}}=G^n\) on \([t_n,t_{n+1})\).
Critical convergence implies uniform convergence of
\(u_h^{\mathrm{L}},\nabla u_h^{\mathrm{L}},\tau_h^{\mathrm{L}},\tau_h^{\mathrm{R}},\Gamma_h^{\mathrm{R}}\) in \(L^\infty_x\).
In particular,
\[
 Q(\nabla u_h^{\mathrm{L}},\tau_h^{\eta})
 \longrightarrow Q(\nabla u_*,\tau_*)
 \quad\text{in }L^\infty_{t,x},\qquad
 \eta\in\{\mathrm{L},\mathrm{R}\}.
\]
This uses ordinary products in \(L^\infty\), not a \(B^0\) algebra
estimate. Transfer the heat operators to smooth test functions to pass
to the limit in \(E_hS_h^{\mathrm{L}}\) and identify the limit of
\((E_h-I)\tau_h^{\mathrm{L}}/h\) as \((\Delta-a)\tau_*\).

The transport products converge directly. For the commutator, use
\[
 [\mathcal R,u\cdot\nabla]\tau
 =\mathcal R\operatorname{div}(u\otimes\tau)
       -\operatorname{div}(u\otimes\mathcal R\tau).
\]
Both products converge, since \(\mathcal R\) is bounded on \(B^0\).
The limiting equations are therefore \eqref{eq:tau} and
\eqref{eq:gamma}. Recovery, the common mean, and the initial values
also pass to the limit. Combining these two equations yields the
vorticity equation; divergence-free recovery then gives
\eqref{eq:u} with a distributional pressure. The stated regularity,
including continuity at \(T_*\), was already proved in
Proposition~\ref{ep:weak-compactness}.
\end{proof}

\subsection{Critical convergence and continuation}
\label{ep:continuation-section}

The limit-identification lemma and uniqueness for the continuous system
show that every subsequential limit is the prescribed exact solution.
For continuation, we need this convergence to hold uniformly over all
prefix lengths satisfying the same bound \eqref{ep:prefix}.

\begin{proposition}[Conditional convergence on arbitrary prefixes]
\label{ep:conditional-convergence}
Let \((u,\tau)\) satisfy \eqref{ep:exact}, and let
\(\Gamma=\operatorname{curl}u-\mathcal R\tau\).
For every fixed \(R\), there is \(\varepsilon_R(h)\to0\) such that
every exactly initialized numerical prefix satisfying
\eqref{ep:prefix} obeys
\begin{equation}
 \max_{0\le n\le J}
 \left(\|u^n-u(t_n)\|_{B^1}
       +\|\tau^n-\tau(t_n)\|_{B^0}
       +\|\Gamma^n-\Gamma(t_n)\|_{B^0}\right)
 \le\varepsilon_R(h).
 \label{ep:conditional-critical-error}
\end{equation}
The estimate is uniform in the prefix length \(J\); no rate is asserted.
\end{proposition}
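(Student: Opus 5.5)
We argue by contradiction, combining the compactness of Proposition~\ref{ep:weak-compactness}, the identification in Lemma~\ref{ep:limit-identification}, and uniqueness from Lemma~\ref{ep:uniqueness}. Define \(\varepsilon_R(h)\) as the supremum of the left side of \eqref{ep:conditional-critical-error} over all exactly initialized prefixes with step \(h\), any length \(J\) with \(Jh\le T\), satisfying \eqref{ep:prefix}; set it to zero if no such prefix exists. This is finite by \eqref{ep:prefix}, recovery, and the exact-solution bounds. If \(\varepsilon_R(h)\not\to0\), there are \(\eta>0\), steps \(h_k\to0\), prefixes of lengths \(J_k\), and nodes \(n_k\le J_k\) with error at least \(\eta\) at \(n_k\). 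Passing to a subsequence, we may assume \(J_kh_k\to T_*\in[0,T]\) and \(n_kh_k\to s_*\in[0,T_*]\).

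\emph{Case \(T_*>0\).} Proposition~\ref{ep:weak-compactness} gives a further subsequence along which the affine and one-sided interpolants converge uniformly in \(C([0,T];B^1\times B^0\times B^0)\) to \((u_*,\tau_*,\Gamma_*)\). By Lemma~\ref{ep:limit-identification}, this limit solves the system on \([0,T_*]\) with the prescribed data and lies in the class \eqref{ep:limit-class}. Lemma~\ref{ep:uniqueness}, applied on \([0,T_*]\), gives \((u_*,\tau_*)=(u,\tau)\) there, and hence \(\Gamma_*=\Gamma\). The nodal value at \(n_k\) equals the interpolant at \(t_{n_k}\le J_kh_k\), so
\[
\|\tau^{n_k}-\tau(t_{n_k})\|_{B^0}\le\sup_t\|\tau_{h_k}-\tau_*\|_{B^0}+\|\tau_*(t_{n_k})-\tau(t_{n_k})\|_{B^0}.
\]
If \(t_{n_k}\le T_*\), the second term vanishes. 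Otherwise \(t_{n_k}\in(T_*,J_kh_k]\), and the second term tends to zero by uniform continuity of \(\tau\) on \([0,T]\) together with the constancy of \(\tau_*\) after \(T_*\), since \(\tau_*(T_*)=\tau(T_*)\) and \(J_kh_k\to T_*\). The velocity and good-vorticity terms are handled the same way. This contradicts the lower bound \(\eta\).

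\emph{Case \(T_*=0\).} Here we do not invoke the identification lemma; the limit is the constant initial datum. Proposition~\ref{ep:weak-compactness} still gives uniform convergence of the interpolants to a limit that is constant after \(0\), so it equals the initial data. Alternatively, the weak increments \eqref{ep:weak-increments} and the tail bound \eqref{ep:uniform-tail-bound} show directly that \(\max_{n\le J_k}\|\tau^n-\tau_0\|_{B^0}\to0\), and likewise for \(\Gamma\) and \(u\). Continuity of the exact solution at \(t=0\) then contradicts \(\eta>0\).

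The main obstacle is uniformity in \(J\): the prefixes have varying lengths and may end before \(T\). The constant extensions in Proposition~\ref{ep:weak-compactness} are built for this purpose, and the key point is that nodes beyond \(T_*\) still lie within \(J_kh_k\to T_*\). We also have to check that the compactness constants depend only on \(R\) and \(T\), not on \(J\); this holds because every prior estimate was stated uniformly over admissible prefixes.
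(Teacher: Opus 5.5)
Your proof is correct and follows essentially the same route as the paper: a subsequence contradiction built on the compactness of Proposition~\ref{ep:weak-compactness}, identification and uniqueness when \(T_*>0\), the constant initial datum when \(T_*=0\), and time continuity of the exact solution to bridge the gap between \(T_*\) and \(J_kh_k\). Tracking a single bad node \(n_k\), rather than the full nodal maximum that the paper controls via the exact solution extended constantly after \(J_kh_k\), is only a cosmetic difference.
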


\begin{proof}
Take any sequence of admissible prefixes with \(h_k\to0\).
After extraction, \(J_kh_k\to T_*\in[0,T]\), and
Proposition~\ref{ep:weak-compactness} gives uniform critical convergence
of their constant extensions on \([0,T]\).
If \(T_*>0\), identification and uniqueness show that the limit
is the prescribed exact solution on \([0,T_*]\), extended constantly
after \(T_*\).
If \(T_*=0\), the weak increment bound identifies the limit as the
constant initial datum, with the same conclusion.
By time continuity, the exact solution extended constantly after
\(J_kh_k\) converges uniformly in the critical norms to this same limit.
Consequently the maximum nodal error along the extracted sequence
tends to zero. A subsequence contradiction proves the assertion
uniformly over all admissible \(J\).
\end{proof}

We now show that the numerical bound holds through \(T\). The one-step
estimate bounds how far the first iterate exceeding the chosen threshold
could lie above it; conditional convergence then rules out such an
iterate by comparison with the exact solution.

\begin{proof}[Proof of Theorem~\ref{thm:critical-convergence}]
Set
\[
 L=\sup_{0\le t\le T}
   \bigl(|m_0|+\|\tau(t)\|_{B^0}+\|\Gamma(t)\|_{B^0}\bigr),
 \qquad R=L+1.
\]
Choose \(h\) so that Lemma~\ref{ep:one-step} applies at threshold
\(R\), its norm increase is less than one, and
\(\varepsilon_{R+1}(h)<1\) in
Proposition~\ref{ep:conditional-convergence}.
Starting from \(X_0<R\), construct successive nodes while \(X_n\le R\).

If \(j\) were the first node with \(t_j\le T\) and \(X_j>R\),
Lemma~\ref{ep:one-step} would construct this node with \(X_j\le R+1\).
The entire prefix through \(j\) would then satisfy the threshold
\(R+1\), so conditional convergence would give
\[
 X_j\le |m_0|+\|\tau(t_j)\|_{B^0}+\|\Gamma(t_j)\|_{B^0}
                +\varepsilon_{R+1}(h)
 \le L+\varepsilon_{R+1}(h)<R,
\]
a contradiction. Thus every node up to \(T\) is uniquely defined
and satisfies \(X_n\le R\). Applying
\eqref{ep:conditional-critical-error} to the full interval proves
Theorem~\ref{thm:critical-convergence}.
\end{proof}

\section{Algebraic Convergence in the Base Solution Class}
\label{sec:quantitative-proofs}

We prove Theorem~\ref{thm:base-algebraic} by first estimating the error
on an existing numerical interval satisfying a fixed Besov bound.
Lemma~\ref{lem:exponential-reconstruction} constructs a continuous path
through the numerical nodes. After a spatial cutoff, this path satisfies
a forced Oldroyd--B system. Mixed stability and the forcing estimate
then give an error bound in $B^0\times B^{-1}$; see
Lemma~\ref{lem:coupled-mixed-continuous-stability} and
Proposition~\ref{prop:coupled-reconstruction-forcing}. Choosing the cutoff
scale $h^{-1/2}$ and interpolating gives the critical-norm rate in
Proposition~\ref{prop:conditional-convergence}. Finally,
Proposition~\ref{prop:exact-interval-continuation} proves the required
numerical bound and extends the orbit through $T$.

\subsection{Conditional setting and exponential reconstruction}
\label{sec:common-error-identities}

We use \(u(t_n),\tau(t_n),\Gamma(t_n)\) for the exact samples and
\(u^n,\tau^n,\Gamma^n\) for the numerical iterates.
The errors \(e_f^n\), \(f=u,\tau,\Gamma\),
are those of Theorem~\ref{thm:base-algebraic}. We use its uniform grid
\(h=T/M\), \(t_n=nh\), and write \(I_n=[t_n,t_{n+1}]\).
The exact and numerical velocity means agree, so
\begin{equation}
 e_u^n=\mathcal K(e_\Gamma^n+\mathcal Re_\tau^n),
 \qquad \Div e_u^n=0.
 \label{eq:common-error-recovery}
\end{equation}
The multiplier bounds in Lemma~\ref{lem:multiplier-bookkeeping} give,
for any \(r\in\mathbb R\) for which the right-hand side is finite,
\begin{equation}
 \|e_u^n\|_{B^{r+1}}
 \le C_r\bigl(\|e_\Gamma^n\|_{B^r}+\|e_\tau^n\|_{B^r}\bigr).
 \label{eq:common-error-recovery-bound}
\end{equation}

Fix \(0<\alpha\le1\), an exact solution satisfying
\eqref{eq:endpoint-exact-class} on \([0,T]\), and \(R>0\).
Until the continuation argument in
Section~\ref{sec:exact-interval-continuation}, assume that the numerical
iterates have been constructed from the exact initial data for
\(0\le n\le J\), where \(Jh\le T\), and satisfy
\begin{equation}
 \max_{0\le n\le J}X_n\le R,\qquad
 X_n:=|m_0|+\|\tau^n\|_{B^\alpha}+\|\Gamma^n\|_{B^\alpha}.
 \label{eq:conditional-prefix-bound}
\end{equation}
Constants in the conditional estimates may depend on \(T,R,\alpha,a,b\)
and the stated exact-solution norms, but never on \(J\) or \(h\).
The step size is sufficiently small in terms of these fixed quantities.
No existence or numerical bound beyond \(t_J\) is assumed.
Write \(S(v,z)=D(v)-Q(\nabla v,z)\),
\(S^n=S(u^n,\tau^n)\), and
\(G^n=G(u^n,\tau^n)+J^n\), as in \eqref{eq:source-delay}.
Recovery and the positive-index product estimates give
\[
 \max_{n\le J}\|u^n\|_{B^{1+\alpha}}
 +\max_{n<J}\bigl(\|S^n\|_{B^\alpha}+\|G^n\|_{B^\alpha}\bigr)
 \le C_R.
\]
If \(J=0\), every error estimate below is immediate, so in their proofs
we may take \(J\ge1\).

Within each time interval, the reconstructed stress is the heat evolution
of \(\tau^n\) plus a correction that matches \(\tau^{n+1}\).
The reconstruction satisfies the continuous equations with additional
forcing terms, called defects below. This allows us to estimate the
nodal errors using the mixed stability lemma.

\begin{lemma}[Reconstruction and defect identities]
\label{lem:exponential-reconstruction}
Assume \eqref{eq:conditional-prefix-bound}. Set
\begin{equation}
 \zeta^n=\tau^{n+1}-E_h\tau^n
       =h\bigl(E_hS^n-u^n\cdot\nabla\tau^{n+1}\bigr).
 \label{eq:reconstruction-stress-correction}
\end{equation}
For \(0\le\ell\le h\), define
\begin{equation}
\begin{aligned}
 z(t_n+\ell)&=E_\ell\tau^n+\frac{\ell}{h}\zeta^n,\\
 \gamma(t_n+\ell)&=\Gamma^n+\frac{\ell}{h}(\Gamma^{n+1}-\Gamma^n),\qquad
 v=m_0+\mathcal K(\gamma+\mathcal Rz).
\end{aligned}
 \label{eq:exponential-reconstruction}
\end{equation}
These paths match the numerical values at every node. They satisfy
\(v\in C([0,t_J];B^{1+\alpha})\) and
\(z,\gamma\in C([0,t_J];B^\alpha)\), with
\begin{equation}
 \sup_{0\le t\le t_J}
 \bigl(\|v(t)\|_{B^{1+\alpha}}+\|z(t)\|_{B^\alpha}
                              +\|\gamma(t)\|_{B^\alpha}\bigr)\le C_R,
 \qquad
 \|\zeta^n\|_{B^{\alpha-1}}+\|\Gamma^{n+1}-\Gamma^n\|_{B^{\alpha-1}}
 \le C_Rh .
 \label{eq:reconstruction-weak-increments}
\end{equation}
Moreover, \(z\) is symmetric, \(\operatorname{div}v=0\),
\(\langle v\rangle=m_0\), and \(\langle\gamma\rangle=0\).

Define the distributional defects
\[
 d_\tau=z_t-(\Delta-a)z-S(v,z)+v\cdot\nabla z,\qquad
 d_\Gamma=\gamma_t+v\cdot\nabla\gamma-G(v,z).
\]
On \(I_n\), write \(\ell=t-t_n\). Then
\begin{align}
 d_\tau={}&-\frac{\ell}{h}(\Delta-a)\zeta^n
          +(E_h-I)S^n+S^n-S(v,z)\notag\\
          &+v\cdot\nabla z-u^n\cdot\nabla\tau^{n+1},
          \label{eq:reconstruction-stress-defect}\\
 d_\Gamma={}&G^n-G(v,z)
                 +v\cdot\nabla\gamma-u^n\cdot\nabla\Gamma^{n+1}.
          \label{eq:reconstruction-gamma-defect}
\end{align}
The reconstructed velocity satisfies
\begin{equation}
 v_t+\Pa(v\cdot\nabla v)-\Pa\Div z
       =\mathcal K(d_\Gamma+\mathcal Rd_\tau).
 \label{eq:reconstruction-momentum-defect}
\end{equation}
\end{lemma}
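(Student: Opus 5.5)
The proof is mostly bookkeeping, and I will do it in four steps: node matching, regularity and increment bounds, the algebraic defect identities, and the momentum defect.

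\emph{Node matching.} At \(\ell=0\) the definition gives \(z(t_n)=\tau^n\) and \(\gamma(t_n)=\Gamma^n\). At \(\ell=h\) it gives \(z=E_h\tau^n+\zeta^n=\tau^{n+1}\) and \(\gamma=\Gamma^{n+1}\). Hence the cellwise definitions agree at shared nodes, and the paths are continuous. The formula for \(\zeta^n\) in \eqref{eq:reconstruction-stress-correction} is just \eqref{eq:scheme-tau-good} rewritten as \(\tau^{n+1}=E_h\tau^n+h(E_hS^n-u^n\cdot\nabla\tau^{n+1})\). Symmetry of \(z\) follows from the symmetry of \(\tau^n\), \(\tau^{n+1}\) and of the scalar heat semigroup. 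The identities \(\Div v=0\), \(\langle v\rangle=m_0\) and \(\langle\gamma\rangle=0\) follow from recovery and from the preserved zero mean of \(\Gamma^n\) noted after \eqref{eq:scheme-gamma}.

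\emph{Regularity and increment bounds.} By \eqref{eq:conditional-prefix-bound}, the heat contraction \eqref{eq:E-bookkeeping} and \(\|\tau^{n+1}\|_{B^\alpha}\le R\), the path \(z=(1-\ell/h)E_\ell\tau^n+(\ell/h)(\tau^{n+1}-E_h\tau^n)+\ell/h\,E_\ell\tau^n\) is bounded in \(B^\alpha\) by \(C_R\). The simplest route is to write \(z=E_\ell\tau^n+(\ell/h)(\tau^{n+1}-E_h\tau^n)\) and bound each term by \(C R\). The bound on \(\gamma\) is immediate, and \eqref{eq:K-bookkeeping} gives \(v\) in \(B^{1+\alpha}\). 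Continuity in time follows from strong continuity of \(E_\ell\) on \(B^\alpha\), which I expect to hold on the range relevant here by \(\tau^n\in B^\alpha\) with summability index one.

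For the increments, \(h\|E_hS^n\|_{B^{\alpha-1}}\le Ch\|S^n\|_{B^\alpha}\le C_Rh\). Next, \(u^n\cdot\nabla\tau^{n+1}=\Div(u^n\otimes\tau^{n+1})\) lies in \(B^{\alpha-1}\) with norm at most \(C\|u^n\otimes\tau^{n+1}\|_{B^\alpha}\le C_R\) by the positive-index algebra. For \(\Gamma^{n+1}-\Gamma^n=h(G^n-u^n\cdot\nabla\Gamma^{n+1})\), the same argument applies, using the \(B^\alpha\) bound on \(G^n\) and Lemma~\ref{lem:transport-stability} for \(\Gamma^{n+1}\).

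\emph{Defect identities.} These are pure algebra. Differentiate on \(I_n\) to get \(z_t=(\Delta-a)E_\ell\tau^n+\zeta^n/h\). Since \(E_\ell\tau^n=z-(\ell/h)\zeta^n\), this becomes \(z_t=(\Delta-a)z-(\ell/h)(\Delta-a)\zeta^n+E_hS^n-u^n\cdot\nabla\tau^{n+1}\). Subtracting \(S(v,z)-v\cdot\nabla z\) and writing \(E_hS^n=(E_h-I)S^n+S^n\) gives \eqref{eq:reconstruction-stress-defect}. Similarly, \(\gamma_t=(\Gamma^{n+1}-\Gamma^n)/h=G^n-u^n\cdot\nabla\Gamma^{n+1}\) by \eqref{eq:scheme-gamma}, which gives \eqref{eq:reconstruction-gamma-defect}. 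All identities hold in distributions, since each term lies in \(B^{\alpha-2}\).

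\emph{Momentum defect; the main obstacle.} The delicate point is \eqref{eq:reconstruction-momentum-defect}: I must reverse the derivation in Section~\ref{sec:good-vorticity} with defects present. I set \(\omega=\gamma+\mathcal Rz=\curl v\), using the zero means. Applying \(\mathcal R\) to the stress defect equation and using \(\Delta\mathcal R=\curl\Div\) and the commutator splitting of \(\mathcal R(v\cdot\nabla z)\) gives \(\partial_t\mathcal Rz+v\cdot\nabla\mathcal Rz=\curl\Div z+\mathcal R(S(v,z))-a\mathcal Rz-[\mathcal R,v\cdot\nabla]z+\mathcal Rd_\tau\). Adding the \(\gamma\)-equation, all terms of \(G(v,z)\) cancel exactly as in \eqref{eq:gamma}, leaving \(\omega_t+v\cdot\nabla\omega=\curl\Div z+d_\Gamma+\mathcal Rd_\tau\). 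Both sides have zero mean. Indeed, \(d_\Gamma\) and \(\mathcal Rd_\tau\) have zero mean because \(\gamma_t\), \(G\) and the transport divergences do. Applying \(\mathcal K\) and using that \(v_t\) has zero mean (since \(\langle v\rangle=m_0\) is constant) gives \(v_t+\mathcal K\curl(v\cdot\nabla v)-\mathcal K\curl\Div z=\mathcal K(d_\Gamma+\mathcal Rd_\tau)\). Here I use \(\curl(v\cdot\nabla v)=v\cdot\nabla\omega\) for divergence-free \(v\), together with the periodic identity \(\mathcal K\curl w=\Pa w-\langle w\rangle\). The extra mean term is \(\langle v\cdot\nabla v\rangle=\langle\Div(v\otimes v)\rangle=0\) and \(\langle\Div z\rangle=0\), which yields \eqref{eq:reconstruction-momentum-defect}. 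Checking the sign conventions of \(\mathcal K\) and \(\Pa\) in this last identity is the step I would verify most carefully.
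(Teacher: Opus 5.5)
Your proposal is correct and follows essentially the same route as the paper's proof. It obtains the weak increment bounds from the divergence form of transport and the positive-index product law, and the uniform bounds from heat contraction. It uses cellwise differentiation, in which \(E_\ell\tau^n\) cancels in \(z_t-(\Delta-a)z\). It combines \(\mathcal R\) applied to the stress defect with the \(\gamma\)-equation to obtain \(\omega_t+v\cdot\nabla\omega=\curl\Div z+d_\Gamma+\mathcal Rd_\tau\), and finishes with Hodge recovery via \(\mathcal K\curl w=\Pa w-\langle w\rangle\), which you verified correctly. The only point to state explicitly, as the paper does, is that continuity of \(z,\gamma\) at the nodes means their distributional time derivatives on \([0,t_J]\) carry no jump measures, so the cellwise formulas give the global defects.
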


\begin{proof}
The scheme gives the formula for \(\zeta^n\) and
\(\Gamma^{n+1}-\Gamma^n=hG^n-hu^n\cdot\nabla\Gamma^{n+1}\).
Since transport is the divergence of a positive-index product,
\[
 \|u^n\cdot\nabla\tau^{n+1}\|_{B^{\alpha-1}}
 +\|u^n\cdot\nabla\Gamma^{n+1}\|_{B^{\alpha-1}}\le C_R.
\]
This proves the weak increment bounds. Also
\(\|\zeta^n\|_{B^\alpha}\le2R\) by heat contraction.
The uniform reconstruction bounds, node matching, symmetry, and mean
conditions now follow directly from the definitions.

Differentiating on each cell proves the defect identities; the heat
evolution of \(\tau^n\) cancels exactly in \(z_t-(\Delta-a)z\).
Continuity across the nodes rules out jump measures. Finally,
\(\omega=\gamma+\mathcal Rz=\curl v\) satisfies
\[
 \omega_t+v\cdot\nabla\omega
   =\curl\Div z+d_\Gamma+\mathcal Rd_\tau .
\]
The same cancellation as in the exact good-vorticity equation,
together with the fixed velocity mean, gives
\eqref{eq:reconstruction-momentum-defect} by periodic Hodge recovery.
\end{proof}

\subsection{Coupled stability in mixed Besov norms}
\label{sec:coupled-optimal}

The operator \(\mathcal K\) maps the defects from \(B^{-1}\) to
\(B^0\), suggesting that we measure the velocity error in \(B^0\)
and the stress error in \(B^{-1}\). Stress diffusion also controls
the time integral of \(\|z-\tau\|_{B^1}\), which bounds the forcing
\(\Div(z-\tau)\) in the velocity difference equation.

\begin{lemma}[Mixed Besov stability for a forced comparison]
\label{lem:coupled-mixed-continuous-stability}
Let \(0<\alpha\le1\), \(a\ge0\), \(b\in[-1,1]\), and \(T>0\).
Let \((u,\tau)\) solve the Oldroyd--B system and let \((v,z)\) solve
\begin{align*}
 \partial_tv+v\cdot\nabla v+\nabla\pi
     &=\Div z+f,\qquad \Div v=0,\\
 \partial_tz+v\cdot\nabla z+az+Q(\nabla v,z)
     &=D(v)+\Delta z+g.
\end{align*}
Assume that both velocities are divergence free and that
\[
 M:=1+\|u\|_{L^\infty_TB^{1+\alpha}}
       +\|v\|_{L^\infty_TB^{1+\alpha}}
       +\|\tau\|_{L^\infty_TB^\alpha}<\infty,
 \qquad
 L:=\|\tau\|_{L^1_TB^1}<\infty.
\]
Set \(w=v-u\) and \(Z=z-\tau\).
For the a priori calculation, assume
\[
 w\in C([0,T];B^0),\qquad
 Z\in C([0,T];B^{-1})\cap L^1(0,T;B^1),
\]
and assume the difference equations hold distributionally, with
\(f\in L^1_TB^0\), \(g\in L^1_TB^{-1}\).
These qualitative regularities hold when the comparison path is
spatially filtered and piecewise smooth in time and the exact solution
has the regularity in \eqref{eq:endpoint-exact-class}.
Then
\begingroup
\allowdisplaybreaks[0]
\begin{align}
 &\|w\|_{L^\infty_TB^0}
   +\|Z\|_{L^\infty_TB^{-1}}
   +\|Z\|_{L^1_TB^1}\notag\\
 &\qquad\le C_{\alpha,b,T,M,L}
 \left(\|w(0)\|_{B^0}+\|Z(0)\|_{B^{-1}}
        +\|f\|_{L^1_TB^0}+\|g\|_{L^1_TB^{-1}}\right).
 \label{eq:coupled-mixed-stability}
\end{align}
\endgroup
The constant is independent of the size of the assumed
\(L^1(0,T;B^1)\) norm of \(Z\); the estimate controls that norm itself.
No separate smoothing bound for the comparison stress \(z\) enters
the constant.
\end{lemma}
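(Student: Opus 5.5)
We write the difference system for \((w,Z)\), estimate \(w\) in \(B^0\) by transport and \(Z\) in \(B^{-1}\) by a parabolic maximal-regularity estimate, and close with Gronwall. Subtracting the equations gives
\begin{align*}
 \partial_tw+v\cdot\nabla w+\nabla q&=-w\cdot\nabla u+\Div Z+f,\\
 \partial_tZ+v\cdot\nabla Z+aZ-\Delta Z&=D(w)-w\cdot\nabla\tau-Q(\nabla v,Z)-Q(\nabla w,\tau)+g .
\end{align*}
Because the transport velocity \(v\) is divergence free and bounded in \(B^{1+\alpha}\subset B^1\), the dyadic commutator estimate \eqref{eq:critical-tame-transport} applies at \(r=0\) and at \(r=-1/2\). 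At \(r=-1\) it fails, so for \(Z\) we use instead the form \(v\cdot\nabla Z=\Div(v\otimes Z)\).

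\emph{Velocity.} We project with \(\Pa\). Then \(\Pa\) is bounded on \(B^0\), and the identity \(\Pa(w\cdot\nabla u)=\Pa\Div(w\otimes u)\) is not even needed, since \(\|w\cdot\nabla u\|_{B^0}\lesssim\|w\|_{B^0}\|\nabla u\|_{B^\alpha}\) by \eqref{ep:positive-multiplier}. The commutator \([\Pa,v\cdot\nabla]w\) is handled as in \eqref{ep:Rcomm}. The dyadic transport estimate then gives
\[
 \|w(t)\|_{B^0}\le e^{CMt}\Bigl(\|w(0)\|_{B^0}+\int_0^t\bigl(\|Z\|_{B^1}+\|f\|_{B^0}\bigr)\Bigr).
\]
Here \(\Div Z\) costs \(\|Z\|_{B^1}\), which is exactly the quantity the stress diffusion will supply.

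\emph{Stress.} We localize with \(\Delta_j\) and use the localized heat kernel for \(\partial_t-\Delta+a\). For each block this gives
\[
 2^{-j}\|\Delta_jZ(t)\|_\infty+2^{j}\int_0^t\|\Delta_jZ\|_\infty\le C\Bigl(2^{-j}\|\Delta_jZ(0)\|_\infty+\int_0^t 2^{-j}\|\Delta_j F\|_\infty\Bigr),
\]
with the low block treated separately. Summing over \(j\) yields
\[
 \|Z\|_{L^\infty_tB^{-1}}+\|Z\|_{L^1_tB^1}\lesssim\|Z(0)\|_{B^{-1}}+\|F\|_{L^1_tB^{-1}} .
\]
We bound each piece of the forcing \(F\) in \(B^{-1}\).
\begin{itemize}
 \item Transport: \(\|\Div(v\otimes Z)\|_{B^{-1}}\lesssim\|v\otimes Z\|_{B^0}\lesssim M\|Z\|_{B^0}\).
 \item Shear term: \(\|Q(\nabla v,Z)\|_{B^{-1}}\lesssim\|Q(\nabla v,Z)\|_{B^{-1/2}}\lesssim\|\nabla v\|_{B^\alpha}\|Z\|_{B^{-1/2}}\), using the negative-index product law with \(\alpha>1/2\). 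When \(\alpha\le1/2\) we instead use \(\|\nabla v\|_{B^\alpha}\|Z\|_{B^{-\alpha/2}}\), which is valid since \(\alpha-\alpha/2>0\).
 \item Intermediate norm: \(\|Z\|_{B^{-\alpha/2}}\le\|Z\|_{B^{-1}}^{\theta}\|Z\|_{B^1}^{1-\theta}\le\varepsilon\|Z\|_{B^1}+C_\varepsilon\|Z\|_{B^{-1}}\). The \(\varepsilon\)-part is absorbed into the left side.
 \item Velocity terms: \(D(w)\in B^{-1}\) costs \(\|w\|_{B^0}\), and \(w\cdot\nabla\tau=\Div(w\otimes\tau)\) costs \(\|w\otimes\tau\|_{B^0}\lesssim\|w\|_{B^0}\|\tau\|_{B^\alpha}\).
\end{itemize}

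\emph{The term \(Q(\nabla w,\tau)\), the main obstacle.} Here \(\nabla w\in B^{-1}\) multiplies \(\tau\). A product \(B^{-1}\cdot B^\alpha\) into \(B^{-1}\) requires \(\alpha>1\) in the resonant term, so it fails. We use Bony's decomposition instead.
\begin{itemize}
 \item \(T_{\nabla w}\tau\) and the remainder are bounded in \(B^{-1}\), in fact in \(B^{\alpha-1}\) up to the borderline, by \(\|w\|_{B^0}\|\tau\|_{B^1}\).
 \item \(T_\tau\nabla w\) is bounded by \(\|\tau\|_\infty\|w\|_{B^0}\).
\end{itemize}
Thus \(\|Q(\nabla w,\tau)\|_{B^{-1}}\lesssim(\|\tau\|_{B^\alpha}+\|\tau\|_{B^1})\|w\|_{B^0}\). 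The factor \(\|\tau\|_{B^1}\) is only integrable in time, which is why \(L\) appears in the constant; this product is the delicate point.

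\emph{Gronwall.} Let \(Y(t)=\|w\|_{L^\infty_tB^0}+\|Z\|_{L^\infty_tB^{-1}}\). Combining the two estimates gives
\[
 Y(t)+\|Z\|_{L^1_tB^1}\le C\Bigl(\text{data}+\int_0^t(M+\|\tau\|_{B^1})Y+\int_0^t\|Z\|_{B^1}\Bigr).
\]
The last term has no small coefficient, so we close by working on short intervals \([t_0,t_0+\delta]\), with \(\delta\) depending only on \(M\). On such an interval \(\int\|Z\|_{B^1}\) enters the velocity bound linearly while the stress bound controls it. We substitute the stress bound into the velocity bound and use Gronwall with the integrable weight \(M+\|\tau\|_{B^1}\). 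Concatenating finitely many intervals, their number depending on \(T,M,L\), gives \eqref{eq:coupled-mixed-stability}.

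The qualitative regularity assumed in the statement makes every quantity finite, so the absorption is legitimate. The constant involves \(v\) only through \(M\), consistent with the final sentence of the statement.
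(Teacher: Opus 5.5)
Your proof is correct and is essentially the paper's argument: a $B^0$ transport estimate for $w$ after Leray projection, the $B^{-1}\to L^1_tB^1$ heat estimate for $Z$ with the lower-order $Z$ terms absorbed by interpolation, the derivative on $w$ in $Q(\nabla w,\tau)$ paired with the integrable $\|\tau\|_{B^1}$ through $B^1\cdot B^{-1}\subset B^{-1}$, and then Gronwall. Your Bony computation is exactly this endpoint law, so the resonant term already works at $\alpha=1$ and not only for $\alpha>1$. The remaining deviations are harmless: you treat $\Pa(v\cdot\nabla w)$ via the commutator $[\Pa,v\cdot\nabla]$ where the paper uses $\Div(v\cdot\nabla w)=\Div(w\cdot\nabla v)$, and the short-interval splitting is unnecessary, since multiplying the stress bound by a large constant and adding it to the velocity bound absorbs $\int_0^t\|Z\|_{B^1}$ directly.
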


\begin{proof}
We use \eqref{ep:positive-multiplier} with \(\sigma=\alpha\), together with
\begin{equation}
 \|FG\|_{B^{-\rho}}
   \le C_\rho\|F\|_{B^\rho}\|G\|_{B^{-\rho}},\qquad \rho>0.
 \label{eq:coupled-zero-sum-product}
\end{equation}
For \eqref{eq:coupled-zero-sum-product}, the paraproduct terms follow
from the usual low-frequency estimates. For the high--high remainder, take
\(A_k=2^{\rho k}\|\Delta_kF\|_\infty\) and
\(B_k=2^{-\rho k}\|\widetilde\Delta_kG\|_\infty\).
The remainder is bounded by
\[
 C\sum_{k\ge-1}A_kB_k\sum_{-1\le j\le k+C}2^{-\rho j}
 \le C_\rho\|A\|_{\ell^1}\|B\|_{\ell^1}.
\]
In particular, the endpoint product estimate
\(B^1\cdot B^{-1}\subset B^{-1}\) is available, although \(B^0\)
itself is not an algebra.

\emph{Velocity: remove the pressure without losing a derivative.}
Since \(v,w\) are divergence free,
\(\Div(v\cdot\nabla w)=\Div(w\cdot\nabla v)\).
Subtracting the momentum equations and eliminating the pressure gives
\begin{equation}
 \partial_tw+v\cdot\nabla w
 =\Pa\Div Z-w\cdot\nabla u
       +(I-\Pa)\bigl(w\cdot\nabla(u+v)\bigr)+\Pa f.
 \label{eq:coupled-mixed-momentum}
\end{equation}
Apply the componentwise \(B^0\) transport estimate and
\eqref{ep:positive-multiplier}. Since the Leray multipliers
are bounded on \(B^0\), for every \(0\le t\le T\),
\begin{equation}
 \|w\|_{L^\infty_tB^0}
 \le \|w(0)\|_{B^0}
   +C_\alpha M\int_0^t\|w(s)\|_{B^0}\,ds
   +C_0\int_0^t\|Z(s)\|_{B^1}\,ds
   +C\|f\|_{L^1_tB^0}.
 \label{eq:coupled-mixed-velocity-bound}
\end{equation}
Only \(\|v\|_{B^1}\) is needed for the transport commutator.
No maximum principle for the Leray projection is used.

\emph{Stress: use diffusion to absorb the lower-order terms.}
The stress difference satisfies
\begin{align}
 \partial_tZ-(\Delta-a)Z
  ={}&-v\cdot\nabla Z-w\cdot\nabla\tau
      -Q(\nabla v,Z)-Q(\nabla w,\tau)
      +D(w)+g.
 \label{eq:coupled-mixed-stress}
\end{align}
Treat its right-hand side as heat forcing. Divergence form and the
two product laws give
\begin{align*}
 \|v\cdot\nabla Z\|_{B^{-1}}
    +\|Q(\nabla v,Z)\|_{B^{-1}}
 &\le C_{\alpha,b}M
       \bigl(\|Z\|_{B^0}+\|Z\|_{B^{-\alpha}}\bigr),\\
 \|w\cdot\nabla\tau\|_{B^{-1}}
    +\|Q(\nabla w,\tau)\|_{B^{-1}}+\|D(w)\|_{B^{-1}}
 &\le C_{\alpha,b}(1+M+\|\tau\|_{B^1})\|w\|_{B^0}.
\end{align*}
In particular, the derivative on \(w\) is paired with the integrable
\(B^1\) norm of the exact stress. Interpolation and Young's inequality
give, for every fixed \(\varepsilon>0\),
\[
 M\bigl(\|Z\|_{B^0}+\|Z\|_{B^{-\alpha}}\bigr)
 \le \varepsilon\|Z\|_{B^1}
       +C_{\alpha,\varepsilon}(1+M^2)\|Z\|_{B^{-1}}.
\]
Here one interpolates \(B^{-1}\) and \(B^1\), with exponents
\(1/2\) for \(B^0\) and \((1+\alpha)/2\) on the weak norm for
\(B^{-\alpha}\); at \(\alpha=1\), the latter is already \(B^{-1}\).
The inhomogeneous heat estimate, with \(\varepsilon\) small enough,
therefore yields
\begin{align}
 \|Z\|_{L^\infty_tB^{-1}}+c_1\|Z\|_{L^1_tB^1}
 \le{}&C_1\|Z(0)\|_{B^{-1}}+C_1\|g\|_{L^1_tB^{-1}}\notag\\
 &+C_{\alpha,b}\int_0^t
   \Bigl((1+M^2)\|Z(s)\|_{B^{-1}}
       +(1+M+\|\tau(s)\|_{B^1})\|w(s)\|_{B^0}\Bigr)\,ds.
 \label{eq:coupled-mixed-stress-bound}
\end{align}
The lowest dyadic blocks contribute \(C\int_0^t\|Z\|_{B^{-1}}\),
already included above, so the estimate also covers \(a=0\) and
the constant stress mode.

Multiply \eqref{eq:coupled-mixed-stress-bound} by a fixed constant
large enough to absorb the \(C_0\|Z\|_{L^1_tB^1}\) term in
\eqref{eq:coupled-mixed-velocity-bound}. Adding the inequalities and
applying Gronwall with the integrable coefficient
\(C_{\alpha,b}(1+M^2+\|\tau(t)\|_{B^1})\) proves
\eqref{eq:coupled-mixed-stability}.
\end{proof}

\begin{remark}[Compatibility with the base exact class]
Interpolation of the exact stress norms gives
\[
 \|\tau\|_{L^1_TB^1}
 \le C T^{(1+\alpha)/2}
       \|\tau\|_{L^\infty_TB^\alpha}^{(1+\alpha)/2}
       \|\tau\|_{L^1_TB^{\alpha+2}}^{(1-\alpha)/2}.
\]
At \(\alpha=1\), use \(T\|\tau\|_{L^\infty_TB^1}\) directly.
Thus the lemma applies to the filtered reconstruction below, with
constants controlled by the stated exact norms and
\eqref{eq:conditional-prefix-bound}. The strict condition
\(\alpha>0\) enters \eqref{ep:positive-multiplier};
its constant need not remain bounded as \(\alpha\downarrow0\).
\end{remark}

\subsection{Discrete stress smoothing}

Lemma~\ref{ep:product-smoothing} gives a uniform bound on the \(h\)-weighted time sum
of the numerical \(B^{1+\alpha}\) stress norms. We use it below to
control both the reconstruction correction \(\zeta^n\) and the sum
of successive stress increments. The latter bound controls the source
correction \(J^n\) introduced by the sequential update.

\begin{proposition}[Integrated numerical stress regularity]
\label{cor:coupled-numerical-smoothing}
Under \eqref{eq:conditional-prefix-bound}, the numerical stress
iterates satisfy
\begin{equation}
 \|\tau^n\|_{B^{1+\alpha}}\le C_{T,R,\alpha}(t_n^{-1/2}+1)
 \quad(1\le n\le J),\qquad
 h\sum_{n=1}^J\|\tau^n\|_{B^{1+\alpha}}\le C_{T,R,\alpha}.
 \label{eq:coupled-integrated-stress}
\end{equation}
For \(\zeta^n=\tau^{n+1}-E_h\tau^n\), one also has
\begin{equation}
 h\sum_{n=0}^{J-1}\|\zeta^n\|_{B^\alpha}\le C_{T,R,\alpha}h.
 \label{eq:coupled-integrated-correction}
\end{equation}
The stress also has uniformly bounded total variation in \(B^\alpha\):
\begin{equation}
 \sum_{n=0}^{J-1}\|\tau^{n+1}-\tau^n\|_{B^\alpha}
 \le C_{T,R,\alpha}.
 \label{eq:coupled-stress-variation}
\end{equation}
The constants are independent of \(J,h\). No \(B^{1+\alpha}\)
norm of the initial stress is required.
\end{proposition}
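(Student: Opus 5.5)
The plan is to apply the forced split-recurrence bounds \eqref{eq:forced-split-smoothing}--\eqref{eq:forced-split-integrated} with \(s=1+\alpha\) and \(q=\alpha\), so \(\beta=1/2\), to the stress step \eqref{ep:stress-step}. We take \(v^n=u^n\), \(z^n=\tau^n\) and \(g^n=S^n\). Under \eqref{eq:conditional-prefix-bound}, recovery gives \(\sup_n\|u^n\|_{B^{1+\alpha}}\le C_R\), so Lemma~\ref{ep:product-smoothing} applies with \(s=1+\alpha\in(1,2]\). The positive-index product law gives \(\|S^n\|_{B^\alpha}\le C_R\) for every \(n\), including \(n=0\), because \(\tau^0\in B^\alpha\) by exact initialization. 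Then \eqref{eq:forced-split-smoothing} yields
\[
 \|\tau^n\|_{B^{1+\alpha}}\le C\Bigl(t_n^{-1/2}\|\tau^0\|_{B^\alpha}+h\sum_{k<n}(t_n-t_k)^{-1/2}C_R\Bigr)\le C(t_n^{-1/2}+1),
\]
since \(h\sum_{m=1}^n(mh)^{-1/2}\le2\sqrt{T}\). Summing gives the integrated bound in \eqref{eq:coupled-integrated-stress}, because \(h\sum_n t_n^{-1/2}\le 2\sqrt T\). Only \(\tau^0\in B^\alpha\) is used here, as the statement asserts.

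For \eqref{eq:coupled-integrated-correction}, I would use \(\zeta^n=h(E_hS^n-u^n\cdot\nabla\tau^{n+1})\). For \(\alpha<1\), the positive-index product gives \(\|u^n\cdot\nabla\tau^{n+1}\|_{B^\alpha}\lesssim\|u^n\|_{B^{1+\alpha}}\|\tau^{n+1}\|_{B^{1+\alpha}}\). At \(\alpha=1\), I would use \(\|u\cdot\nabla\tau\|_{B^1}\lesssim\|u\|_{B^2}\|\tau\|_{B^2}\) instead. This gives
\[
 \|\zeta^n\|_{B^\alpha}\le Ch\bigl(1+\|\tau^{n+1}\|_{B^{1+\alpha}}\bigr).
\]
Multiplying by \(h\) and summing with the previous bound, now indexed at \(n+1\ge1\) so the initial node is never needed, gives \(h\sum_n\|\zeta^n\|_{B^\alpha}\le Ch\bigl(T+C\bigr)\).

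For the variation bound \eqref{eq:coupled-stress-variation}, I would rewrite the step as the heat recurrence \(\tau^{n+1}=E_h\tau^n+hF_n\) with \(F_n=E_hS^n-u^n\cdot\nabla\tau^{n+1}\), exactly as in \eqref{ep:heat-forcing-definition}. Lemma~\ref{ep:heat-maximal} with \(r=\alpha\) then bounds \(\sum_n\|\tau^{n+1}-\tau^n\|_{B^\alpha}\) by \(C\bigl(\|\tau^0\|_{B^\alpha}+h\sum_n\|F_n\|_{B^\alpha}\bigr)\). We have \(h\|F_n\|_{B^\alpha}=\|\zeta^n\|_{B^\alpha}\), so the previous step gives \(h\sum_n\|F_n\|_{B^\alpha}\le C\), and the forcing is integrable.

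The main obstacle is ensuring the hypotheses of Lemma~\ref{ep:product-smoothing} hold with constants independent of \(J\). The step restriction there depends on \(R\) only through \(\sup\|u^n\|_{B^{1+\alpha}}\), so this should be harmless. A second, milder point is the transport product at the endpoint \(\alpha=1\), where the tame \(B^1\) product must be used, as noted above. Everything else is summation of \(t^{-1/2}\) kernels.
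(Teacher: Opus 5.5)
Your proposal is correct and follows the paper's proof essentially step for step: the forced split-recurrence bound with $s=1+\alpha$, $q=\alpha$ (so $\beta=1/2$) gives both estimates in \eqref{eq:coupled-integrated-stress}, then $\|\zeta^n\|_{B^\alpha}\le Ch(1+\|\tau^{n+1}\|_{B^{1+\alpha}})$ gives \eqref{eq:coupled-integrated-correction}, and Lemma~\ref{ep:heat-maximal} at $r=\alpha$, with $h\|F_n\|_{B^\alpha}=\|\zeta^n\|_{B^\alpha}$, gives the variation bound. Two small remarks: the paper states explicitly a step you leave implicit, namely that for $n\ge1$ the $B^{1+\alpha}$ solution of the transport solve coincides with the given $B^\alpha$ iterate by uniqueness in $B^\alpha$; and your separate treatment of $\alpha=1$ is unnecessary, because $B^\alpha_{\infty,1}$ is an algebra for every $\alpha>0$, so the same product bound already covers the endpoint.
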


\begin{proof}
Heat smoothing and the transport resolvent estimate at index
\(1+\alpha\) give \(\tau^n\in B^{1+\alpha}\) for every \(n\ge1\).
Uniqueness of the transport solve in \(B^\alpha\) shows that these
more regular solutions coincide with the given iterates.
Apply \eqref{eq:forced-split-smoothing}--\eqref{eq:forced-split-integrated}
with \(v^n=u^n\), \(z^n=\tau^n\), \(g^n=S^n\),
\(s=1+\alpha\), \(q=\alpha\), and \(\delta=1/2\).
The uniform \(B^\alpha\) source bound and
\(h\sum_{j=1}^n(jh)^{-1/2}\le2\sqrt{t_n}\) give both estimates in
\eqref{eq:coupled-integrated-stress}. Finally the numerical step gives
\[
 \zeta^n=h(E_hS^n-u^n\cdot\nabla\tau^{n+1}),\qquad
 \|\zeta^n\|_{B^\alpha}
 \le C_{T,R,\alpha}h(1+\|\tau^{n+1}\|_{B^{1+\alpha}}).
\]
Multiplication by \(h\) and summation prove
\eqref{eq:coupled-integrated-correction}.
For the variation bound, set
\[
 F_n=E_hS^n-u^n\cdot\nabla\tau^{n+1},\qquad
 \tau^{n+1}=E_h\tau^n+hF_n.
\]
The same source and transport estimates give
\(h\sum_{n<J}\|F_n\|_{B^\alpha}\le C_{T,R,\alpha}\).
Lemma~\ref{ep:heat-maximal}, applied at \(r=\alpha\), therefore proves
\eqref{eq:coupled-stress-variation}.
\end{proof}

\subsection{Conditional error estimate}
\label{sec:conditional-error}

\paragraph{Spatial cutoff.}
Before applying mixed stability, we apply a spatial Fourier cutoff to
the reconstruction so that its defects have the required regularity.
We then choose the cutoff scale to balance the resulting spatial error
against the time-discretization defects and estimate the difference
between the filtered and unfiltered paths. We use the
paths \((v,z,\gamma)\) in \eqref{eq:exponential-reconstruction}, which
match every numerical node, and the defect identities
\eqref{eq:reconstruction-stress-defect}--%
\eqref{eq:reconstruction-momentum-defect}.

Choose a fixed real even function
\(\chi\in C_c^\infty(\mathbb R^2)\), equal to one on the unit ball
and zero outside the ball of radius two, and let
\(P_\Lambda=\chi(D/\Lambda)\), \(\Lambda\ge1\).
Its periodic convolution kernel has integral one, uniformly bounded
\(L^1\) norm, and, for every fixed \(q>0\),
\[
 \int_{\mathbb T^2}|K_\Lambda(y)|d(y,0)^q\,dy
 \le C_q\Lambda^{-q},
\]
where \(d\) is the periodic distance.  These bounds follow by
periodizing the rescaled Schwartz kernel of \(\chi(D)\).
The cutoff is used only in the analysis.  Set
\(v_\Lambda=P_\Lambda v\), \(z_\Lambda=P_\Lambda z\).
The defect identities give
\begin{align}
 (v_\Lambda)_t+\Pa(v_\Lambda\cdot\nabla v_\Lambda)
 &=\Pa\Div z_\Lambda+f_\Lambda,\notag\\
 (z_\Lambda)_t+v_\Lambda\cdot\nabla z_\Lambda+az_\Lambda
                  +Q(\nabla v_\Lambda,z_\Lambda)
 &=D(v_\Lambda)+\Delta z_\Lambda+g_\Lambda,
 \label{eq:coupled-filtered-system}
\end{align}
where
\begingroup
\allowdisplaybreaks[0]
\begin{align}
 f_\Lambda={}&\mathcal K(P_\Lambda d_\Gamma+\mathcal RP_\Lambda d_\tau)
 +\Pa\{v_\Lambda\cdot\nabla v_\Lambda-P_\Lambda(v\cdot\nabla v)\},\notag\\
 g_\Lambda={}&P_\Lambda d_\tau
 +v_\Lambda\cdot\nabla z_\Lambda-P_\Lambda(v\cdot\nabla z)\notag\\
 &+Q(\nabla v_\Lambda,z_\Lambda)-P_\Lambda Q(\nabla v,z).
 \label{eq:coupled-forces}
\end{align}
\endgroup

Filtering also changes nonlinear products. The following estimate
quantifies this error, using the extra derivative on velocity or the
time-integrated extra derivative on stress.

\begin{lemma}[A product-cutoff estimate with one extra derivative]
\label{lem:coupled-product-cutoff}
Let \(0<\alpha\le1\), and let \(P_\Lambda\), \(\Lambda\ge1\), be the
fixed smooth Fourier cutoff used in the analysis above. Define
\[
 \mathcal C_\Lambda(f,g)
   =P_\Lambda(fg)-(P_\Lambda f)(P_\Lambda g).
\]
Then
\begin{equation}
 \|\mathcal C_\Lambda(f,g)\|_{B^0}
 \le C_\alpha\Lambda^{-1-\alpha}
                  \|f\|_{B^{1+\alpha}}\|g\|_{B^\alpha}.
 \label{eq:coupled-cutoff-product}
\end{equation}
Consequently, for divergence-free \(v\in B^{1+\alpha}\),
\begin{equation}
\begin{aligned}
 \|P_\Lambda(v\cdot\nabla v)
       -(P_\Lambda v)\cdot\nabla(P_\Lambda v)\|_{B^0}
 &\le C_\alpha\Lambda^{-1-\alpha}\|v\|_{B^{1+\alpha}}^2,\\
 \|P_\Lambda(v\cdot\nabla z)
       -(P_\Lambda v)\cdot\nabla(P_\Lambda z)\|_{B^{-1}}
 &\le C_\alpha\Lambda^{-1-\alpha}
                  \|v\|_{B^{1+\alpha}}\|z\|_{B^\alpha}.
\end{aligned}
 \label{eq:coupled-cutoff-transport}
\end{equation}
The same product estimate, with its two inputs exchanged, gives
\begin{equation}
 \|P_\Lambda Q(\nabla v,z)
       -Q(\nabla P_\Lambda v,P_\Lambda z)\|_{B^0}
 \le C_{\alpha,b}\Lambda^{-1-\alpha}
                  \|v\|_{B^{1+\alpha}}\|z\|_{B^{1+\alpha}}.
 \label{eq:coupled-cutoff-constitutive}
\end{equation}
\end{lemma}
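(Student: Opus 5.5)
The plan is to reduce \eqref{eq:coupled-cutoff-product} to pieces of \(fg\) in which at least one factor has frequency \(\gtrsim\Lambda\). Interactions whose frequencies all lie well below \(\Lambda\) are reproduced exactly by \(P_\Lambda\), so they do not contribute to \(\mathcal C_\Lambda\).

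Fix \(N\) with \(2^N\sim\Lambda\), chosen so that \(\chi(\xi/\Lambda)=1\) on the Fourier support of \(S_{N+C}\) for a fixed overlap constant \(C\). First split
\[
 \mathcal C_\Lambda(f,g)=\bigl[P_\Lambda(fg)-f\,P_\Lambda g\bigr]+(I-P_\Lambda)f\cdot P_\Lambda g .
\]
For the second bracket I would use \eqref{ep:positive-multiplier} with \(\sigma=\alpha\). Since \(P_\Lambda\) has uniformly bounded \(L^1\) kernel, it is bounded on every \(B^r\) uniformly in \(\Lambda\). Since \(I-P_\Lambda\) only sees blocks with \(2^j\gtrsim\Lambda\),
\[
 \|(I-P_\Lambda)f\|_{B^0}\lesssim\Lambda^{-1-\alpha}\|f\|_{B^{1+\alpha}}.
\]
These two facts give the bound \(C_\alpha\Lambda^{-1-\alpha}\|f\|_{B^{1+\alpha}}\|g\|_{B^\alpha}\).

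The commutator \([P_\Lambda,f]g\) is handled by splitting \(f=S_Nf+(I-S_N)f\). The high part \((I-S_N)f\) is treated exactly like the second bracket, since \(\|(I-S_N)f\|_{B^0}\lesssim\Lambda^{-1-\alpha}\|f\|_{B^{1+\alpha}}\), and \(P_\Lambda\) is bounded on \(B^0\). For the low part \([P_\Lambda,S_Nf]g\), I write \(g=\sum_k\Delta_kg\) and note three cases.
\begin{itemize}
\item If \(2^k\le c\Lambda\), then \(S_Nf\,\Delta_kg\) and \(\Delta_kg\) both have spectrum inside the region where \(\chi(\cdot/\Lambda)=1\), so the commutator term vanishes.
\item If \(2^k\ge C\Lambda\), then \(S_Nf\,\Delta_kg\) has spectrum outside the support of \(\chi(\cdot/\Lambda)\), so both \(P_\Lambda(S_Nf\Delta_kg)\) and \(P_\Lambda\Delta_kg\) vanish.
\item Only the finitely many \(k\) with \(2^k\sim\Lambda\) remain.
\end{itemize}
For these remaining blocks I use the kernel form
\[
 [P_\Lambda,S_Nf]\Delta_kg(x)=\int K_\Lambda(y)\bigl(S_Nf(x-y)-S_Nf(x)\bigr)\Delta_kg(x-y)\,dy .
\]
The first-moment bound on \(K_\Lambda\) gives \(L^\infty\) norm at most \(C\Lambda^{-1}\|\nabla f\|_\infty\|\Delta_kg\|_\infty\lesssim\Lambda^{-1-\alpha}\|f\|_{B^{1+\alpha}}\|g\|_{B^\alpha}\). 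The output has spectrum in a ball of radius \(\lesssim\Lambda\) but also lies in a bounded number of frequency shells around \(\Lambda\), so its \(B^0\) norm is comparable to its \(L^\infty\) norm. This makes the estimate uniform, including at \(\alpha=1\), where \(B^2\) controls \(\|\nabla f\|_\infty\) without a logarithmic loss.

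The consequences then follow by choosing inputs. For the first estimate in \eqref{eq:coupled-cutoff-transport}, I write \(v\cdot\nabla v=\sum_iv_i\partial_iv\) and apply \eqref{eq:coupled-cutoff-product} with \(f=v_i\in B^{1+\alpha}\) and \(g=\partial_iv\in B^\alpha\), using \(\partial_iP_\Lambda=P_\Lambda\partial_i\). For the second, incompressibility of both \(v\) and \(P_\Lambda v\) gives \(v\cdot\nabla z=\Div(v\otimes z)\). Applying \eqref{eq:coupled-cutoff-product} with \(f=v_j\) and \(g=z\in B^\alpha\) gives a \(B^0\) bound, and the divergence then costs one derivative, landing in \(B^{-1}\). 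For \eqref{eq:coupled-cutoff-constitutive}, \(Q\) is a bilinear combination of products of entries of \(\nabla v\) and \(z\). I apply \eqref{eq:coupled-cutoff-product} with the roles exchanged, \(f=z_{ij}\in B^{1+\alpha}\) and \(g=\partial_kv_l\in B^\alpha\), and use \(\|\nabla v\|_{B^\alpha}\lesssim\|v\|_{B^{1+\alpha}}\); the constants depend on \(b\) only through the coefficients of \(Q\).

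I expect the main obstacle to be the low-frequency commutator step. The Fourier-support bookkeeping must be arranged so that only \(O(1)\) blocks with \(2^k\sim\Lambda\) survive, since a sum over all \(k\le N\) would lose a factor \(\log\Lambda\) or fail at \(\alpha=1\). I would therefore fix the constants in the choice of \(N\) carefully, using that \(\chi\equiv1\) on the unit ball and vanishes outside radius two, before running the kernel estimate.
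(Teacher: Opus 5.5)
Your proposal is correct, but you prove the main estimate \eqref{eq:coupled-cutoff-product} by a different route from the paper.

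\textbf{The paper's route.} The paper starts from the exact covariance identity
\[
 \mathcal C_\Lambda(f,g)=\int_{\mathbb T^2}K_\Lambda(y)\,\delta_yf\,\delta_yg\,dy-(P_\Lambda f-f)(P_\Lambda g-g),\qquad \delta_yf=f(\cdot-y)-f .
\]
It then uses the translation bounds \(\|\delta_yf\|_{B^\alpha}\lesssim d(y,0)\|f\|_{B^{1+\alpha}}\) and \(\|\delta_yg\|_{B^0}\lesssim d(y,0)^\alpha\|g\|_{B^\alpha}\), the \((1+\alpha)\)-th kernel moment, the tail bounds for \(I-P_\Lambda\), and the mixed product law. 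This needs no Fourier-support bookkeeping or case analysis. It splits the gain \(\Lambda^{-1-\alpha}\) symmetrically between the two factors. It also works for any kernel with integral one and these moment bounds, since the tail bounds themselves follow from \((P_\Lambda-I)f=\int K_\Lambda(y)\delta_yf\,dy\).

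\textbf{Your route.} You split off \((I-P_\Lambda)f\,P_\Lambda g\) and cut \(f\) at \(2^N\sim\Lambda\). You then use that \(\chi\equiv1\) on the unit ball and vanishes outside radius two. This shows that the low-frequency commutator \([P_\Lambda,S_Nf]\) acts nontrivially only on \(O(1)\) blocks of \(g\) with \(2^k\sim\Lambda\), where a first-moment bound suffices. This makes visible that only interactions at the cutoff scale contribute. The price is that the argument depends on the compact spectral support of \(\chi\) and on careful constant choices.

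\textbf{A point to pin down.} You assert that the surviving commutator output lies in an annulus \(|\xi|\sim\Lambda\). You need this to pass from \(L^\infty\) to \(B^0\) without a \(\log\Lambda\) loss. It requires the smallest surviving \(k\) to satisfy \(\tfrac34 2^k-\tfrac43 2^N\gtrsim\Lambda\). This holds once \(2^N\) is a sufficiently small fixed fraction of \(\Lambda\), since the case-one threshold is then at least a fixed multiple of \(\Lambda\) exceeding \(\tfrac{16}{9}2^N\). Fix that fraction before the case split. Bounded \(\Lambda\) is trivial by the uniform \(B^0\) bounds.

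Your derivations of \eqref{eq:coupled-cutoff-transport} and \eqref{eq:coupled-cutoff-constitutive} agree with the paper's.
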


\begin{proof}
All product estimates are componentwise.  We use the mixed product
bound \eqref{ep:positive-multiplier}.

Let \(K_\Lambda\) be the periodic kernel of \(P_\Lambda\), normalized
to have integral one, and put
\(\delta_yf(x)=f(x-y)-f(x)\).
The exact identity
\begin{equation}
 \mathcal C_\Lambda(f,g)
  =\int_{\mathbb T^2}K_\Lambda(y)\delta_yf\,\delta_yg\,dy
       -(P_\Lambda f-f)(P_\Lambda g-g)
 \label{eq:coupled-cutoff-covariance}
\end{equation}
holds whether or not the kernel is positive.  The localized kernel
bounds give, with periodic distance denoted by \(d\),
\[
 \int_{\mathbb T^2}|K_\Lambda(y)|d(y,0)^{1+\alpha}\,dy
       \le C_\alpha\Lambda^{-1-\alpha}.
\]
Translation and tail estimates give
\begin{align*}
 \|\delta_yf\|_{B^\alpha}
       &\le C d(y,0)\|f\|_{B^{1+\alpha}},&
 \|\delta_yg\|_{B^0}
       &\le C_\alpha d(y,0)^\alpha\|g\|_{B^\alpha},\\
 \|(I-P_\Lambda)f\|_{B^\alpha}
       &\le C\Lambda^{-1}\|f\|_{B^{1+\alpha}},&
 \|(I-P_\Lambda)g\|_{B^0}
       &\le C_\alpha\Lambda^{-\alpha}\|g\|_{B^\alpha}.
\end{align*}
Apply \eqref{ep:positive-multiplier} to the two products in
\eqref{eq:coupled-cutoff-covariance} to obtain
\eqref{eq:coupled-cutoff-product}.

Since \(P_\Lambda\) commutes with derivatives,
\[
 \partial_\ell\mathcal C_\Lambda(v_i,v_j)
  =\mathcal C_\Lambda(\partial_\ell v_i,v_j)
       +\mathcal C_\Lambda(v_i,\partial_\ell v_j).
\]
Use symmetry of \(\mathcal C_\Lambda\), \eqref{eq:coupled-cutoff-product}, and
\(\nabla v\in B^\alpha\).  Divergence form then proves the first
line of \eqref{eq:coupled-cutoff-transport}.  The second follows by
applying \eqref{eq:coupled-cutoff-product} to \(v_i,z\)
and using \(\Div:B^0\to B^{-1}\).
Finally, each component of \(Q\) is a linear combination of products
of a component of \(\nabla v\) and a component of \(z\).
Apply \eqref{eq:coupled-cutoff-product} with \(f=z\) and
\(g=\partial_\ell v_i\) to prove
\eqref{eq:coupled-cutoff-constitutive}.
\end{proof}

The product-cutoff estimate controls the spatial errors in
\(f_\Lambda,g_\Lambda\). Combining it with stress smoothing and the
reconstruction identities gives their total accumulated size.

\begin{proposition}[Mixed-norm reconstruction forcing]
\label{prop:coupled-reconstruction-forcing}
Let \(0<\alpha\le1\), put \(\beta=(1+\alpha)/2\), and use the
exponential reconstruction \eqref{eq:exponential-reconstruction}
and the forces \eqref{eq:coupled-forces} under
\eqref{eq:conditional-prefix-bound}.
Uniformly in the existing interval length \(J\) and in \(\Lambda\ge1\),
\begin{equation}
 \|f_\Lambda\|_{L^1(0,Jh;B^0)}
       +\|g_\Lambda\|_{L^1(0,Jh;B^{-1})}
 \le C_{T,R,\alpha}
       \bigl(h^\beta+h\Lambda^{1-\alpha}
                         +\Lambda^{-1-\alpha}\bigr).
 \label{eq:coupled-reconstruction-forcing}
\end{equation}
In particular, \(\Lambda\asymp h^{-1/2}\) makes the right-hand side
\(O(h^{(1+\alpha)/2})\).
\end{proposition}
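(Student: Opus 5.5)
We split \(f_\Lambda\) and \(g_\Lambda\) into defect contributions and cutoff commutators, and estimate each on every cell \(I_n\) before summing in \(n\).

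\emph{Cutoff commutators.} These are the terms \(\Pa\{v_\Lambda\cdot\nabla v_\Lambda-P_\Lambda(v\cdot\nabla v)\}\) and, in \(g_\Lambda\), the transport and constitutive differences. Lemma~\ref{lem:coupled-product-cutoff} together with the uniform reconstruction bound \eqref{eq:reconstruction-weak-increments} controls the first two pointwise in time by \(C_R\Lambda^{-1-\alpha}\). The constitutive difference \eqref{eq:coupled-cutoff-constitutive} needs \(\|z\|_{B^{1+\alpha}}\). On \(I_n\) we have \(z=E_\ell\tau^n+(\ell/h)\zeta^n\). We bound \(\|E_\ell\tau^n\|_{B^{1+\alpha}}\) by heat smoothing of \(\|\tau^n\|_{B^{1+\alpha}}\) for \(n\ge1\), and on the first cell by \(\ell^{-1/2}\|\tau^0\|_{B^\alpha}\), which is integrable. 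For \(\zeta^n\) we use \(\|\zeta^n\|_{B^{1+\alpha}}\le\|\tau^{n+1}\|_{B^{1+\alpha}}+\|\tau^n\|_{B^{1+\alpha}}\). The integrated estimate \eqref{eq:coupled-integrated-stress} then gives \(\int_0^{Jh}\|z\|_{B^{1+\alpha}}\le C\). All cutoff commutators are therefore \(O(\Lambda^{-1-\alpha})\) in \(L^1_t\).

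\emph{Defects.} Since \(\mathcal K\) and \(\mathcal R\) gain and preserve regularity respectively, it suffices to bound \(P_\Lambda d_\tau\) and \(P_\Lambda d_\Gamma\) in \(L^1_tB^{-1}\); for \(d_\tau\) inside \(g_\Lambda\) the same norm is needed. We go through \eqref{eq:reconstruction-stress-defect} and \eqref{eq:reconstruction-gamma-defect} term by term.

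\begin{itemize}
\item For \(-(\ell/h)(\Delta-a)\zeta^n\), the cutoff costs at most \(\Lambda^{1-\alpha}\) when passing from \(B^{\alpha+1}\) to \(B^{-1}\) at frequencies up to \(\Lambda\). This gives \(\|P_\Lambda(\Delta-a)\zeta^n\|_{B^{-1}}\lesssim\Lambda^{1-\alpha}\|\zeta^n\|_{B^\alpha}\). Then \eqref{eq:coupled-integrated-correction} yields the total \(h\Lambda^{1-\alpha}\).
\item For \((E_h-I)S^n\), we use \eqref{eq:E-difference} with \(\rho=1+\alpha\) from \(B^\alpha\) to \(B^{-1}\), giving \(h^\beta\).
\item For \(S^n-S(v,z)\) and \(G^n-G(v,z)\), the differences \(v-u^n\) and \(z-\tau^n\) on \(I_n\) are small in weak norms. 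We have \(z-\tau^n=(E_\ell-I)\tau^n+(\ell/h)\zeta^n\); the first piece is \(O(h^{\beta})\) in \(B^{-1}\) and \(\zeta^n\) is \(O(h)\) in \(B^{\alpha-1}\). Similarly \(\gamma-\Gamma^n=O(h)\) in \(B^{\alpha-1}\). Using the negative-index product law \eqref{eq:coupled-zero-sum-product} against \(B^{1+\alpha}\) and \(B^\alpha\) factors, these source differences are \(O(h^\beta)\) in \(B^{-1}\).
\item The delayed term \(J^n\) inside \(G^n\) is \(\mathcal RQ(\nabla u^n,\tau^{n+1}-\tau^n)\). It is bounded in \(B^{-1}\), indeed in \(B^{\alpha-1}\), by the increment of \(\tau\). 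Summing with the weight \(h\) and using the variation bound \eqref{eq:coupled-stress-variation}, or more simply \(\|\tau^{n+1}-\tau^n\|_{B^{-1}}\lesssim h^\beta\), gives \(O(h^\beta)\).
\item The transport mismatches \(v\cdot\nabla z-u^n\cdot\nabla\tau^{n+1}\) and \(v\cdot\nabla\gamma-u^n\cdot\nabla\Gamma^{n+1}\) are written in divergence form. They are bounded in \(B^{-1}\) by \(\|v-u^n\|_{B^0}\|z\|_{B^\alpha}+\|u^n\|_{B^1}\|z-\tau^{n+1}\|_{B^{0}}\). The last factor is only \(O(h^{\beta})\) in \(B^{-1}\), so we should keep the product in \(B^{-1}\): apply \eqref{eq:coupled-zero-sum-product} with \(\rho=1\) to \(u^n\otimes(z-\tau^{n+1})\), using \(u^n\in B^{1+\alpha}\), to get \(\|z-\tau^{n+1}\|_{B^{-1}}\lesssim h^\beta\). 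Here \(z-\tau^{n+1}=(E_\ell-E_h)\tau^n-(1-\ell/h)\zeta^n\).
\end{itemize}

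\emph{Main obstacle.} The hardest part is the velocity forcing \(f_\Lambda\), which must be bounded in \(B^0\) rather than \(B^{-1}\). Here \(\mathcal K\) gains one derivative, so \(B^{-1}\) defect bounds suffice. However, the nonlinear terms in \(d_\Gamma\), such as \(\mathcal RQ\) and the commutator \([\mathcal R,\cdot]\), must be estimated at index \(-1\) without losing the \(h^\beta\) rate. We would handle them by writing every factor difference in divergence form and using \eqref{eq:coupled-zero-sum-product} at \(\rho=1\) with the good factor in \(B^1\subset B^{1+\alpha}\). Summing the three contributions gives \eqref{eq:coupled-reconstruction-forcing}.
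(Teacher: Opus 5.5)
\textbf{Verdict: genuine gap.} Your proposal does not control the transport mismatches or the constitutive part of the source differences at the required rate. The reason is that you only bound \(z-\tau^n\) and \(z-\tau^{n+1}\) pointwise in \(B^{-1}\).

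\emph{Stress transport.} Keeping \(u^n\otimes(z-\tau^{n+1})\) in \(B^{-1}\) via \eqref{eq:coupled-zero-sum-product} at \(\rho=1\) does not help. The defect is the divergence of this product, so it lands in \(B^{-2}\), not \(B^{-1}\). Even for \(u^n=m_0\), a \(B^{-1}\) bound on the increment only controls \(m_0\cdot\nabla(z-\tau^{n+1})\) in \(B^{-2}\). Paying for the lost derivative with the filter costs a factor \(\Lambda\), which gives only \(h^\beta\Lambda\asymp h^{\alpha/2}\).

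\emph{Constitutive difference.} In \(Q(\nabla u^n,z-\tau^n)\) the good factor is \(\nabla u^n\in B^\alpha\), not \(B^1\). Note also that the inclusion is \(B^{1+\alpha}\subset B^1\), not the reverse as written in your last paragraph. For \(\alpha<1\), the resonant part of \(B^\alpha\cdot B^{-1}\) has index sum \(\alpha-1<0\) and is unbounded. If you measure the increment in \(B^{-\alpha}\) instead, the uniform \(B^\alpha\) bound gives only \(\|(E_\ell-I)\tau^n\|_{B^{-\alpha}}\lesssim h^\alpha\), which is worse than \(h^\beta\). The same objection applies to your alternative \(B^{-1}\) bound for \(J^n\); only the variation route works there. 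It also applies to the commutator difference handled in divergence form.

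\emph{Missing ingredient.} You need a second use of discrete stress smoothing, namely the time-integrated \(B^0\) increment bound
\[
 \sum_n\int_{I_n}\bigl(\|z-\tau^n\|_{B^0}+\|z-\tau^{n+1}\|_{B^0}\bigr)\,dt\le Ch^\beta .
\]
This follows in three pieces:
\begin{enumerate}
\item On cells \(n\ge1\), apply \eqref{eq:E-difference} with \(r=0\), \(\rho=1+\alpha\), and sum against \(h\sum_{n\ge1}\|\tau^n\|_{B^{1+\alpha}}\le C\) from \eqref{eq:coupled-integrated-stress}.
\item On the first cell, use \(\rho=\alpha\), which contributes \(O(h^{1+\alpha/2})\).
\item The correction contributes \(O(h)\) by \eqref{eq:coupled-integrated-correction}.
\end{enumerate}
With this bound, \eqref{ep:positive-multiplier} places \(u^n\otimes(z-\tau^{n+1})\), \(Q(\nabla u^n,z-\tau^n)\) and the commutator products in \(L^1B^0\) with size \(h^\beta\). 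The divergence then lands in \(B^{-1}\) as needed.

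\emph{Good-vorticity transport.} You do not treat \(\Div\bigl(u^n(\gamma-\Gamma^{n+1})\bigr)\), and no smoothing is available for \(\Gamma\). Here you must measure the product in \(B^{\alpha-1}\), where the index sum is \(2\alpha>0\) and the product is \(O(h)\). You then pay \(\Lambda^{1-\alpha}\) for \(P_\Lambda:B^{\alpha-2}\to B^{-1}\). This term, and not only the stiff stress term, is why \(h\Lambda^{1-\alpha}\) appears in the \(d_\Gamma\) bound. Your route (product in \(B^{-1}\), then a derivative) would cost \(h\Lambda\), which is only \(h^{1/2}\) at the balancing scale.

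\emph{Minor slip.} For \(n=0\), bound \(\|\zeta^0\|_{B^{1+\alpha}}\) through \(\|E_h\tau^0\|_{B^{1+\alpha}}\lesssim h^{-1/2}\|\tau^0\|_{B^\alpha}\). The hypothesis \(\tau^0\in B^{1+\alpha}\) is not available.
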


\begin{proof}
Work on \([0,t_J]\), with \(\ell=t-t_n\) on \(I_n\).
All constants below are independent of \(J,h,\Lambda\).

\emph{Reconstruction estimates.}
The one-derivative bounds
\eqref{eq:coupled-integrated-stress} control the heat part of \(z\)
on all cells with \(n\ge1\). On the first cell,
\[
 \int_0^h\|E_\ell\tau^0\|_{B^{1+\alpha}}\,d\ell
       \le C(h+\sqrt h)\|\tau^0\|_{B^\alpha}.
\]
For the correction use
\(\|\zeta^n\|_{B^{1+\alpha}}
 \le\|\tau^{n+1}\|_{B^{1+\alpha}}+
       \|E_h\tau^n\|_{B^{1+\alpha}}\),
then the same nodal bounds and heat smoothing of \(\tau^0\).
Together they give
\begin{equation}
 \|z\|_{L^1(0,Jh;B^{1+\alpha})}\le C.
 \label{eq:coupled-reconstruction-high-stress}
\end{equation}
No \(B^{1+\alpha}\) assumption on \(\tau^0\) is needed.

Apply \eqref{eq:E-difference} with \(r=-1\) and \(\rho=1+\alpha\),
and use \eqref{eq:reconstruction-weak-increments}. Uniformly on each
time interval, this gives
\[
 \|z-\tau^n\|_{B^{-1}}
       +\|z-\tau^{n+1}\|_{B^{-1}}\le Ch^\beta.
\]
Using velocity recovery and the good-vorticity increment in \eqref{eq:reconstruction-weak-increments},
we obtain
\begin{equation}
 \sup_{t\in I_n}\|v(t)-u^n\|_{B^0}\le Ch^\beta.
 \label{eq:coupled-reconstruction-velocity-increment}
\end{equation}
For the stress differences in \(B^0\), apply
\eqref{eq:E-difference} with \(r=0\) and \(\rho=1+\alpha\) on
intervals with \(n\ge1\), then sum using
\eqref{eq:coupled-integrated-stress}. On the first interval, only
\(\tau^0\in B^\alpha\) is available, so we use \(\rho=\alpha\);
its time integral is at most \(Ch^{1+\alpha/2}\). The correction
contributes \(O(h)\) by \eqref{eq:coupled-integrated-correction}.
The same argument applies to the difference from either endpoint, since
\(z-\tau^{n+1}=(E_\ell-E_h)\tau^n+(\ell/h-1)\zeta^n\).
Consequently,
\begin{equation}
 \sum_{n=0}^{J-1}\int_{I_n}
   \bigl(\|z-\tau^n\|_{B^0}
            +\|z-\tau^{n+1}\|_{B^0}\bigr)\,dt
       \le Ch^\beta.
 \label{eq:coupled-reconstruction-stress-increments}
\end{equation}

\emph{Stress defects.}
The term containing two spatial derivatives of \(\zeta^n\) satisfies
\begin{equation}
 \sum_n\int_{I_n}
 \left\|P_\Lambda\frac{\ell}{h}(\Delta-a)\zeta^n\right\|_{B^{-1}}dt
 \le C\Lambda^{1-\alpha}
                  h\sum_n\|\zeta^n\|_{B^\alpha}
 \le Ch\Lambda^{1-\alpha}.
 \label{eq:coupled-stiff-defect}
\end{equation}
The term \((E_h-I)S^n\) has norm \(O(h^\beta)\) in \(B^{-1}\).
To estimate the constitutive part of \(S^n-S(v,z)\), write
\[
 Q(\nabla v,z)-Q(\nabla u^n,\tau^n)
  =Q(\nabla(v-u^n),z)
       +Q(\nabla u^n,z-\tau^n).
\]
The first term is controlled in \(B^{-1}\) by
\[
 C_\alpha\|v-u^n\|_{B^0}\|z\|_{B^{1+\alpha}}.
\]
Indeed, multiplication by \(B^{1+\alpha}\) is bounded on \(B^{-1}\);
the input indices of the remainder sum to \(\alpha>0\).
Its time integral is \(O(h^\beta)\) by \eqref{eq:coupled-reconstruction-high-stress}--\eqref{eq:coupled-reconstruction-velocity-increment}.
The second term has an \(O(h^\beta)\) integral even in \(B^0\), by
\eqref{ep:positive-multiplier}, \eqref{eq:coupled-reconstruction-stress-increments}, and the uniform bound on \(\nabla u^n\) in \(B^\alpha\).
The linear difference \(D(v-u^n)\) is controlled by \eqref{eq:coupled-reconstruction-velocity-increment}.

For stress transport, use the divergence of
\[
 (v-u^n)\otimes z+u^n\otimes(z-\tau^{n+1}).
\]
The product estimate \eqref{ep:positive-multiplier}, \eqref{eq:coupled-reconstruction-velocity-increment}--\eqref{eq:coupled-reconstruction-stress-increments}, and the uniform nodal and
reconstruction bounds give an \(O(h^\beta)\) integral in \(B^{-1}\).
Together these facts prove
\begin{equation}
 \|P_\Lambda d_\tau\|_{L^1B^{-1}}
       \le C\bigl(h^\beta+h\Lambda^{1-\alpha}\bigr).
 \label{eq:coupled-stress-defect}
\end{equation}

\emph{Good-vorticity defects.}
The positive-index algebra law and
\eqref{eq:coupled-stress-variation} control the additional term
\(J^n\) in \eqref{eq:source-delay}, even in \(B^\alpha\):
\[
 h\sum_{n<J}\|J^n\|_{B^\alpha}
 \le C_{\alpha,R}h\sum_{n<J}
                 \|\tau^{n+1}-\tau^n\|_{B^\alpha}
 \le C_{T,R,\alpha}h.
\]
In particular,
\begin{equation}
 h\sum_{n<J}\|J^n\|_{B^0}
 \le C_{T,R,\alpha}h.
 \label{eq:delay-accumulated-positive}
\end{equation}
Thus \(J^n\) contributes \(O(h)\) to the time integral
of \(P_\Lambda d_\Gamma\) in \(B^{-1}\), independently of \(\Lambda\).
The preceding linear and constitutive source estimates also control
the corresponding terms of \(G(u^n,\tau^n)-G(v,z)\), because
\(\mathcal R\) is bounded on \(B^{-1}\).
The damping term uses the \(B^{-1}\) stress difference above.
For the commutator difference, split it as
\[
 [\mathcal R,(u^n-v)\cdot\nabla]z
       +[\mathcal R,u^n\cdot\nabla](\tau^n-z).
\]
For divergence-free \(b\),
\[
 [\mathcal R,b\cdot\nabla]f
   =\mathcal R\Div(b\otimes f)
                -\Div(b\otimes\mathcal R f).
\]
The first difference is therefore controlled in \(B^{-1}\) by
\(C_\alpha\|u^n-v\|_{B^0}\|z\|_{B^\alpha}\), and the second by
\(C_\alpha\|u^n\|_{B^\alpha}\|\tau^n-z\|_{B^0}\).
Their integrals are \(O(h^\beta)\).

For the transport difference, use
\[
 v\cdot\nabla\gamma-u^n\cdot\nabla\Gamma^{n+1}
  =\Div\bigl((v-u^n)\gamma\bigr)
       +\Div\bigl(u^n(\gamma-\Gamma^{n+1})\bigr).
\]
The first term has an \(O(h^\beta)\) integral in \(B^{-1}\) by
\eqref{ep:positive-multiplier} and \eqref{eq:coupled-reconstruction-velocity-increment}.  The second uses
\(\|\gamma-\Gamma^{n+1}\|_{B^{\alpha-1}}\le Ch\) from \eqref{eq:reconstruction-weak-increments}
and the multiplier bound
\[
 \|u^n(\gamma-\Gamma^{n+1})\|_{B^{\alpha-1}}
 \le C_\alpha\|u^n\|_{B^{1+\alpha}}
                  \|\gamma-\Gamma^{n+1}\|_{B^{\alpha-1}}.
\]
The two input indices of its remainder sum to \(2\alpha>0\).
Since \(P_\Lambda:B^{\alpha-2}\to B^{-1}\) has norm at most
\(C_\alpha\Lambda^{1-\alpha}\), its filtered time integral is at
most \(Ch\Lambda^{1-\alpha}\).  Therefore
\begin{equation}
 \|P_\Lambda d_\Gamma\|_{L^1B^{-1}}
       \le C\bigl(h^\beta+h\Lambda^{1-\alpha}\bigr).
 \label{eq:coupled-gamma-defect}
\end{equation}
No cancellation between the two separately estimated defects is required.

\emph{Spatial cutoff errors.}
Apply \eqref{eq:coupled-cutoff-transport} to the momentum and stress transport cutoff errors.
The uniform reconstruction norms bound their time integrals in
\(B^0\) and \(B^{-1}\), respectively, by \(C\Lambda^{-1-\alpha}\).
For the constitutive cutoff error, \eqref{eq:coupled-cutoff-constitutive} and \eqref{eq:coupled-reconstruction-high-stress} give
\[
 \|P_\Lambda Q(\nabla v,z)
       -Q(\nabla P_\Lambda v,P_\Lambda z)\|_{L^1B^0}
       \le C\Lambda^{-1-\alpha}.
\]
This also bounds its \(L^1B^{-1}\) norm.
Finally use \eqref{eq:coupled-forces},
\(\mathcal K:B^{-1}\to B^0\), and boundedness of
\(\mathcal R,\mathbb P,P_\Lambda\) on the indicated Besov spaces.
Equations \eqref{eq:coupled-stress-defect}--%
\eqref{eq:coupled-gamma-defect} and the cutoff bounds prove
\eqref{eq:coupled-reconstruction-forcing}.
\end{proof}

The forcing estimate now provides the input for mixed stability.
Choosing \(\Lambda\asymp h^{-1/2}\) balances the cutoff and defect
terms; removing the cutoff and interpolating gives the nodal error
bounds below.

\begin{proposition}[Conditional convergence on an existing grid interval]
\label{prop:conditional-convergence}
Under \eqref{eq:conditional-prefix-bound}, for sufficiently small \(h\),
\begin{align}
 \max_{0\le n\le J}
  \bigl(\|e_u^n\|_{B^0}+\|e_\tau^n\|_{B^{-1}}\bigr)
 &\le C_{T,R,\alpha}h^{(1+\alpha)/2},
 \label{eq:conditional-mixed-rate}\\
 \max_{0\le n\le J}
  \bigl(\|e_\tau^n\|_{B^{-\alpha}}
                    +\|e_\Gamma^n\|_{B^{-\alpha}}\bigr)
 &\le C_{T,R,\alpha}h^\alpha,
 \label{eq:conditional-weak-rate}\\
 \max_{0\le n\le J}
  \bigl(\|e_u^n\|_{B^1}+\|e_\tau^n\|_{B^0}
                    +\|e_\Gamma^n\|_{B^0}\bigr)
 &\le C_{T,R,\alpha}h^{\alpha/2}.
 \label{eq:conditional-critical-rate}
\end{align}
The constants and the step restriction are independent of \(J\).
\end{proposition}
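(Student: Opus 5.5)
We apply Lemma~\ref{lem:coupled-mixed-continuous-stability} to the filtered reconstruction $(v_\Lambda,z_\Lambda)$ of \eqref{eq:coupled-filtered-system}, compared with the exact solution $(u,\tau)$, and take $\Lambda= h^{-1/2}$. The qualitative hypotheses of that lemma hold: the filtered paths are spatially smooth and piecewise smooth in time, and the exact solution lies in \eqref{eq:endpoint-exact-class}. The constant $M$ is controlled by \eqref{eq:reconstruction-weak-increments} and the exact norms, since $P_\Lambda$ is uniformly bounded on every $B^r$. The constant $L$ is controlled by the compatibility remark following the lemma.

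The initial differences are $w(0)=(P_\Lambda-I)u_0$ and $Z(0)=(P_\Lambda-I)\tau_0$. Using $u_0\in B^{1+\alpha}$ and $\tau_0\in B^\alpha$, these have $B^0$ and $B^{-1}$ norms at most $C\Lambda^{-1-\alpha}$. Proposition~\ref{prop:coupled-reconstruction-forcing} bounds the forcing by $C(h^\beta+h\Lambda^{1-\alpha}+\Lambda^{-1-\alpha})$, which is $O(h^{(1+\alpha)/2})$ at $\Lambda=h^{-1/2}$. Consequently
\[
\sup_{t\le t_J}\bigl(\|v_\Lambda-u\|_{B^0}+\|z_\Lambda-\tau\|_{B^{-1}}\bigr)\le Ch^{(1+\alpha)/2}.
\]
At a node, $v(t_n)=u^n$ and $z(t_n)=\tau^n$. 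The filter error satisfies $\|(I-P_\Lambda)u^n\|_{B^0}\le C\Lambda^{-1-\alpha}\|u^n\|_{B^{1+\alpha}}$ and $\|(I-P_\Lambda)\tau^n\|_{B^{-1}}\le C\Lambda^{-1-\alpha}R$, both $O(h^{(1+\alpha)/2})$. This proves \eqref{eq:conditional-mixed-rate}. The bound is uniform in $J$ because every constant is uniform in $J$.

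For \eqref{eq:conditional-critical-rate}, we interpolate between the mixed rate and the uniform bounds. Since $\|e_\tau^n\|_{B^\alpha}\le C$, dyadic interpolation between $B^{-1}$ and $B^\alpha$ gives
\[
\|e_\tau^n\|_{B^0}\lesssim\|e_\tau^n\|_{B^{-1}}^{\alpha/(1+\alpha)}\|e_\tau^n\|_{B^\alpha}^{1/(1+\alpha)}\lesssim h^{\alpha/2}.
\]
For the velocity, $e_u^n$ is bounded in $B^{1+\alpha}$ and is $O(h^{(1+\alpha)/2})$ in $B^0$. The same interpolation with weights $\alpha/(1+\alpha)$ and $1/(1+\alpha)$ gives $\|e_u^n\|_{B^1}\lesssim h^{\alpha/2}$. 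Finally $e_\Gamma^n=\curl e_u^n-\mathcal Re_\tau^n$, so
\[
\|e_\Gamma^n\|_{B^0}\lesssim\|e_u^n\|_{B^1}+\|e_\tau^n\|_{B^0}\lesssim h^{\alpha/2}.
\]
For \eqref{eq:conditional-weak-rate} at $B^{-\alpha}$, we again interpolate between $B^{-1}$ (rate $h^{(1+\alpha)/2}$) and $B^\alpha$ (bounded). The interpolation weight on the $B^{-1}$ norm is $2\alpha/(1+\alpha)$, so the stress bound is $h^{\alpha}$. For $\Gamma$ we use $\|\curl e_u^n\|_{B^{-\alpha}}\lesssim\|e_u^n\|_{B^{1-\alpha}}$. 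Interpolating $B^{1-\alpha}$ between $B^0$ and $B^{1+\alpha}$ puts weight $2\alpha/(1+\alpha)$ on the $B^0$ norm, which gives $h^\alpha$ as well.

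The main obstacle is justifying that the a priori mixed stability applies uniformly. Specifically, the distributional difference equations for $w$ and $Z$ must hold with $f_\Lambda\in L^1B^0$ and $g_\Lambda\in L^1B^{-1}$, and the qualitative regularity $Z\in L^1B^1$ must hold, with none of these constants depending on $\Lambda$. The $\Lambda$-dependent qualitative smoothness only ensures finiteness. The quantitative constants are uniform because $M$ uses only $\|v_\Lambda\|_{B^{1+\alpha}}\le C_R$ and the exact norms. We would check this point, together with the condition that the step restriction for $h$ small depends only on $T,R,\alpha$.
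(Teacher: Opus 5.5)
Your proposal is correct and follows essentially the paper's route: mixed stability for the filtered reconstruction with \(\Lambda=h^{-1/2}\), exact initialization giving an \(O(\Lambda^{-1-\alpha})\) initial discrepancy, removal of the cutoff at the nodes, and dyadic interpolation against the uniform high-norm bounds. The only cosmetic difference is the order of the final step. You interpolate \(e_u\) between \(B^0\) and \(B^{1+\alpha}\) and then read off \(e_\Gamma\) from the curl identity. The paper instead first bounds \(e_\Gamma\) in \(B^{-1}\) via \(e_\Gamma=\curl e_u-\mathcal Re_\tau\), interpolates \(\tau\) and \(\Gamma\) between \(B^{-1}\) and \(B^\alpha\), and then recovers the velocity.
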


\begin{proof}
All comparisons are on \([0,t_J]\). The preceding forcing estimate
applies by the conditional numerical bound and discrete stress
smoothing. The filtered velocity \(v_\Lambda\) is uniformly bounded
in \(L^\infty(0,t_J;B^{1+\alpha})\), independently of \(\Lambda\).
For each fixed cutoff, the path is spatially smooth and piecewise
smooth in time. Thus
\[
 v_\Lambda-u\in C([0,t_J];B^0),\qquad
 z_\Lambda-\tau\in C([0,t_J];B^{-1})\cap L^1(0,t_J;B^1).
\]
These qualitative properties justify the mixed stability estimate.
The required exact norm \(\|\tau\|_{L^1_TB^1}\) is controlled by
\eqref{eq:endpoint-exact-class}.

The uniform reconstruction bounds in
\eqref{eq:reconstruction-weak-increments} give
\begin{equation}
 \|(I-P_\Lambda)v(t)\|_{B^0}
       +\|(I-P_\Lambda)z(t)\|_{B^{-1}}
 \le C_R\Lambda^{-1-\alpha},\qquad 0\le t\le t_J.
 \label{eq:coupled-mixed-tails}
\end{equation}
Exact initialization gives the same bound for the initial discrepancy
between the filtered path and the exact solution. Apply
Lemma~\ref{lem:coupled-mixed-continuous-stability} to
\eqref{eq:coupled-filtered-system}, and use
Proposition~\ref{prop:coupled-reconstruction-forcing}. Then
\begin{equation}
 \|v_\Lambda-u\|_{L^\infty(0,t_J;B^0)}
 +\|z_\Lambda-\tau\|_{L^\infty(0,t_J;B^{-1})}
 \le C_{T,R,\alpha}\left(
 \Lambda^{-1-\alpha}+h\Lambda^{1-\alpha}
                         +h^{(1+\alpha)/2}\right).
 \label{eq:coupled-filtered-error}
\end{equation}
Take \(\Lambda=h^{-1/2}\), remove the cutoff using
\eqref{eq:coupled-mixed-tails}, and evaluate at the nodes.
Node matching proves \eqref{eq:conditional-mixed-rate}.

The identity \(e_\Gamma^n=\curl e_u^n-\mathcal Re_\tau^n\) yields
\begin{equation}
 \max_{n\le J}
 \bigl(\|e_\tau^n\|_{B^{-1}}+\|e_\Gamma^n\|_{B^{-1}}\bigr)
 \le C_{T,R,\alpha}h^{(1+\alpha)/2}.
 \label{eq:coupled-weak-vorticity}
\end{equation}
Both errors are uniformly bounded in \(B^\alpha\) by the exact
assumptions and \eqref{eq:conditional-prefix-bound}. A single dyadic
interpolation therefore gives, for \(f\in\{\tau,\Gamma\}\) and
\(-1\le s\le\alpha\),
\begin{equation}
 \|e_f^n\|_{B^s}
 \le C_{\alpha,s}
   \|e_f^n\|_{B^{-1}}^{(\alpha-s)/(1+\alpha)}
   \|e_f^n\|_{B^\alpha}^{(1+s)/(1+\alpha)}
 \le C_{T,R,\alpha,s}h^{(\alpha-s)/2}.
 \label{eq:conditional-interpolation}
\end{equation}
At \(s=-1\) and \(s=\alpha\), use the weak error estimate and
the uniform bound directly. Taking \(s=-\alpha\) and \(s=0\)
proves the stress and good-vorticity assertions; velocity recovery
\eqref{eq:common-error-recovery-bound} gives the \(B^1\) velocity error.
\end{proof}

\subsection{Continuation and completion of the main theorem}
\label{sec:exact-interval-continuation}

We now remove the conditional bound. The proof below constructs each
new node using tame stability, and only then applies the conditional
error estimate to the enlarged existing interval.

\begin{proposition}[Continuation along a regular exact trajectory]
\label{prop:exact-interval-continuation}
Fix \(0<\alpha\le1\) and \(T>0\). Assume that the exact solution
satisfies \eqref{eq:endpoint-exact-class} on \([0,T]\), with the
divergence-free velocity and symmetric stress specified above.
Initialize the time-discrete scheme with the exact data and the
conserved mean \(m_0\). For all sufficiently small \(h=T/M\), the
scheme has a unique \(B^\alpha\) orbit through time \(T\), and
\begin{equation}
 \max_{0\le n\le M}
 \bigl(\|u^n\|_{B^{1+\alpha}}+
       \|\tau^n\|_{B^\alpha}+\|\Gamma^n\|_{B^\alpha}\bigr)
 \le C_T.
 \label{eq:exact-interval-discrete-high}
\end{equation}
All estimates in Theorem~\ref{thm:base-algebraic} hold on this interval.
The constants and the upper bound on \(h\) depend only on
\(T,\alpha,a,b\) and the stated exact-solution norms.
No numerical bound is assumed in this conclusion.
\end{proposition}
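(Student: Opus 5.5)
The plan is a first-exit continuation argument in the spirit of the proof of Theorem~\ref{thm:critical-convergence}. The new ingredient is a \emph{tame} propagation of the $B^\alpha$ norms whose growth coefficients involve only critical norms. Set
\[
 L=\sup_{0\le t\le T}\bigl(|m_0|+\|\tau(t)\|_{B^0}+\|\Gamma(t)\|_{B^0}\bigr).
\]
The critical quantities of the numerical orbit will be kept below $L+1$ by the conditional error estimate~\eqref{eq:conditional-critical-rate}. The $B^\alpha$ quantities $X_n$ of~\eqref{eq:conditional-prefix-bound} will then be bounded by a constant $K$ depending only on $L$, $T$, $\alpha$, $a$, $b$. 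This avoids the circularity in which the constant of Proposition~\ref{prop:conditional-convergence} depends on the $B^\alpha$ threshold $R$.

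\emph{Step 1: tame one-step recurrence.} Suppose $Y_n:=|m_0|+\|\tau^n\|_{B^0}+\|\Gamma^n\|_{B^0}\le L+1$ and $hX_n$ is small. I would show
\[
 X_{n+1}\le(1+C_Lh)X_n+C_Lh,
\]
with $C_L$ independent of $X_n$. The argument runs through the stress step first and then the good-vorticity step.
\begin{itemize}
\item By Lemma~\ref{lem:transport-stability} at $r=\alpha\le1$, the transport resolvent costs only $1+Ch\|u^n\|_{B^1}\le1+C_Lh$.
\item The stress source obeys the positive-index Moser estimate
\[
 \|S^n\|_{B^\alpha}\lesssim\|u^n\|_{B^{1+\alpha}}(1+\|\tau^n\|_\infty)+\|\nabla u^n\|_\infty\|\tau^n\|_{B^\alpha}.
\]
Together with $\|u^n\|_{B^{1+\alpha}}\lesssim X_n$ and $\|u^n\|_{B^1}+\|\tau^n\|_\infty\lesssim L+1$, this gives $\|\tau^{n+1}\|_{B^\alpha}\le(1+C_Lh)\|\tau^n\|_{B^\alpha}+C_LhX_n$.
\item For $G^n$, the same Moser bound applied to $Q(\nabla u^n,\tau^{n+1})$ uses $\|\tau^{n+1}\|_\infty\le C_L$; this follows from the $B^0$ step bound of Lemma~\ref{ep:one-step}.
\item The commutator $[\mathcal R,u^n\cdot\nabla]\tau^n$ is controlled by the tame estimate~\eqref{eq:critical-tame-R}, and the linear terms are immediate.
\end{itemize}
Discrete Gronwall then gives $X_n\le K:=e^{C_LT}(X_0+C_LT)$ as long as the critical bound holds. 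Here $X_0$ is controlled by the exact data.

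\emph{Step 2: choice of thresholds and first exit.} Set $R=K+1$. Choose $h$ so small that three conditions hold:
\begin{itemize}
\item Proposition~\ref{prop:conditional-convergence} applies at threshold $R+1$ with $C_{T,R+1,\alpha}h^{\alpha/2}<1$;
\item the step condition $hC(R+1)\le c_0$ of Lemma~\ref{lem:transport-stability} holds;
\item the one-step growth from Proposition~\ref{prop:discrete-high}, namely $X_{n+1}\le X_n+C_ah(1+X_n)^3$, stays below $1$ when $X_n\le R$.
\end{itemize}
Construct nodes while $X_n\le R$. Suppose $j$ is the first node with $t_j\le T$ and $X_j>R$. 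By the one-step bound the node exists and satisfies $X_j\le R+1$. The whole prefix through $j$ then satisfies~\eqref{eq:conditional-prefix-bound} with threshold $R+1$, so~\eqref{eq:conditional-critical-rate} gives $Y_n\le L+1$ for all $n\le j$. Step~1 then yields $X_j\le K<R$, a contradiction. Hence every node up to $T$ exists, is unique (by resolvent uniqueness), and satisfies $X_n\le R$. Recovery gives~\eqref{eq:exact-interval-discrete-high}, and Proposition~\ref{prop:conditional-convergence} applied with $J=M$ gives all estimates of Theorem~\ref{thm:base-algebraic}.

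\emph{Main obstacle.} The delicate point is Step~1: every coefficient multiplying $X_n$ must depend only on critical norms. The sources contain mixed products such as $\nabla u^n\,\tau^n$, and the standard Moser estimate places $\|\tau^n\|_{L^\infty}$ and $\|\nabla u^n\|_{L^\infty}$ on the low factor; both are controlled by $B^0$ and $B^1$ norms. The transport commutator at index $\alpha\le1$ needs only $\|u^n\|_{B^1}$, by~\eqref{eq:critical-tame-transport}. I would check carefully that the paraproduct remainder in $B^\alpha$, $\alpha>0$, needs only $L^\infty$ on one factor, so that no logarithmic loss or $\|\tau^{n+1}\|_{B^1}$ factor appears, since the latter is not uniformly summable with a constant independent of $R$. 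If such a term did appear, I would instead absorb it using the smoothing bound~\eqref{ep:stress-integrated}, which depends only on the critical threshold $L+1$.
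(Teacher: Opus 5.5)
Your proposal is correct and follows the paper's proof. Both arguments combine a tame $B^\alpha$ recurrence whose growth coefficients involve only the critical quantities $Y_n$, a threshold $R$ fixed from that recurrence before $h$ is chosen, the one-step bound of Proposition~\ref{prop:discrete-high} to construct the next node, and the conditional critical-norm rate to keep $Y_n$ near the exact critical norms. The differences are cosmetic: you run a first-exit contradiction with thresholds $R$ and $R+1$ and an additive recurrence, whereas the paper runs a forward induction with thresholds $R/2$, $3R/4$, $R$ and the multiplicative form $X_{n+1}\le\exp(C_1h(1+Y_n+Y_{n+1}))X_n$.
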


\begin{proof}
Use \(X_n=|m_0|+\|\tau^n\|_{B^\alpha}+\|\Gamma^n\|_{B^\alpha}\)
and put
\[
 Y_n=|m_0|+\|\tau^n\|_{B^0}+\|\Gamma^n\|_{B^0},\qquad
 L=1+\sup_{0\le t\le T}
 \bigl(|m_0|+\|\tau(t)\|_{B^0}+\|\Gamma(t)\|_{B^0}\bigr).
\]
Recovery gives \(\|u^n\|_{B^1}\le CY_n\) and
\(\|u^n\|_{B^{1+\alpha}}\le CX_n\).
The tame product and commutator estimates give
\[
 \|S^n\|_{B^\alpha}\le C(1+Y_n)X_n,\qquad
 \|G^n\|_{B^\alpha}
 \le C\bigl((1+Y_n+Y_{n+1})X_n
                  +Y_n\|\tau^{n+1}\|_{B^\alpha}\bigr).
\]
Apply \eqref{eq:transport-stability} first to stress, then to good
vorticity. For \(ChY_n\le1/2\), this yields
\begin{equation}
 X_{n+1}\le
 \exp\bigl(C_1h(1+Y_n+Y_{n+1})\bigr)X_n.
 \label{eq:critical-tame-high-recursion}
\end{equation}
This estimate will be used only after controlling both critical norms.

Choose
\[
 R=2(1+X_0)\exp\bigl(C_1(1+2L)T\bigr).
\]
Choose \(h\) small enough for the resolvents and all conditional
estimates with this fixed \(R\), and require
\begin{equation}
 Ch(1+R)^3\le R/4,\qquad
 C_{T,R,\alpha}h^{\alpha/2}\le\tfrac12.
 \label{eq:continuation-smallness}
\end{equation}
Suppose the already constructed nodes satisfy \(X_j<R/2\) and
\(Y_j\le L\) through \(n\). The one-step construction in the proof of
Proposition~\ref{prop:discrete-high} produces node \(n+1\) and gives \(X_{n+1}\le X_n+Ch(1+X_n)^3<3R/4\).
The enlarged interval therefore satisfies the conditional bound \(R\).
Only now apply \eqref{eq:conditional-critical-rate}; it gives
\[
 Y_{n+1}\le |m_0|+\|\tau(t_{n+1})\|_{B^0}
                  +\|\Gamma(t_{n+1})\|_{B^0}+\tfrac12
 \le L-\tfrac12.
\]
With both \(Y_n,Y_{n+1}\le L\), iterate
\eqref{eq:critical-tame-high-recursion} to obtain
\[
 X_{n+1}\le X_0e^{C_1(1+2L)t_{n+1}}<R/2.
\]
The induction starts with the exact initial data and reaches \(M\).
It proves \eqref{eq:exact-interval-discrete-high}. Applying
Proposition~\ref{prop:conditional-convergence} with \(J=M\) proves
all the asserted algebraic rates.
\end{proof}

\section{Rates Under Additional Besov Regularity}
\label{sec:endpoint-rate-family}

This section proves Theorem~\ref{thm:endpoint-rate-family} in three steps.
First, Lemma~\ref{lem:additional-numerical-regularity} gives uniform
$B^{1+\theta}$ bounds for the numerical stress and good vorticity along
the orbit constructed in Section~\ref{sec:quantitative-proofs}.
Second, the residual estimates and the error recurrence give an
$O(h)$ error in $B^{\theta-1}$.
Finally, interpolation between $B^{\theta-1}$ and $B^{1+\theta}$ gives
$h^{(1+\theta)/2}$ in $B^0$, and velocity recovery gives the same rate
in $B^1$.

\subsection{Residual identities and heat quadrature}
\label{sec:shared-residuals}

The numerical orbit exists through \(T\) by
Proposition~\ref{prop:exact-interval-continuation}, used with
\(\alpha=1\). Unless stated otherwise, the residual sums below run from
\(n=0\) to \(M-1\).
Define the exact one-step residuals by
\begin{align}
 R_\tau^n
 &=\tau(t_{n+1})+hu(t_n)\cdot\nabla\tau(t_{n+1})-E_h\tau(t_n)
   -hE_hS(u(t_n),\tau(t_n)),
   \label{eq:Rtau-definition}\\
 R_\Gamma^n
 &=\Gamma(t_{n+1})+hu(t_n)\cdot\nabla\Gamma(t_{n+1})
   -\Gamma(t_n)-hG(u(t_n),\tau(t_n)).
   \label{eq:Rgamma-definition}
\end{align}
Subtracting the numerical scheme gives
\begin{align}
 e_\tau^{n+1}+hu^n\cdot\nabla e_\tau^{n+1}
 ={}&E_he_\tau^n
   +hE_h\bigl(S(u(t_n),\tau(t_n))-S(u^n,\tau^n)\bigr)\notag\\
 &-he_u^n\cdot\nabla\tau(t_{n+1})+R_\tau^n,
   \label{eq:full-stress-error}\\
 e_\Gamma^{n+1}+hu^n\cdot\nabla e_\Gamma^{n+1}
 ={}&e_\Gamma^n
   +h\bigl(G(u(t_n),\tau(t_n))-G(u^n,\tau^n)\bigr)\notag\\
 &-he_u^n\cdot\nabla\Gamma(t_{n+1})+R_\Gamma^n-hJ^n.
   \label{eq:full-gamma-error}
\end{align}
The correction \(-hJ^n\) appears because the exact residuals use
the source evaluated at \((u(t_n),\tau(t_n))\).

The mild stress equation and the good-vorticity equation separate
changes in the solution over a time step from differences between heat
operators. With \(\ell=t_{n+1}-t\),
\begin{align}
 R_\tau^n={}&
 \int_{I_n}E_\ell\bigl(S(u(t),\tau(t))-S(u(t_n),\tau(t_n))\bigr)\,dt\notag\\
 &+\int_{I_n}(E_\ell-E_h)S(u(t_n),\tau(t_n))\,dt\notag\\
 &+\int_{I_n}\bigl(u(t_n)\cdot\nabla\tau(t_{n+1})
                   -u(t)\cdot\nabla\tau(t)\bigr)\,dt\notag\\
 &+\int_{I_n}(I-E_\ell)(u(t)\cdot\nabla\tau(t))\,dt,
 \label{eq:stress-residual-decomposition}\\
 R_\Gamma^n={}&
 \int_{I_n}\bigl(G(u(t),\tau(t))-G(u(t_n),\tau(t_n))\bigr)\,dt\notag\\
 &+\int_{I_n}\bigl(u(t_n)\cdot\nabla\Gamma(t_{n+1})
                   -u(t)\cdot\nabla\Gamma(t)\bigr)\,dt.
 \label{eq:gamma-residual-decomposition}
\end{align}
Thus only \(R_\tau^n\) contains heat-quadrature errors.
For \(s\in\mathbb R\), \(0<\rho\le2\), and \(0<h\le1\),
\eqref{eq:E-difference} and heat contraction give
\begin{align}
 \sum_n\int_0^h\|(E_\ell-E_h)F^n\|_{B^s}\,d\ell
 &\le C_{a,\rho}h^{\rho/2}
       h\sum_n\|F^n\|_{B^{s+\rho}},
 \label{eq:heat-quadrature-rate}\\
 \sum_n\int_{I_n}\|(I-E_{t_{n+1}-t})F(t)\|_{B^s}\,dt
 &\le C_{a,\rho}h^{\rho/2}\|F\|_{L^1_TB^{s+\rho}}.
 \label{eq:heat-identity-rate}
\end{align}
For the first bound use
\(E_\ell-E_h=E_\ell(I-E_{h-\ell})\); the second uses
\(t_{n+1}-t\le h\).

\subsection{Propagation of the error}

The bounds from Section~\ref{sec:quantitative-proofs}, applied with
\(\alpha=1\), already control the numerical stress and good vorticity
in \(B^1\). The final interpolation also requires uniform
\(B^{1+\theta}\) bounds. The following lemma propagates these higher
norms using the established \(B^1\) bounds.

\begin{lemma}[Propagation of additional numerical regularity]
\label{lem:additional-numerical-regularity}
Let \(0<\theta\le1\) and \(Jh\le T\). Suppose an already constructed
orbit of \eqref{eq:scheme-tau-good}--\eqref{eq:scheme-gamma} satisfies
\[
 \max_{0\le n\le J}
 \bigl(\|u^n\|_{B^2}+\|\tau^n\|_{B^1}
                       +\|\Gamma^n\|_{B^1}\bigr)\le R.
\]
If \(\tau^0,\Gamma^0\in B^{1+\theta}\), then, for all sufficiently
small \(h\), depending only on \(R,\theta,a,b\),
\begin{equation}
 \max_{0\le n\le J}
 \bigl(\|u^n\|_{B^{2+\theta}}+\|\tau^n\|_{B^{1+\theta}}
                 +\|\Gamma^n\|_{B^{1+\theta}}\bigr)
 \le C_{T,R,\theta,a,b}
       \bigl(|m_0|+\|\tau^0\|_{B^{1+\theta}}
                    +\|\Gamma^0\|_{B^{1+\theta}}\bigr).
 \label{eq:additional-numerical-regularity}
\end{equation}
\end{lemma}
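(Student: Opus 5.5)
The plan is to repeat the tame continuation estimate \eqref{eq:critical-tame-high-recursion} from Proposition~\ref{prop:exact-interval-continuation}, one index higher. The already controlled $B^1$ norms play the role of the critical norms, and the $B^{1+\theta}$ norms play the role of the high norms. Set
\[
 W_n=|m_0|+\|\tau^n\|_{B^{1+\theta}}+\|\Gamma^n\|_{B^{1+\theta}}.
\]
Recovery (Lemma~\ref{lem:multiplier-bookkeeping}) gives $\|u^n\|_{B^{2+\theta}}\le CW_n$. The goal is a \emph{linear} recursion
\[
 W_{n+1}\le\bigl(1+C_{R,\theta}h\bigr)W_n,
\]
with the coefficient depending only on the fixed bound $R$. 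Discrete Gronwall then gives \eqref{eq:additional-numerical-regularity} with constant $e^{C_{R,\theta}T}$. Regularity of each iterate in $B^{1+\theta}$ is obtained first, as in Proposition~\ref{cor:coupled-numerical-smoothing}. For $n\ge0$, the right-hand sides lie in $B^{1+\theta}$ by induction. Lemma~\ref{lem:transport-stability} at $r=1+\theta\le2$ then solves in $B^{1+\theta}$, and uniqueness in $B^1$ identifies this solution with the given iterate. Since $\|u^n\|_{B^{1+\theta}}\le C\|u^n\|_{B^2}\le CR$, the step restriction $h\|u^n\|_{B^{1+\theta}}\le c_0$ depends only on $R$.

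\emph{Stress step.} By heat contraction and the resolvent bound with velocity norm $\|u^n\|_{B^{\max(1,1+\theta)}}\le CR$,
\[
 \|\tau^{n+1}\|_{B^{1+\theta}}\le(1+CRh)\bigl(\|\tau^n\|_{B^{1+\theta}}+h\|S^n\|_{B^{1+\theta}}\bigr).
\]
The source is tame in the high norm:
\[
 \|Q(\nabla u^n,\tau^n)\|_{B^{1+\theta}}\lesssim\|\nabla u^n\|_\infty\|\tau^n\|_{B^{1+\theta}}+\|\tau^n\|_\infty\|\nabla u^n\|_{B^{1+\theta}}\le C_R W_n,
\]
and $\|D(u^n)\|_{B^{1+\theta}}\le CW_n$. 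Hence $\|\tau^{n+1}\|_{B^{1+\theta}}\le\|\tau^n\|_{B^{1+\theta}}+C_RhW_n$.

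\emph{Good-vorticity step.} Lemma~\ref{lem:gamma-source} is used with $\alpha=1+\theta\in(1,2]$. The tame commutator bound \eqref{eq:critical-tame-R} gives
\[
 \|[\mathcal R,u^n\cdot\nabla]\tau^n\|_{B^{1+\theta}}\lesssim\|\nabla u^n\|_\infty\|\tau^n\|_{B^{1+\theta}}+\|u^n\|_{B^{2+\theta}}\|\tau^n\|_{B^0}\le C_RW_n.
\]
The constitutive term $\mathcal RQ(\nabla u^n,\tau^{n+1})$ is treated by the same tame product law. It uses $\|\tau^{n+1}\|_\infty\le\|\tau^{n+1}\|_{B^1}$, which is bounded by $R$ for $n+1\le J$, and the new-stress bound just obtained. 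Together these give $\|G^n\|_{B^{1+\theta}}\le C_RW_n$. The resolvent estimate at $r=1+\theta$ then yields $\|\Gamma^{n+1}\|_{B^{1+\theta}}\le\|\Gamma^n\|_{B^{1+\theta}}+C_RhW_n$. Adding the two steps and the conserved mean closes the linear recursion.

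\emph{Main obstacle.} The delicate point is keeping every high norm linear, with coefficients depending only on $R$. Each product must put $B^{1+\theta}$ or $B^{2+\theta}$ on exactly one factor and an $L^\infty$ or $B^1$ bound on the other. For the commutator at index $1+\theta>1$, this is precisely the tame form \eqref{eq:critical-tame-R}. For the transport resolvent at $r=1+\theta$, Lemma~\ref{lem:transport-stability} as stated only uses $\|v\|_{B^r}\|f\|_{B^r}$. That is still linear in $\|f\|_{B^{1+\theta}}$, because $\|u^n\|_{B^{1+\theta}}\le CR$, so no loss occurs. If any estimate produced $\|u^n\|_{B^{2+\theta}}\|\tau\|_{B^{1+\theta}}$, the argument would fail. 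The proof must therefore check that the high–low pieces only pair $\|u\|_{B^{2+\theta}}$ with low norms of $\tau$.
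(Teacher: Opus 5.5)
Your proposal is correct and follows the paper's proof essentially step for step. You use the same linear recursion for $|m_0|+\|\tau^n\|_{B^{1+\theta}}+\|\Gamma^n\|_{B^{1+\theta}}$, obtained from the transport resolvent at index $1+\theta$ with coefficient controlled by $\|u^n\|_{B^{1+\theta}}\lesssim R$, tame product and commutator bounds pairing the high norm with a low one, the $B^1$ orbit bound on $\tau^{n+1}$ in the constitutive term, identification of the $B^{1+\theta}$ solves with the given iterates via uniqueness in $B^1$, and a final discrete Gronwall step.
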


\begin{proof}
Put \(s=1+\theta\in(1,2]\) and
\[
 K_n=|m_0|+\|\tau^n\|_{B^s}+\|\Gamma^n\|_{B^s}.
\]
Recovery gives \(\|u^n\|_{B^{s+1}}\le CK_n\), whereas the assumed
velocity bound gives \(\|u^n\|_{B^s}\le C R\), since \(s\le2\).
Consequently, Lemma~\ref{lem:transport-stability}, applied at index
\(s\), bounds the inverse of \(I+hu^n\cdot\nabla\) on \(B^s\) by
\(1+Ch\), with \(C\) independent of \(K_n\) and \(h\).

The tame product law used in \eqref{eq:source-Q-detail} and the
commutator estimate \eqref{eq:critical-tame-R} apply at \(s\le2\).
With the assumed lower-norm bounds, they give
\(\|S^n\|_{B^s}\le CK_n\). The stress solve and heat contraction imply
\begin{equation}
 \|\tau^{n+1}\|_{B^s}
 \le(1+Ch)\bigl(\|\tau^n\|_{B^s}+ChK_n\bigr).
 \label{eq:additional-stress-recursion}
\end{equation}
For the good-vorticity source in \eqref{eq:Gn}, the constitutive term
uses the newly computed stress. Its \(B^1\) norm is controlled
by the assumed orbit bound, so
\[
 \|Q(\nabla u^n,\tau^{n+1})\|_{B^s}
 \le C\bigl(\|\tau^{n+1}\|_{B^s}+K_n\bigr).
\]
The remaining source terms depend only on the fields at time \(t_n\).
Hence
\begin{equation}
 \|\Gamma^{n+1}\|_{B^s}
 \le(1+Ch)\bigl(\|\Gamma^n\|_{B^s}
                     +Ch(K_n+\|\tau^{n+1}\|_{B^s})\bigr).
 \label{eq:additional-gamma-recursion}
\end{equation}
Substituting \eqref{eq:additional-stress-recursion} into
\eqref{eq:additional-gamma-recursion}, and increasing \(C\) for
\(h\le1\), yields \(K_{n+1}\le(1+Ch)K_n\).
Discrete Gronwall and recovery prove
\eqref{eq:additional-numerical-regularity}.

To justify the regularity induction, first construct \(\tau^{n+1}\)
using the inverse on \(B^s\). Uniqueness in \(B^1\) identifies this
solution with the existing iterate, so its known \(B^1\) bound can be
used in the good-vorticity source.
The subsequent good-vorticity solve is identified in the same way.
Thus both higher norms are constructed at each step, with a step
restriction depending only on the given lower-order bounds.
\end{proof}

The higher numerical bound completes the estimates needed for
interpolation. We now use the residual identities to obtain an
\(O(h)\) error in \(B^{\theta-1}\), and then recover the asserted
critical-norm rate.

\begin{proof}[Proof of Theorem~\ref{thm:endpoint-rate-family}]
Set \(r=\theta-1\in(-1,0]\).  All sums over \(n\) below run from
\(0\) to \(M-1\), and all constants may depend on the fixed \(\theta\).

\emph{Step 1: uniform bounds and estimates in weaker norms.}
The exact assumptions imply the base class with \(\alpha=1\).
Proposition~\ref{prop:exact-interval-continuation} therefore gives
\begin{equation}
 \max_{0\le n\le M}
 \bigl(\|u^n\|_{B^2}+\|\tau^n\|_{B^1}+\|\Gamma^n\|_{B^1}\bigr)
 \le C_T.                                             \label{eq:endpoint-rate-numerical-high}
\end{equation}
The exact initial data belong to \(B^{1+\theta}\) in stress and
good vorticity. Applying Lemma~\ref{lem:additional-numerical-regularity}
to this already constructed orbit also gives
\begin{equation}
 \max_{0\le n\le M}
 \bigl(\|u^n\|_{B^{2+\theta}}+\|\tau^n\|_{B^{1+\theta}}
                   +\|\Gamma^n\|_{B^{1+\theta}}\bigr)
 \le C_{T,\theta}.
 \label{eq:endpoint-rate-full-numerical-high}
\end{equation}
For this range of \(r\), the product and commutator estimates needed are
\begin{align}
 \|fg\|_{B^r}&\le C_\theta\|f\|_{B^1}\|g\|_{B^r},\notag\\
 \|fw\|_{B^\theta}&\le C_\theta\|f\|_{B^1}\|w\|_{B^\theta},\notag\\
 \|[\mathcal R,v\cdot\nabla]g\|_{B^r}
 &\le C_\theta\|v\|_{B^1}\|g\|_{B^r},
 \qquad \Div v=0.                                    \label{eq:lifted-weak-calculus}
\end{align}
For the first product, \(1+r=\theta>0\) makes the high--high remainder
summable; at \(r=0\), use the mixed \(B^1\cdot B^0\to B^0\) law.
The second is the positive-index product estimate for \(0<\theta\le1\).
For the commutator, the dyadic argument of
Lemma~\ref{lem:gamma-source} works also at \(-1<r\le0\).
The low--high term uses \(\|\nabla v\|_\infty\); the high--high
sum is geometric because \(r+1>0\), and the high--low sum is
\[
 C\sum_k 2^k\|\Delta_kv\|_\infty
   \sum_{\ell\le k-2}2^{-(1-r)(k-\ell)}
                   2^{r\ell}\|\Delta_\ell g\|_\infty
 \le C_\theta\|v\|_{B^1}\|g\|_{B^r}.
\]

Bilinear splitting, \eqref{eq:lifted-weak-calculus}, and the divergence
form of the commutator with the velocity difference yield
\begin{align}
 &\|S(v,z)-S(w,y)\|_{B^r}
   +\|G(v,z)-G(w,y)\|_{B^r}\notag\\
 &\qquad\le C_T\bigl(\|v-w\|_{B^\theta}
                         +\|z-y\|_{B^r}\bigr),        \label{eq:endpoint-rate-source-lipschitz}
\end{align}
when the velocities are bounded in \(B^2\) and the stresses in \(B^1\).
For example,
\[
 [\mathcal R,(v-w)\cdot\nabla]z
 =\mathcal R\Div((v-w)\otimes z)
       -\Div((v-w)\otimes\mathcal Rz),
\]
and both products belong to \(B^\theta\).
Likewise, for \(F=\tau(t_{n+1})\) or \(\Gamma(t_{n+1})\),
\[
 \|e_u^n\cdot\nabla F\|_{B^r}
 \lesssim\|e_u^nF\|_{B^\theta}
 \le C_T\|e_u^n\|_{B^\theta}.
\]
Recovery bounds this last norm by
\(\|e_\tau^n\|_{B^r}+\|e_\Gamma^n\|_{B^r}\).

\emph{Step 2: accumulated consistency.}
The momentum and stress equations, the exact bounds with
\(\alpha=1\), and the good-vorticity equation give
\begin{equation}
 \partial_tu,\partial_t\tau\in L^1_TB^1,\qquad
 \partial_t\Gamma\in L^\infty_TB^\theta.              \label{eq:endpoint-rate-time-derivatives}
\end{equation}
For the last assertion, the definition of \(\Gamma\) gives
\(\Gamma\in C_tB^{1+\theta}\), so \(u\cdot\nabla\Gamma\in
L^\infty_TB^\theta\); the good-vorticity source is bounded in \(B^1\).
For \(u\), the forcing \(\Div\tau\) is integrable in \(B^1\);
the other momentum terms are bounded there. The stress assertion is
\eqref{eq:tau-time-derivative}. Consequently, absolute continuity
and Fubini's theorem give, for \(f=u,\tau\),
\begin{equation}
 \sum_n\int_{I_n}
 \bigl(\|f(t)-f(t_n)\|_{B^1}
       +\|f(t)-f(t_{n+1})\|_{B^1}\bigr)\,dt
 \le 2h\|\partial_tf\|_{L^1_TB^1},
 \label{eq:base-summed-increments}
\end{equation}
while
\(\|\Gamma(t)-\Gamma(t')\|_{B^\theta}\le C_T|t-t'|\).
The stated time-integrated bound on \(\partial_t\tau\) in \(B^1\)
is sufficient for these estimates.

Apply these bounds to
\eqref{eq:stress-residual-decomposition}--\eqref{eq:gamma-residual-decomposition}.
The time integrals of the source differences sum to \(O(h)\) by
\eqref{eq:endpoint-rate-source-lipschitz}, since \(\theta\le1\)
and \(r\le0\). The frozen source satisfies
\[
 \|S(u(t_n),\tau(t_n))\|_{B^{1+\theta}}\le C_T.
\]
Since \(r+2=1+\theta\), applying \eqref{eq:heat-quadrature-rate} with
\(s=r\) and \(\rho=2\) bounds the summed heat-quadrature contribution
by \(C_Th\).

For \(F=\tau\) or \(\Gamma\), divergence form and
\eqref{eq:lifted-weak-calculus} give
\begin{align*}
 &\|u(t_n)\cdot\nabla F(t_{n+1})
                   -u(t)\cdot\nabla F(t)\|_{B^r}\\
 &\qquad\le C_T\bigl(\|u(t_n)-u(t)\|_{B^\theta}
                 +\|F(t_{n+1})-F(t)\|_{B^\theta}\bigr).
\end{align*}
The estimates for increments of \(u,\tau\) within each time step,
together with the Lipschitz bound for \(\Gamma\), therefore give an
\(O(h)\) bound for the sum of both transport residuals. The remaining stress heat increment is
also \(O(h)\): by \eqref{eq:endpoint-rate-class},
\[
 u\cdot\nabla\tau\in L^1_TB^{2+\theta}
                  \hookrightarrow L^1_TB^{r+2},
\]
so \eqref{eq:heat-identity-rate} applies with \(s=r\) and \(\rho=2\).
Combining these source, transport, and heat bounds yields
\begin{equation}
 \sum_n\bigl(\|R_\tau^n\|_{B^r}
                      +\|R_\Gamma^n\|_{B^r}\bigr)\le C_{T,\theta}h.
 \label{eq:lifted-accumulated-residual}
\end{equation}

\emph{Step 3: propagation and interpolation.}
Set \(\mathcal E_n=\|e_\tau^n\|_{B^r}+\|e_\Gamma^n\|_{B^r}\).
Apply the estimates from Step 1 to the stress and good-vorticity error equations
\eqref{eq:full-stress-error}--\eqref{eq:full-gamma-error}.
Lemma~\ref{lem:transport-stability} applies at \(r=\theta-1\in(-1,0]\), by
\eqref{eq:endpoint-rate-numerical-high}.  Together with heat contraction,
it gives
\begin{equation}
 \mathcal E_{n+1}\le(1+C_{T,\theta}h)\mathcal E_n
       +C_{T,\theta}\bigl(\|R_\tau^n\|_{B^r}
                           +\|R_\Gamma^n\|_{B^r}
                           +h\|J^n\|_{B^r}\bigr).
 \label{eq:endpoint-rate-recursion}
\end{equation}
Since \(r\le0\), applying \eqref{eq:delay-accumulated-positive} with
\(\alpha=1\) bounds the accumulated source correction by
\(C_{T,\theta}h\).

Since \(\mathcal E_0=0\), discrete Gronwall and
\eqref{eq:lifted-accumulated-residual} imply
\(\max_n\mathcal E_n\le C_{T,\theta}h\).
For \(0<\theta<1\), dyadic interpolation between
\(B^{\theta-1}\) and \(B^{1+\theta}\) yields
\[
 \|e_f^n\|_{B^0}
 \lesssim_\theta
 \|e_f^n\|_{B^{\theta-1}}^{(1+\theta)/2}
 \|e_f^n\|_{B^{1+\theta}}^{(1-\theta)/2}
 \le C_{T,\theta}h^{(1+\theta)/2},\qquad f=\tau,\Gamma.
\]
The \(B^{1+\theta}\) error norm is uniformly bounded by the assumed
exact-solution regularity and \eqref{eq:endpoint-rate-full-numerical-high}. At \(\theta=1\),
\(r=0\), so the weak estimate already gives first-order convergence.
Velocity recovery proves \eqref{eq:endpoint-rate-family} with exponent
\((1+\theta)/2\).
\end{proof}

\section{Constitutive-Source Obstruction and Error Lower Bounds}
\label{sec:endpoint-sharpness}

The examples in this section explain why the constitutive source uses
the new stress and test whether the convergence exponents can be improved.
The first example shows the failure of a uniform bound for the source
evaluated at the old stress. The advected shears then give error lower
bounds for the full scheme.

\subsection{Obstruction for the old-stress constitutive source}

Uniform bounds on the input fields in the critical spaces do not give
a uniform \(B^0\) bound for the constitutive source evaluated at the old
stress. The following explicit family shows this failure and how the
heat step controls the source formed with the new stress.

\begin{proposition}[An obstruction for the old-stress constitutive source]
\label{prop:critical-source-obstruction}
Let \(a=0\), \(b=1/2\), and \(K=2^\nu\ge64\). Set \(L=K/4\) and
\[
 F_L(x)=\frac4\pi\sum_{\substack{1\le\ell\le L\\\ell\ {\rm odd}}}
       \left(1-\frac{\ell}{L+1}\right)\frac{\sin(\ell x)}{\ell},
 \qquad g_K(x)=\cos(Kx)F_L(x).
\]
Define the following smooth initial data:
\[
 u_K^0=\left(0,\frac{\sin(Kx_1)}K\right),\qquad
 \tau_K^0=\operatorname{diag}(g_K(x_1),-g_K(x_1)).
\]
Let \(\tau_{K,h}^1\) be the first stress update in
\eqref{eq:scheme-tau-good}, for \(h>0\). There are constants
\(c,C>0\), independent of \(K,h\), such that
\begin{align}
 \|u_K^0\|_{B^1}+\|\tau_K^0\|_{B^0}
       +\|\Gamma_K^0\|_{B^0}&\le C,
 \label{eq:source-obstruction-input}\\
 c\log K\le
 \|\mathcal RQ(\nabla u_K^0,\tau_K^0)\|_{B^0}
       &\le C(1+\log K),
 \label{eq:source-obstruction-old}\\
 \|\mathcal RQ(\nabla u_K^0,\tau_{K,h}^1)\|_{B^0}
       &\le C(1+\log K)e^{-9hK^2/16}.
 \label{eq:source-obstruction-new}
\end{align}
Thus the old-stress source map has no uniform \(B^0\) bound on
bounded subsets of \(B^1\times B^0\), even for smooth data.
For every fixed \(h>0\), the new-stress sources in this family tend
to zero as \(K\to\infty\).
\end{proposition}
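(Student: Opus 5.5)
The plan is to compute everything explicitly. All fields depend on \(x_1\) only, and \(u_K^0\) has only a second component. Hence the transport resolvent acts as the identity and the tensor algebra collapses to scalar Fourier series in \(x_1\). Write \(c=\cos(Kx_1)\), \(\sigma=\bigl(\begin{smallmatrix}0&1\\1&0\end{smallmatrix}\bigr)\) and \(\eta=\operatorname{diag}(1,-1)\).

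\emph{Step 1 (algebra).} We have \((\nabla u_K^0)_{21}=c\), and all other entries vanish, so \(D=\tfrac c2\sigma\) and \(\Omega=\tfrac c2\bigl(\begin{smallmatrix}0&-1\\1&0\end{smallmatrix}\bigr)\). A direct multiplication gives the following identities:
\[
 Q(\nabla u,g\eta)=-cg\,\sigma,\qquad D\eta+\eta D=0,\qquad
 Q(\nabla u,s\sigma)=cs\,\eta-bcs\,I .
\]
Next I compute \(\mathcal R\) on tensors depending only on \(x_1\). For an off-diagonal tensor \(q\sigma\), \(\curl\Div(q\sigma)=\partial_1^2q\), so \(\mathcal R(q\sigma)=q-\langle q\rangle\). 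For any diagonal tensor, \(\curl\Div=-\partial_2\partial_1A_{11}=0\), so \(\mathcal R\) annihilates it. In particular \(\mathcal R\tau_K^0=0\), and therefore \(\Gamma_K^0=\curl u_K^0=c\).

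\emph{Step 2 (inputs and old source).} I prove \eqref{eq:source-obstruction-input} as follows. First, \(\|u_K^0\|_{B^1}\lesssim1\), since this field is a single mode of amplitude \(1/K\). Next, \(g_K\) has spectrum in \([K-L,K+L]\subset[3K/4,5K/4]\), which meets \(O(1)\) dyadic blocks. Finally, \(\|g_K\|_\infty\le\|F_L\|_\infty\le C\), because \(F_L\) is a Fejér-type mean of the square-wave series.

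By Step 1,
\[
\mathcal RQ(\nabla u_K^0,\tau_K^0)=-\tfrac12F_L-\tfrac12\cos(2Kx_1)F_L ,
\]
since \(\langle F_L\rangle=0\) and \(\cos(2Kx_1)F_L\) has mean zero. The second piece lives in \(O(1)\) blocks and is bounded in \(L^\infty\), so it contributes \(O(1)\). The main work is the two-sided estimate \(\|F_L\|_{B^0}\simeq\log L\). The upper bound follows from \(\|\Delta_jF_L\|_\infty\le C\) for the \(O(\log L)\) blocks with \(2^j\lesssim L\). For the lower bound I will show \(\|\Delta_jF_L\|_\infty\ge c\) for \(1\le j\le\log_2L-3\), by evaluating \(\Delta_jF_L\) at a point near the jump, e.g.\ \(x\approx2^{-j}\). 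There the localized kernel sees the \(\operatorname{sign}(\sin x)\) discontinuity at scale \(2^{-j}\), and the Fejér weights \(1-\ell/(L+1)\ge3/4\) on that block perturb the sum only by \(O(2^{j}/L)\). This pointwise lower bound for the dyadic blocks of the smoothed square wave is the step I expect to be the main obstacle. A fallback is to test \(\Delta_jF_L\) against a suitable \(L^1\)-normalized function and reduce to the explicit sum \(\sum_{\ell\sim2^j,\ \ell\text{ odd}}\ell^{-1}\gtrsim1\).

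\emph{Step 3 (new source).} Since \(u_K^0\cdot\nabla=\tfrac{\sin(Kx_1)}K\partial_2\) kills functions of \(x_1\), the resolvent is the identity, and
\[
\tau^1_{K,h}=E_h\tau_K^0+hE_hS^0,\qquad hE_hS^0=s\,\sigma,\quad s=hE_h\bigl(\tfrac c2+cg_K\bigr).
\]
By Step 1, the contribution \(Q(\nabla u,s\sigma)\) is diagonal, so \(\mathcal R\) removes it exactly. This leaves
\[
\mathcal RQ(\nabla u_K^0,\tau^1_{K,h})=-\bigl(cE_hg_K-\langle cE_hg_K\rangle\bigr).
\]
The modes of \(g_K\) are \(e^{\pm i(K\pm\ell)x_1}\) with odd \(\ell\le L\) and coefficients of size \(\le C/\ell\). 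Their frequencies satisfy \(|K\pm\ell|\ge3K/4\), so \(E_h\) multiplies each by at most \(e^{-9hK^2/16}\). Hence the \(\ell^1\) sum of the Fourier coefficients of \(cE_hg_K\) is at most \(C e^{-9hK^2/16}\sum_{\ell\le L}\ell^{-1}\le C(1+\log K)e^{-9hK^2/16}\). Each mode belongs to at most two dyadic blocks, and \(\|\Delta_jf\|_\infty\) is bounded by the \(\ell^1\) norm of the coefficients in that block. Summing over blocks therefore gives \eqref{eq:source-obstruction-new}, and the final assertions follow by letting \(K\to\infty\) at fixed \(h\).
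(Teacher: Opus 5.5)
\textbf{Overall.} Your route is the paper's: reduce everything to functions of $x_1$, note that $\mathcal R$ of a symmetric $x_1$-dependent tensor is $A_{12}-\langle A_{12}\rangle$, and use the identities $q^{\rm old}=-\tfrac12F_L-\tfrac12\cos(2Kx_1)F_L$ and $q^{\rm new}=-\cos(Kx_1)E_hg_K$. The $B^0$ upper bounds then come from the $\ell^1$ norm of Fourier coefficients, with heat damping $e^{-9hK^2/16}$ on frequencies $\ge 3K/4$. Your algebra, the input bounds and the new-source bound are correct. Your old-source upper bound, via $\|\Delta_jF_L\|_\infty\le C\|F_L\|_\infty$ on the $O(\log K)$ nonzero blocks, is also correct and is a harmless substitute for the paper's coefficient sum.

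\textbf{The gap.} The lower bound in \eqref{eq:source-obstruction-old} is the actual obstruction, and you leave it as a plan. The justification you sketch does not close it. Comparing the Fej\'er weights with $1$ only shows $\Delta_jF_L=\Delta_j\operatorname{sgn}(\sin\cdot)+O(2^j/L)$. The claim that ``the kernel sees the jump'' is then an unproved uniform lower bound for $\|\Delta_j\operatorname{sgn}(\sin\cdot)\|_\infty$, which is the same difficulty.

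\textbf{The missing idea is positivity.}
\begin{enumerate}
\item Fix a Littlewood--Paley decomposition with $\varphi\ge0$ and $\varphi\ge d_0>0$ on $[1,3/2]$. For example, take $\varphi(\xi)=\chi(\xi/2)-\chi(\xi)$ with $\chi$ radial and nonincreasing, $\chi=1$ on $|\xi|\le3/4$, $\chi=0$ on $|\xi|\ge4/3$, and $\chi(1)<1$. Equivalent decompositions change $B^0$ norms only by constants independent of $K,h$.
\item Evaluate at $x_j=(\pi/4)2^{-j}$. Every $\ell$ charged by block $j$ has $\ell\le\frac83 2^j$, so $0<\ell x_j\le2\pi/3$. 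Hence \emph{every} summand $\varphi(2^{-j}\ell)(1-\frac{\ell}{L+1})\sin(\ell x_j)/\ell$ of $\Delta_jF_L(x_j)$ is nonnegative.
\item Discard all but the $2^{j-2}$ odd $\ell\in[2^j,\frac32 2^j]$. On these, $\varphi\ge d_0$, the Fej\'er weight is $\ge\frac12$, $\sin(\ell x_j)\ge1/\sqrt2$, and $1/\ell\ge\frac23 2^{-j}$. This gives $\Delta_jF_L(x_j)\ge d_0/(3\pi\sqrt2)$ for $2\le j\le\nu-4$.
\end{enumerate}
Your point $x=2^{-j}$ works for the same reason with the standard annulus $\{3/4\le|\xi|\le8/3\}$, since $\frac83<\pi$. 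Your fallback also closes: test against the uniformly $L^1$-bounded kernel $\sum_{\ell\ge1}\varphi(2^{-j}\ell)\sin(\ell x)$, which produces the nonnegative weights $\varphi(2^{-j}\ell)^2$.

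\textbf{Summing the blocks.} It is cleaner to argue blockwise than by the triangle inequality. For $j\le\nu-4$, the blocks of $\cos(2Kx_1)F_L$ vanish, because its spectrum lies in $|k_1|\ge7K/4$. Hence $\|q^{\rm old}\|_{B^0}\ge\frac12\sum_{j=2}^{\nu-4}\|\Delta_jF_L\|_\infty\ge c\log K$ for every $K\ge64$. Your $O(1)$ loss would otherwise force a separate treatment of small $K$.

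\textbf{A smaller point.} The statement holds for every $h>0$. You should therefore justify that the transport inverse is uniquely defined outside the small-$h$ regime of Lemma~\ref{lem:transport-stability}. Taking Fourier coefficients in $x_2$ gives the denominators $1+ihk_2\sin(Kx_1)/K$, which never vanish.
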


\begin{proof}
We may use any fixed equivalent smooth Littlewood--Paley decomposition;
the equivalence constants are independent of \(K,h\).
Choose a radial, nonincreasing cutoff \(\chi\), equal to one for
\(|\xi|\le3/4\), zero for \(|\xi|\ge4/3\), and with \(\chi(1)<1\).
Its annular multiplier is
\(\varphi(\xi)=\chi(\xi/2)-\chi(\xi)\ge0\).
On functions of \(x_1\) alone, write \(\varphi(r)=\varphi(r,0)\).
Then
\[
 \operatorname{supp}\varphi\subset\{3/4\le |\xi|\le8/3\},
 \qquad \varphi(r)\ge d_0:=1-\chi(1)>0
       \quad(1\le r\le3/2).
\]

The function \(F_L\) is the Fej\'er mean of the bounded function
\(\operatorname{sgn}(\sin x)\). Since the Fej\'er kernel is nonnegative
and has integral one, \(\|F_L\|_\infty\le1\). The support of \(g_K\) lies in
\(3K/4\le|k_1|\le5K/4\), and hence intersects only a fixed
number of dyadic annuli. Uniform localized kernel bounds give
\(\|g_K\|_{B^0}\le C\|g_K\|_\infty\le C\).
The single velocity frequency similarly gives \(\|u_K^0\|_{B^1}\le C\).
Moreover, \(\mathcal R\tau_K^0=0\) and
\(\Gamma_K^0=\cos(Kx_1)\), proving \eqref{eq:source-obstruction-input}.

Put \(f_K=\cos(Kx_1)\). For each fixed \(x_1\), taking Fourier
coefficients in \(x_2\) gives the transport inverse denominators
\(1+ihk_2(u_K^0)_2(x_1)\), which never vanish. Since the right-hand
side of the first stress step depends only on \(x_1\), the inverse
leaves it unchanged. Direct substitution in \(Q\) gives
\[
 \tau_{K,h}^1=
 \begin{pmatrix}
 E_hg_K & hE_h(f_K/2+f_Kg_K)\\
 hE_h(f_K/2+f_Kg_K)&-E_hg_K
 \end{pmatrix}.
\]
For a symmetric tensor depending only on \(x_1\),
\(\mathcal RA=A_{12}-\langle A_{12}\rangle\).
Both constitutive-source expressions below are odd in \(x_1\), so
their spatial means vanish. Another direct computation yields
\begin{align}
 q_K^{\rm old}:=\mathcal RQ(\nabla u_K^0,\tau_K^0)
       &=-\tfrac12F_L-\tfrac12\cos(2Kx_1)F_L,\notag\\
 q_{K,h}^{\rm new}:=\mathcal RQ(\nabla u_K^0,\tau_{K,h}^1)
       &=-f_KE_hg_K.
 \label{eq:source-obstruction-identities}
\end{align}

For \(2\le j\le \nu-4\), evaluate \(\Delta_jF_L\) at
\(x_j=(\pi/4)2^{-j}\). Every nonzero summand has
\(0<\ell x_j\le2\pi/3\), so all summands are nonnegative.
Restricting to odd integers
\(2^j\le\ell\le(3/2)2^j\), of which there are \(2^{j-2}\), gives
\[
 (\Delta_jF_L)(x_j)
 \ge\frac4\pi\,2^{j-2}\,d_0\,\frac12\,
              \frac{2}{3\,2^j}\,\frac1{\sqrt2}
 =\frac{d_0}{3\pi\sqrt2}.
\]
Here \(1-\ell/(L+1)\ge1/2\) and
\(\sin(\ell x_j)\ge1/\sqrt2\) on the selected integers.
The shifted term \(\cos(2Kx_1)F_L\) has frequencies of magnitude
at least \(7K/4\), and therefore has zero \(\Delta_j\) block
for all these \(j\). Consequently each of the \(\nu-5\) blocks
contributes at least \(d_0/(6\pi\sqrt2)\) to
\(\|q_K^{\rm old}\|_{B^0}\). Since \(\nu-5\ge \nu/6\) for
\(\nu\ge6\), this proves the lower bound in
\eqref{eq:source-obstruction-old}.

For the upper bounds, use normalized Fourier coefficients, so that
\(q(x)=\sum_k\widehat q(k)e^{ik\cdot x}\).
The nonnegative dyadic partition gives
\(\|q\|_{B^0}\le\sum_k|\widehat q(k)|\), and
\[
 \sum_\ell|\widehat F_L(\ell)|
 =\frac4\pi\sum_{\substack{1\le\ell\le L\\\ell\ {\rm odd}}}
       \frac{1-\ell/(L+1)}{\ell}
 \le C(1+\log K).
\]
The Fourier coefficients of \(f_K\) have \(\ell^1\) norm one, so
multiplication by \(f_K\) does not increase the \(\ell^1\) norm of
the Fourier coefficients. This proves the old-source upper bound. Since all frequencies
of \(g_K\) have magnitude at least \(3K/4\), the heat multiplier
also gives
\[
 \sum_k|\widehat q_{K,h}^{\rm new}(k)|
 \le\sum_k|\widehat{E_hg_K}(k)|
 \le e^{-9hK^2/16}\sum_k|\widehat g_K(k)|
 \le C(1+\log K)e^{-9hK^2/16}.
\]
This proves \eqref{eq:source-obstruction-new} and the assertions.
\end{proof}

This proposition concerns the constitutive source on a bounded family
of critical data. It does not assert nonconvergence of a time-stepping
scheme that evaluates this source at the old stress.

\subsection{Error lower bounds for advected shears}

A fixed advected shear with constant stress reduces the scheme to
implicit transport. At critical regularity it gives a logarithmic
error lower bound; with \(\alpha>0\) additional Besov derivatives, it
rules out every algebraic exponent larger than \(\alpha/2\).

\begin{proposition}[Logarithmic lower bound for the full time-discrete scheme]
\label{prop:endpt-lower-bound}
Let \(\gamma>0\), \(a=b=0\), \(T\in(0,2\pi)\), and take the fixed,
\(h\)-independent initial data
\begin{gather*}
 C=\begin{pmatrix}0&0\\0&1\end{pmatrix},\qquad
 \tau_0=C,\qquad u_0=(1,W(x_1)),\\
 W(x_1)=\sum_{m\ge1}(1+m)^{-\gamma-1}2^{-m}\sin(2^mx_1).
\end{gather*}
They give the exact solution
\[
 u(t,x)=(1,W(x_1-t)),\qquad \tau(t,x)=C,\qquad
 \Gamma(t,x)=W'(x_1-t),
\]
with \(u\in C([0,T];B^1)\),
\(\Gamma\in C([0,T];B^0)\), smooth constant stress, and
\(m_0=\langle u(t)\rangle=(1,0)\).
Initialize the spatially untruncated scheme
\eqref{eq:recover-discrete}--\eqref{eq:scheme-gamma} exactly, with this
stored mean. Each transport equation has a unique bounded solution for
every \(h>0\), and \(\tau^n=C\) at every step.  If \(h=T/M\le\frac12\),
\(M\ge2\), then
\begin{equation}
 \|\Gamma(T)-\Gamma^M\|_{L^\infty(\mathbb T^2)}
 \ge c_\gamma\bigl(1+\log_2\tfrac1h\bigr)^{-\gamma}.
 \label{eq:endpt-lower-bound}
\end{equation}
The same lower bound holds in \(B^0\), and
\[
 \|u(T)-u^M\|_{B^1}
 \ge c'_\gamma\bigl(1+\log_2\tfrac1h\bigr)^{-\gamma},\qquad
 \|\tau(T)-\tau^M\|_{B^0}=0,
\]
where \(c_\gamma,c'_\gamma>0\) are independent of \(h\).
Thus even for this fixed critical datum the full scheme has no positive
algebraic rate in the original velocity norm.
\end{proposition}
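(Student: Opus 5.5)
The plan is to reduce the full scheme, for these data, to a pure implicit upwind transport of a single $x_1$-dependent function, and then to compute the error exactly by Fourier multipliers. The argument has four steps.

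\emph{Step 1: the stress stays constant and the source vanishes.} With $w=W'(x_1)$, a direct computation gives
\[
 \Omega(u)=\begin{pmatrix}0&-w/2\\ w/2&0\end{pmatrix},\qquad
 D(u)=\begin{pmatrix}0&w/2\\ w/2&0\end{pmatrix}.
\]
Since $a=b=0$, one has $Q(\nabla u,C)=C\Omega-\Omega C=D(u)$. Hence $S(u,C)=D(u)-Q(\nabla u,C)=0$ for every velocity of the form $(1,V(x_1))$.

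This gives the exact solution. The stress $\tau\equiv C$ solves \eqref{eq:tau}, because $\Delta C=0$ and $u\cdot\nabla C=0$. The velocity $(1,W(x_1-t))$ solves \eqref{eq:u} with $\Div\tau=0$ and zero pressure, since $u\cdot\nabla u=(0,\partial_1 u_2)=-\partial_t u$. Consequently $\Gamma=W'(x_1-t)$, because $\mathcal RC=0$.

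Now I argue by induction on $n$ that $\tau^n=C$, that $\Gamma^n$ depends only on $x_1$, and that $u^n=(1,W_n(x_1))$. The last claim follows from the recovery formula \eqref{eq:recover-discrete}, since $\mathcal K$ of a function of $x_1$ is $(0,\cdot)$.

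\begin{itemize}
\item The stress step has right-hand side $E_hC+hE_hS^n=C$, and $C$ solves the transport equation.
\item In $G^n$ the terms $a\mathcal R\tau^n$ and $[\mathcal R,u^n\cdot\nabla]C$ vanish.
\item The terms $-\mathcal RD(u^n)+\mathcal RQ(\nabla u^n,C)$ cancel, so $G^n=0$.
\item On functions of $x_1$ one has $u^n\cdot\nabla=\partial_1$.
\end{itemize}

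Therefore $\Gamma^{n+1}=(I+h\partial_1)^{-1}\Gamma^n$, and
\[
 \Gamma^M=(I+h\partial_1)^{-M}W'.
\]

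\emph{Step 2: unique solvability for every $h>0$.} Take a Fourier transform in $x_2$. The operator $I+h(\partial_1+W_n(x_1)\,\partial_2)$ becomes, at mode $k_2$, the periodic first-order ODE
\[
 f+h\bigl(f'+ik_2W_nf\bigr)=g.
\]
Its integrating factor has modulus $e^{x_1/h}$ over a period, and $e^{2\pi/h}\neq1$. So each mode has a unique periodic solution, and this gives uniqueness among bounded solutions and identifies the iterates above.

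\emph{Step 3: exact error at the point $x_1=T$.} Write
\[
 W'=\sum_m a_m\cos(2^mx_1),\qquad a_m=(1+m)^{-\gamma-1}.
\]
The mode $e^{ikx_1}$ is multiplied by $(1+ihk)^{-M}$ in the scheme and by $e^{-ikT}$ in the exact solution. Hence
\[
 \bigl(\Gamma(T)-\Gamma^M\bigr)(T)=\sum_m a_m\Bigl(1-\operatorname{Re}\bigl[e^{i2^mT}(1+ih2^m)^{-M}\bigr]\Bigr).
\]
Every summand is nonnegative, since the bracket has modulus at most $1$.

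For $h2^m\ge1$ one has $|(1+ih2^m)^{-M}|\le2^{-M/2}\le\tfrac12$ because $M\ge2$. Dropping the other summands gives the lower bound
\[
 \tfrac12\sum_{m\ge\log_2(1/h)}(1+m)^{-\gamma-1}\ \ge\ c_\gamma\bigl(1+\log_2\tfrac1h\bigr)^{-\gamma}.
\]
This is the main point of the proof. A naive single-coefficient bound would lose a power of the logarithm, and the positivity at the evaluation point avoids any cancellation. The hypothesis $T<2\pi$ is only needed to place the comparison point within one period; the pointwise bound itself holds regardless.

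\emph{Step 4: transfer to the stated norms.} The $B^0$ bound follows from $\|f\|_\infty\le C\|f\|_{B^0}$. The stress error is $\tau(T)-\tau^M=C-C=0$. Then $e_\Gamma=\curl e_u-\mathcal Re_\tau=\curl e_u$, so $\|e_u\|_{B^1}\gtrsim\|e_\Gamma\|_{B^0}$, which gives the velocity bound.

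It remains to check the stated regularity of the exact solution. The lacunary series satisfies $\|W'\|_{B^0}\lesssim\sum_m a_m<\infty$, and translation is continuous in $B^0$ because the summability index is one. This gives $u\in C([0,T];B^1)$ and $\Gamma\in C([0,T];B^0)$.
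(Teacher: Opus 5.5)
Your proposal is correct and follows the paper's proof step for step. The shared steps are the cancellation $Q(\nabla v,C)=D(v)$, which gives $S=G=0$ and $\tau^n=C$; the reduction to $\Gamma^{n+1}=(I+h\partial_1)^{-1}\Gamma^n$; evaluation of the nonnegative series at $x_1=T$; and keeping only the modes with $h2^m\ge1$, where $|(1+ih2^m)^{-M}|\le2^{-M/2}\le\tfrac12$. The only difference is cosmetic: you prove uniqueness of the transport solve with an $x_2$-Fourier decomposition and an integrating factor, while the paper uses the volume-preserving change of variables $y_2=x_2-\partial_1^{-2}\Gamma^n(x_1)$ to reduce $u^n\cdot\nabla$ to $\partial_{y_1}$; both arguments are valid.
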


\begin{proof}
Write \(a_m=(1+m)^{-\gamma-1}\).  Since \(\sum_ma_m<\infty\),
\(W'=\Gamma_0=\sum_ma_m\cos(2^mx_1)\in B^0\) and
\(W\in B^1\cap C^1\), both with zero mean.  Their finite
Fourier sums have tails bounded in these respective Besov norms by a
constant times \(\sum_{m>L}a_m\); continuity of the translated finite sums
proves the asserted time continuity.  The constant stress belongs to
\(C([0,T];B^0)\cap L^1(0,T;B^2)\).
For any shear \(v=(1,V(x_1))\), put \(d=V'/2\).  At \(b=0\),
\[
 \Omega(v)=\begin{pmatrix}0&-d\\d&0\end{pmatrix},\qquad
 Q(\nabla v,C)=C\Omega(v)-\Omega(v)C
 =\begin{pmatrix}0&d\\d&0\end{pmatrix}=D(v).
\]
Moreover \(\mathcal RC=0\) and
\([\mathcal R,v\cdot\nabla]C=0\), so \(G(v,C)=0\).
Consequently the constant stress solves \eqref{eq:tau}, while
\(\partial_tu+u\cdot\nabla u=0\) and \(\Div C=0\) verify
\eqref{eq:u} with constant pressure.  The recovery convention gives
\(\mathcal Kq=(0,\partial_1^{-1}q)\) for zero-mean \(q=q(x_1)\),
where the primitive has zero mean.

Suppose inductively that \(\tau^n=C\) and \(\Gamma^n\in B^0\)
has zero mean and depends only on \(x_1\). Then
\(u^n=(1,\partial_1^{-1}\Gamma^n)\), and the preceding cancellations
give \(S^n=0\), \(E_hC=C\). For every \(h>0\), the characteristic
formula defines the unique bounded inverse of \(I+hu^n\cdot\nabla\).
For a right-hand side depending only on \(x_1\), it reduces to
\[
 (I+hu^n\cdot\nabla)^{-1}g(x_1)
   =\int_0^\infty e^{-s}g(x_1-hs)\,ds.
\]
To prove uniqueness among bounded weak solutions, note that the volume-preserving
\(C^1\) change of variables
\(y_1=x_1\), \(y_2=x_2-\partial_1^{-2}\Gamma^n(x_1)\)
transforms \(u^n\cdot\nabla\) into \(\partial_{y_1}\), also in
the weak formulation. The periodic primitive has zero mean, and the
transformed resolvent has nonvanishing Fourier multipliers
\(1+ihk_1\), proving uniqueness.
First the stress solve gives \(\tau^{n+1}=C\), hence \(G^n=0\).
The good-vorticity solve is therefore
\[
 \Gamma^{n+1}+h\partial_1\Gamma^{n+1}=\Gamma^n,\qquad
 u^{n+1}=(1,\partial_1^{-1}\Gamma^{n+1}).
\]
Translations preserve the mean and contract every dyadic block, so the
integral formula preserves \(B^0\). This completes the induction.

Set \(\lambda_m=(1+ih2^m)^{-1}\).  Fourier diagonalization gives
\[
 \Gamma^n(x)=\sum_{m\ge1}a_m
       \operatorname{Re}\!\left(\lambda_m^n e^{i2^mx_1}\right).
\]
Absolute uniform convergence permits evaluation at \(x_1=T\), where
\begin{align*}
 \|\Gamma(T)-\Gamma^M\|_{L^\infty}
 &\ge\sum_ma_m\left[1-\operatorname{Re}
                     (\lambda_m^M e^{i2^mT})\right]\\
 &\ge\sum_ma_m(1-|\lambda_m|^M).
\end{align*}
If \(2^m\ge h^{-1}\), then
\(|\lambda_m|^M=(1+h^24^m)^{-M/2}\le2^{-M/2}\le\frac12\).  Therefore
\[
 \|\Gamma(T)-\Gamma^M\|_{L^\infty}
 \ge\frac12\sum_{m\ge\log_2(1/h)}(1+m)^{-\gamma-1}
 \ge c_\gamma(1+\log_2(1/h))^{-\gamma}.
\]
The \(B^0\) lower bound follows from its embedding into
\(L^\infty\).  Since the stress error vanishes and the means agree,
\(e_u=u(T)-u^M\) and \(e_\Gamma=\Gamma(T)-\Gamma^M\) satisfy
\[
 e_u=\mathcal K e_\Gamma,\qquad \curl e_u=e_\Gamma,\qquad
 \|e_\Gamma\|_{B^0}\lesssim\|e_u\|_{B^1}.
\]
The last inequality follows from boundedness of differentiation from
\(B^1\) to \(B^0\) and proves the velocity lower bound.

\end{proof}

For the logarithmic and algebraic upper bounds below, write
\(P_h=(I+h\partial_1)^{-1}\) and
\(\mathsf T_hf=f(\,\cdot-he_1)\), \(e_1=(1,0)\).
Taylor's formula in the integral representation of \(P_h\) gives
\(\|(P_h-\mathsf T_h)f\|_\infty
\le h^2\|\partial_1^2f\|_\infty/2\) for smooth \(f\), since
\(\int_0^\infty e^{-s}(s-1)\,ds=0\) and
\(\int_0^\infty e^{-s}(s-1)^2\,ds=1\).
Both operators contract \(L^\infty\) and commute with dyadic blocks.
Telescoping and Bernstein's inequality give, for any
\(q_0=q_0(x_1)\in B^0\), \(j\ge-1\), and \(nh\le T\),
\begin{equation}
 \|\Delta_j(P_h^nq_0-\mathsf T_h^nq_0)\|_\infty
 \le C\min\{1,Th\,2^{2j}\}\|\Delta_jq_0\|_\infty.
 \label{eq:shear-dyadic-error}
\end{equation}

\begin{remark}[Matching logarithmic rate for the fixed shear]
\label{rem:critical-shear-matching}
For the shear in Proposition~\ref{prop:endpt-lower-bound}, put
\(a_m=(1+m)^{-\gamma-1}\) and
\(L=\lfloor\tfrac12\log_2(1/h)\rfloor\), with \(0<h\le1/4\).
Applying \eqref{eq:shear-dyadic-error} to the lacunary modes and
recovering velocity gives the matching upper bound
\[
 \begin{aligned}
 \|\Gamma(T)-\Gamma^M\|_{B^0}+\|u(T)-u^M\|_{B^1}
 &\le C_T\left(h\sum_{m\le L}4^ma_m+\sum_{m>L}a_m\right)\\
 &\le C_{T,\gamma}\bigl((1+L)^{-\gamma-1}+(1+L)^{-\gamma}\bigr)
 \le C_{T,\gamma}(1+\log_2(1/h))^{-\gamma}.
 \end{aligned}
\]
Together with the lower bound, this proves the exact logarithmic order for this
fixed shear family; it does not assert a universal rate for critical data.
\end{remark}

The logarithmic example uses an initial shear profile in \(B^1\) that
belongs to no \(B^{1+\varepsilon}\) with \(\varepsilon>0\).
We now impose a \(B^{1+\alpha}\) bound on the shear profile. The same
transport recurrence gives the upper rate \(h^{\alpha/2}\), and a fixed
lacunary profile excludes every larger exponent.

\begin{proposition}[Optimal algebraic exponent for advected shears]
\label{prop:positive-regularity-lower-bound}
Fix \(0<\alpha\le2\), \(T>0\), and \(a=b=0\).  Let
\(W=W(x_1)\in B^{1+\alpha}(\mathbb T^2)\) have zero mean and set
\[
 \tau_0=\begin{pmatrix}0&0\\0&1\end{pmatrix},\qquad
 u_0=(1,W(x_1)).
\]
The exact solution \(u(t)=(1,W(x_1-t))\), \(\tau(t)=\tau_0\)
satisfies \eqref{eq:endpoint-exact-class} for \(0<\alpha\le1\), and
\eqref{eq:endpoint-rate-class} with \(\theta=\alpha-1\) for
\(1<\alpha\le2\). Initialize the spatially untruncated scheme
exactly and retain the mean \(m_0=(1,0)\).
For every \(h=T/M\le1\), it satisfies \(\tau^n=\tau_0\) and
\begin{equation}
 \max_{0\le n\le M}
 \left(\|u(t_n)-u^n\|_{B^1}
       +\|\Gamma(t_n)-\Gamma^n\|_{B^0}\right)
 \le C_{T,\alpha}h^{\alpha/2}\|W\|_{B^{1+\alpha}}.
 \label{eq:positive-regularity-shear-upper}
\end{equation}
For the fixed, \(h\)-independent choice
\[
 W=W_\alpha,\qquad
 W_\alpha(x_1)=\sum_{m\ge1}
       \frac{2^{-m(1+\alpha)}}{(1+m)^2}\sin(2^mx_1),
\]
the exponent \(\alpha/2\) cannot be increased.
For \(M_m=4^m\) and \(h_m=T/M_m\), its errors at time \(T\) satisfy,
for all sufficiently large \(m\),
\begin{equation}
 \|u(T)-u^{M_m}\|_{B^1}
 \ge c_{T,\alpha}\frac{h_m^{\alpha/2}}{(1+m)^2}
 \asymp\frac{h_m^{\alpha/2}}{(1+|\log h_m|)^2}.
 \label{eq:positive-regularity-lower-bound}
\end{equation}
The same lower bound holds for the \(B^0\) good-vorticity error,
whereas the stress error is zero.  For this fixed datum neither error
is \(O(h^r)\) for any \(r>\alpha/2\).
\end{proposition}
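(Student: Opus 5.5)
The argument has three parts: reduce the scheme to a scalar shear recurrence, bound the error blockwise with \eqref{eq:shear-dyadic-error}, and read off the lower bound from a single resonant lacunary mode.

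\emph{Reduction.} The algebra in Proposition~\ref{prop:endpt-lower-bound} applies verbatim.
\begin{itemize}
\item For a shear \(v=(1,V(x_1))\) and \(a=b=0\), one has \(Q(\nabla v,\tau_0)=D(v)\), \(\mathcal R\tau_0=0\), \([\mathcal R,v\cdot\nabla]\tau_0=0\), and \(G(v,\tau_0)=0\).
\item These identities show that \((u,\tau)\) is an exact solution.
\item The regularity classes follow directly. Translation in \(x_1\) is continuous in \(B^{1+\alpha}\) by the tail argument used there, and \(\tau_0\) is constant.
\item Inductively, \(\tau^n=\tau_0\) and \(\Gamma^{n+1}=P_h\Gamma^n\), so \(\Gamma^n=P_h^n\Gamma_0\) with \(\Gamma_0=W'\).
\item The exact solution is \(\Gamma(t_n)=\mathsf T_h^n\Gamma_0\).
\end{itemize}

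\emph{Upper bound.} Apply \eqref{eq:shear-dyadic-error} blockwise with \(q_0=W'\in B^\alpha\), which gives
\[
\|e_\Gamma^n\|_{B^0}\le C\sum_j\min\{1,Th4^j\}\|\Delta_jW'\|_\infty
\le C\sup_j\bigl(\min\{1,Th4^j\}2^{-\alpha j}\bigr)\|W'\|_{B^\alpha}.
\]
Using \(\min\{1,x\}\le x^{\alpha/2}\) for \(0<\alpha\le2\), the supremum is at most \((Th)^{\alpha/2}\). Velocity recovery, \(e_u=\mathcal Ke_\Gamma\), together with \eqref{eq:K-bookkeeping}, then gives \eqref{eq:positive-regularity-shear-upper}.

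\emph{Lower bound.} Take \(h_m=T4^{-m}\), \(M_m=4^m\), and \(a_k=2^{-k\alpha}(1+k)^{-2}\), so that \(\Gamma_0=\sum_k a_k\cos(2^kx_1)\). The exact mode factors are \(e^{-i2^kT}\), while the numerical factors are \(\lambda_k^M\) with \(\lambda_k=(1+ih2^k)^{-1}\).

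Isolate the single resonant mode \(k=m\) with a dyadic block \(\Delta_j\) that sees only \(2^m\). The lacunary frequencies lie in distinct annuli, after possibly passing to a fixed equivalent decomposition as in Proposition~\ref{prop:critical-source-obstruction}. This gives
\[
\|e_\Gamma^M\|_{B^0}\ge c\,a_m\bigl|e^{-i2^mT}-\lambda_m^M\bigr|.
\]
At \(k=m\), we have \(h2^m=T2^{-m}\) and \(M=4^m\), so
\[
|\lambda_m|^M=(1+T^24^{-2m})^{-4^m/2}\to1 .
\]
The modulus therefore does not give a gap. The gap must come from the phase: \(\arg\lambda_m^M=-M\arctan(h2^m)\), and the expansion \(\arctan x=x-x^3/3+\dots\) gives a phase error of
\[
M\,(h2^m)^3/3=T^3\,4^m2^{-3m}/3\to0 .
\]
This vanishes, so \(k=m\) is the wrong mode. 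The resonant scale is instead \(h4^k\asymp1\), that is, \(4^k\approx4^m/T\). I would therefore pick \(k=m+k_0\) with \(k_0\) fixed, depending on \(T\), so that \(Th4^k=T^2 4^{k_0}\in[1,4]\).

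At this mode, \(|\lambda_k|^M=(1+h^24^k)^{-M/2}\approx e^{-h4^k\cdot\text{const}}\). Precisely, \(M h^24^k/2=T^2 4^{k_0}/2\ge1/2\), so \(1-|\lambda_k|^M\ge1-e^{-1/4}\). This yields
\[
\|e_\Gamma\|_{B^0}\gtrsim a_{m+k_0}\asymp 2^{-m\alpha}(1+m)^{-2}\asymp h_m^{\alpha/2}(1+m)^{-2}.
\]
Recovery gives the same bound for \(e_u\), since \(\curl e_u=e_\Gamma\) and \(\|e_\Gamma\|_{B^0}\lesssim\|e_u\|_{B^1}\).

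The main obstacle is this mode selection, namely locating the resonant block and checking \(1-|\lambda_k|^M\) with uniform constants; the rest is bookkeeping. A fallback exists if the block isolation is awkward: use the \(L^\infty\) evaluation trick from Proposition~\ref{prop:endpt-lower-bound}, which lower-bounds every mode's contribution by \(1-|\lambda_k|^M\ge0\), then keep only \(k=m+k_0\).
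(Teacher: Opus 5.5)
Your proof is correct in its final form and is essentially the paper's. The upper bound is \eqref{eq:shear-dyadic-error} summed over blocks with $\min\{1,x\}\le x^{\alpha/2}$, followed by velocity recovery. The lower bound comes from the modulus gap $1-|\lambda_k|^{M}$ of a single lacunary mode.

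Your dismissal of the mode $k=m$, however, rests on an arithmetic slip. Since $h_m2^m=T2^{-m}$, one has $(h_m2^m)^2=T^24^{-m}$, not $T^24^{-2m}$. Hence $|\lambda_m|^{M_m}=(1+T^24^{-m})^{-4^m/2}\to e^{-T^2/2}<1$, and the modulus already gives a gap at $k=m$. This is also what your own identity $M_mh_m^24^{k}=T^24^{k_0}$ gives at $k_0=0$, and $h_m4^m=T$ satisfies your resonance criterion. The paper uses this mode directly and obtains $|\widehat{e_\Gamma}(2^m,0)|\ge\frac{a_m}{2}\bigl(1-(1+T^24^{-m})^{-4^m/2}\bigr)\ge c_Ta_m$. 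Your shift to $k=m+k_0$ is nonetheless valid. It makes the gap independent of $T$ (at least $1-e^{-1/4}$), at the cost of a factor $2^{-k_0\alpha}\asymp T^{\alpha}$ that is absorbed into $c_{T,\alpha}$. The phase analysis is therefore unnecessary, though harmless.

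You also do not need a dyadic block that isolates a single frequency. With the standard partition, $\Delta_j$ generally sees both $2^j$ and $2^{j+1}$ anyway. A single Fourier coefficient is bounded by the $L^\infty$ norm, hence by the $B^0$ norm, and this is how the paper concludes. Your fallback of evaluating at $x_1=T$ works equally well: there each mode contributes $a_k\bigl(1-\operatorname{Re}(\lambda_k^{M}e^{i2^kT})\bigr)\ge a_k(1-|\lambda_k|^{M})\ge0$, so you may keep just one term.
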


\begin{proof}
Translations are continuous on \(B^{1+\alpha}\), as follows by
approximation with finite dyadic sums.  Together with the constant
stress, this gives the asserted exact-solution class in both ranges
of \(\alpha\).
The cancellations and unique transport solves in the preceding proof
apply to any such \(W\), for every \(T>0\).  In particular,
\(\tau^n=\tau_0\), \(\Gamma_0=W'\), and
\(\Gamma^{n+1}=(I+h\partial_1)^{-1}\Gamma^n\).
Use \eqref{eq:shear-dyadic-error} and
\(\min\{1,x\}\le x^{\alpha/2}\), valid for \(0<\alpha\le2\).
Summation gives
\(\max_{nh\le T}\|\Gamma(t_n)-\Gamma^n\|_{B^0}
\le C_{T,\alpha}h^{\alpha/2}\|\Gamma_0\|_{B^\alpha}\).
Velocity recovery and
\(\|\Gamma_0\|_{B^\alpha}\lesssim\|W\|_{B^{1+\alpha}}\)
prove \eqref{eq:positive-regularity-shear-upper}.

For the lower bound, take \(W=W_\alpha\).
The weighted dyadic coefficients of \(W_\alpha\) in \(B^{1+\alpha}\)
and of \(W_\alpha'\) in \(B^\alpha\) are summable, being bounded by
\(C(1+m)^{-2}\).
The mode of frequency \(k_m=(2^m,0)\) in
\(\Gamma^n\) is multiplied at each step by \((1+ih2^m)^{-1}\).
For \(h=h_m\) and \(M=M_m\), the modulus of the accumulated numerical
multiplier is
\[
 |(1+ih_m2^m)^{-M_m}|
 =(1+T^24^{-m})^{-4^m/2}\longrightarrow e^{-T^2/2}<1.
\]
The exact multiplier \(e^{-iT2^m}\) has modulus one, so the reverse
triangle inequality yields
\[
 \left|\widehat{\Gamma(T)-\Gamma^{M_m}}(k_m)\right|
 \ge \frac{2^{-m\alpha}}{2(1+m)^2}
       \left[1-(1+T^24^{-m})^{-4^m/2}\right]
 \ge c_T\frac{2^{-m\alpha}}{(1+m)^2}.
\]
A Fourier coefficient is bounded by the \(L^\infty\) norm, which is
controlled by the \(B^0\) norm.  Since the velocity means agree and
\(\curl(u(T)-u^{M_m})=\Gamma(T)-\Gamma^{M_m}\), bounded
differentiation gives the same lower bound in \(B^1\).
Now \(2^{-m\alpha}=T^{-\alpha/2}h_m^{\alpha/2}\) and
\(1+m\asymp1+|\log h_m|\), proving
\eqref{eq:positive-regularity-lower-bound}.  Dividing by \(h_m^r\)
gives a quantity tending to infinity whenever \(r>\alpha/2\).
\end{proof}

Together with Theorems~\ref{thm:base-algebraic}
and~\ref{thm:endpoint-rate-family}, the preceding proposition proves
optimality of the algebraic exponent \(\alpha/2\), \(0<\alpha\le2\),
over the stated exact-solution classes allowing \(a=b=0\) and
nonzero velocity means. The fixed-datum lower bound leaves open
logarithmic improvements and does not prescribe the rate of every
individual solution.

For transport by the constant velocity \(m_0\), the resolvent has
multiplier modulus
\((1+h^2(m_0\cdot k)^2)^{-1/2}\). Over \(T/h\) steps it produces
nonvanishing attenuation at \(|m_0\cdot k|\asymp h^{-1/2}\),
which is the scale used in the algebraic lower bound.
Applying the scheme in the Galilean frame moving with \(m_0=(1,0)\)
is exact on the stationary shear. Thus these lower bounds concern
the Eulerian scheme; they do not cover prescribed \(a>0\), zero-mean
velocity classes, fixed finite Fourier resolution, or other algorithms.
Since \(J^n=0\) on these constant-stress examples, the sequential
source correction does not alter the bounds.

\section{Numerical Experiments}
\label{sec:numerics}

We first test temporal convergence for smooth nonlinear data, then
isolate the effect of the new stress on the critical constitutive source.
The final calculation retains infinitely many shear modes and illustrates
the lower bound of Proposition~\ref{prop:endpt-lower-bound}.

\subsection{Temporal convergence for smooth nonlinear data}
\label{sec:sequential-new-test}

On the periodic grid \(\mathcal G_N=\{2\pi\ell/N:
\ell\in\{0,\ldots,N-1\}^2\}\), we use Fourier differentiation,
the projector
\(\widehat{\Pi_N^\#f}(k)=\mathbf1_{\{|k|_\infty<N/3\}}\widehat f(k)\),
and GMRES for the two transport solves.

\begin{remark}[Stability of the projected transport solves]
\label{rem:projected-transport-solves}
Let \(X_N=\operatorname{Ran}\Pi_N^\#\), let
\(v_N\in X_N^2\) be real and divergence free, and set
\(A_Nf=\Pi_N^\#(v_N\cdot\nabla f)\) on \(X_N\).
The strict \(2/3\) cutoff makes the retained coefficients of each
pseudospectral quadratic product of fields in \(X_N\) equal to its Fourier--Galerkin
projection. Orthogonality of \(\Pi_N^\#\) and periodic integration by
parts give, for \(f,z\in X_N\),
\[
 \langle A_Nf,z\rangle_{L^2}
 =-\langle f,A_Nz\rangle_{L^2}.
\]
Consequently, if \((I+hA_N)f=g\), then
\[
 \|f\|_{L^2}^2+h^2\|A_Nf\|_{L^2}^2=\|g\|_{L^2}^2.
\]
Thus \(I+hA_N\) is invertible on the finite-dimensional space
\(X_N\) for every \(h>0\), and its inverse is an \(L^2\) contraction,
uniformly in \(N\). These statements apply componentwise to stress.
For this fixed advector, an approximate solve \(\widetilde f\in X_N\)
with residual \(r=g-(I+hA_N)\widetilde f\) therefore satisfies
\(\|\widetilde f-f\|_{L^2}\le\|r\|_{L^2}\).
This bounds the error of each frozen linear solve. It does not supply
the uniform critical Besov bounds required for the projected analogue
of Section~\ref{ep:sec-smoothing}, and no fully discrete convergence
theorem is inferred from it.
\end{remark}

The initial data are projected from
\[
 \psi_4=\tfrac12\cos^4x\cos^4y,\qquad
 u_0=-\nabla^\perp\psi_4,\qquad
 \tau_0=\tfrac12I+
 \begin{pmatrix}u_{0,1}&u_{0,2}\\u_{0,2}&-u_{0,1}\end{pmatrix}.
\]
We take \(N=32\), \(a=0.1\), \(b=1/2\), \(T=0.02\),
and \(h=T/M\) for \(M=4,8,16,32,64\), with relative GMRES
tolerance \(10^{-13}\). The reference is a sequential trajectory on
the same grid with \(M=2048\).

For the error norms, set
\[
 \chi_{\rm LP}(\xi)=\eta\!\left(\frac{4/3-|\xi|}{4/3-3/4}\right),
 \qquad \eta(t)=\frac{b_+(t)}{b_+(t)+b_+(1-t)},\qquad
 \varphi(\xi)=\chi_{\rm LP}(\xi/2)-\chi_{\rm LP}(\xi),
\]
where \(b_+(t)=e^{-1/t}\) for \(t>0\) and zero otherwise.
We Fourier-pad each error and sample its smooth dyadic blocks on
\((4N)^2\) points, using Euclidean or Frobenius component norms.
The resulting sampled Besov norms are denoted by \(\|\cdot\|_{B^s,\mathrm{samp}}\).
For a trigonometric polynomial supported in \(|k|_\infty<N/3\),
Bernstein's inequality and a nearest grid point give
\[
 (1-\pi/6)\|f\|_{B^s}\le\|f\|_{B^s,\mathrm{samp}}\le\|f\|_{B^s}.
\]

Table~\ref{tab:sequential-new-test} reports the errors at \(T\).
For \(\mathcal E_h=\|e_u\|_{B^1,\mathrm{samp}}+\|e_\tau\|_{B^0,\mathrm{samp}}\),
fitting \(\log\mathcal E_h\) against \(\log h\) over
\(M=8,16,32,64\) gives an observed order of \(1.009\).
The finest value is \(\mathcal E_h=2.472\times10^{-4}\).
\begin{table}[htbp]
\centering\small
\begin{tabular}{rrrr}
\toprule
\(M\) & \(\|e_u\|_{B^1,\mathrm{samp}}\) & \(\|e_\tau\|_{B^0,\mathrm{samp}}\)
& \(\|e_\Gamma\|_{B^0,\mathrm{samp}}\)\\
\midrule
4 & $1.0572\times10^{-3}$ & $2.9493\times10^{-3}$ & $1.4850\times10^{-3}$ \\
8 & $5.3398\times10^{-4}$ & $1.4835\times10^{-3}$ & $7.4208\times10^{-4}$ \\
16 & $2.6756\times10^{-4}$ & $7.4179\times10^{-4}$ & $3.6984\times10^{-4}$ \\
32 & $1.3313\times10^{-4}$ & $3.6870\times10^{-4}$ & $1.8353\times10^{-4}$ \\
64 & $6.5608\times10^{-5}$ & $1.8161\times10^{-4}$ & $9.0324\times10^{-5}$ \\
\bottomrule
\end{tabular}
\caption{Sequential scheme at fixed \(N=32\), with \(h=0.02/M\).
All errors use the sequential reference with \(M=2048\).}
\label{tab:sequential-new-test}
\end{table}

Halving the reference step from \(T/1024\) to \(T/2048\) changes the
finest \(\mathcal E_h\) by \(3.23\%\). Increasing the norm-sampling grid from
\(2N\) to \(4N\) changes \(\mathcal E_h\) by at most \(0.65\%\).
The largest relative GMRES residual is \(9.98\times10^{-14}\).
Residuals of the projected linear systems were evaluated at every step
after filtering and symmetrization. The sum of their normalized
\(H^2\) norms over both solves and all steps is below
\(2.33\times10^{-9}\) for every \(N=32\) trajectory, including the
references; the norm uses the Fourier weights \((1+|k|^2)^2\).
These checks measure sensitivity at fixed spatial resolution and do
not certify error against the continuous PDE solution.

To isolate the source change, we also replace
\(\mathcal RQ(\nabla u^n,\tau^{n+1})\) in \eqref{eq:Gn} by
\(\mathcal RQ(\nabla u^n,\tau^n)\), leaving all other settings unchanged.
This comparison scheme, which evaluates the source at the old stress,
has fitted order \(1.008\) and finest error \(2.777\times10^{-4}\). At \(M=64\), its difference from the
sequential solution in the sampled \(B^1\times B^0\) norm is
\(7.122\times10^{-5}\). A frozen-source reference with \(M=2048\)
changes the finest \(\mathcal E_h\) values by at most \(0.44\%\).

We also repeat the sequential runs on solver grids \(N=64,128\),
with the same settings and a separate \(M=2048\) reference on each grid.
Embedding normalized Fourier coefficients in a common array and sampling
all comparison norms on \(512^2\) points gives a fitted order of
\(1.009\) on all three grids, using \(M=8,16,32,64\).
The reference-field differences in the sampled \(B^1\times B^0\) sum
are \(9.725\times10^{-6}\) between \(N=32\) and \(64\), and
\(2.998\times10^{-12}\) between \(N=64\) and \(128\).

\subsection{The constitutive source at critical regularity}
\label{sec:critical-source-test}

We evaluate the family in Proposition~\ref{prop:critical-source-obstruction}
with \(a=0\), \(b=1/2\), \(h=2^{-12}\), and dyadic \(K\ge16\).
The data and exact first-step formulas are those used in its proof;
in particular, \eqref{eq:source-obstruction-identities} gives
\[
 \begin{aligned}
 \mathcal RQ_{\rm old}&:=\mathcal RQ(\nabla u_K^0,\tau_K^0)
                      =-\cos(Kx_1)g_K,\\
 \mathcal RQ_{\rm new}&:=\mathcal RQ(\nabla u_K^0,\tau_{K,h}^1)
                      =-\cos(Kx_1)E_hg_K.
 \end{aligned}
\]
These formulas also hold at \(K=16,32\); the restriction \(K\ge64\)
in the proposition is used for its stated lower bound.
The proposition proves logarithmic growth of the old-source norm and
the bound \(C(1+\log K)e^{-9hK^2/16}\) for the new-source norm.
The table below samples this family at the fixed step size.

\begin{table}[htbp]
\centering\small
\begin{tabular}{rrrr}
\toprule
\(K\) & \(\|\tau_K^0\|_{B^0,\mathrm{samp}}\)
& \(\|\mathcal RQ_{\rm old}\|_{B^0,\mathrm{samp}}\)
& \(\|\mathcal RQ_{\rm new}\|_{B^0,\mathrm{samp}}\)\\
\midrule
32  & 1.320 & 1.249 & 0.9720 \\
64  & 1.363 & 1.460 & 0.5385 \\
128 & 1.388 & 1.670 & 0.03306 \\
256 & 1.401 & 1.880 & $1.670\times10^{-6}$ \\
\bottomrule
\end{tabular}
\caption{Constitutive sources after one stress update, at fixed
\(h=2^{-12}\). The input velocity satisfies
\(\|\nabla u_K^0\|_{B^0}=1\) for every \(K\).}
\label{tab:critical-source}
\end{table}

Table~\ref{tab:critical-source} is evaluated directly from the Fourier
coefficient formulas, using the same smooth dyadic cutoff as above.
Each block is sampled on \(64N\) points, where \(N=8K\); halving this
sampling grid changes the tabulated norms by less than \(0.001\%\).
At \(K=16\), the exact formulas also agree with one step of the
two-dimensional implementation to absolute error below \(2\times10^{-14}\).
The old-source norms increase with \(K\), while the new-source norms
decrease once \(K\) reaches the heat scale \(h^{-1/2}=64\).
The data vary with \(K\); this table tests the constitutive source on
bounded critical data and does not assign a temporal convergence order.

\subsection{A fixed critical shear with logarithmic convergence}
\label{sec:critical-shear-numerics}

We use the fixed datum in
Proposition~\ref{prop:endpt-lower-bound} with $\gamma=T=1$, $a=b=0$, and
$\tau_0=\operatorname{diag}(0,1)$. Thus
\[
 u_0=(1,W(x_1)),\qquad
 W(x_1)=\sum_{m\ge1}(1+m)^{-2}2^{-m}\sin(2^mx_1).
\]
This fixed datum belongs to $B^1$ but to no $B^{1+\alpha}$ with
$\alpha>0$. The sequential source correction vanishes identically on this
family, so the sequential scheme follows the same implicit-transport recurrence.
We evaluate the spatially untruncated recurrence in closed form at
$h_j=2^{-2j}$ and $M_j=2^{2j}$, including the infinite frequency tail.
Set $a_m=(1+m)^{-2}$ and
\[
 q_m(h)=(1+ih2^m)^{-1/h}e^{i2^m},\qquad
 L(h)=\sum_{m\ge1}a_m(1-\operatorname{Re}q_m(h)),\qquad
 A(h)=\sum_{m\ge1}a_m|1-q_m(h)|.
\]
Here $L(h)$ is the good-vorticity error at $x_1=T=1$, and
\[
 L(h)\le\|\Gamma(1)-\Gamma^M\|_{L^\infty}\le A(h),\qquad
 \|\Gamma(1)-\Gamma^M\|_{B^0}+\|u(1)-u^M\|_{B^1}\lesssim A(h).
\]
The majorant $A$ is not an evaluated Besov norm. For $m>j+32$, the
coefficient tail is summed as $\zeta(2,j+34)$, with $q_m$ replaced by zero.
Monotonicity of $|q_m|$ bounds the omitted contribution by
$|q_{j+33}|\zeta(2,j+34)$. Including individually omitted tiny multipliers,
the analytical omission bound is below $1.5\times10^{-109}$ for the
reported evaluations; this bound does not include floating-point rounding.
Calculations at 80 and 120 decimal digits agree to absolute error below
$10^{-80}$. We compare $h_j$ with $h_{2j}=h_j^2$ using the effective order
\[
 p_j=\frac{\log L(h_j)-\log L(h_{2j})}{2j\log2}.
\]

\begin{figure}[!htb]
\centering
\includegraphics[width=0.82\textwidth]{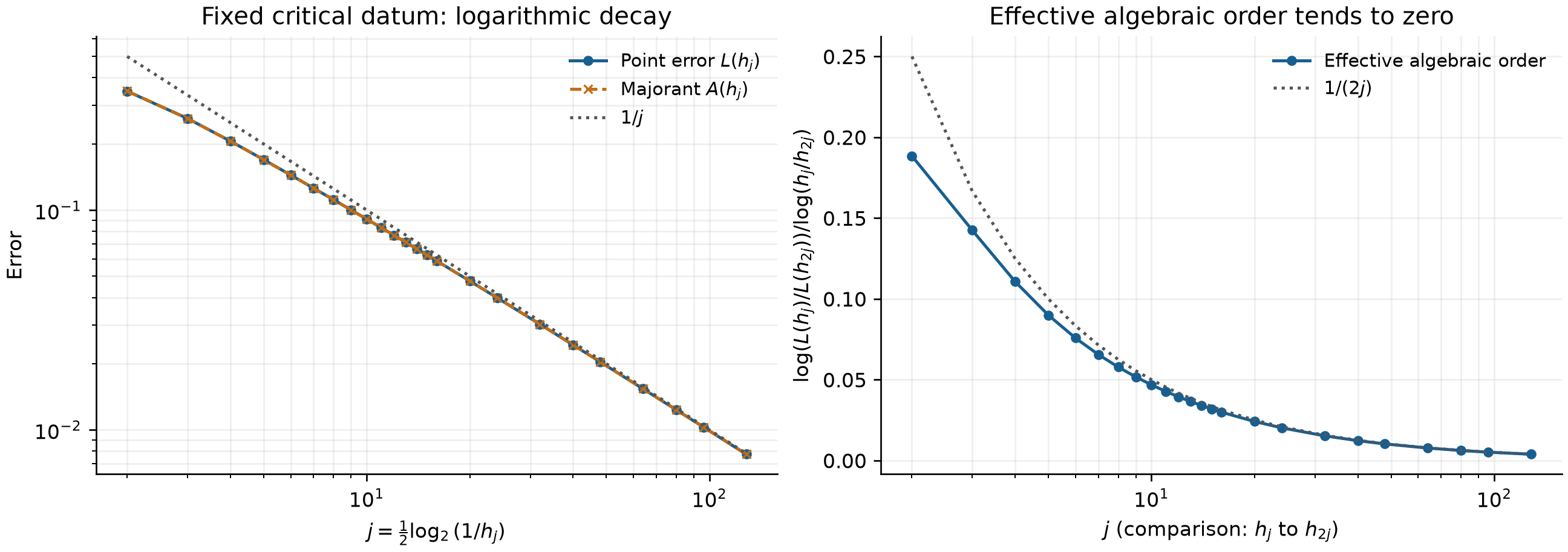}
\caption{Fixed critical shear: the point error \(L(h_j)\) and coefficient
majorant \(A(h_j)\) follow the logarithmic scale \(1/j\) (left), while
the effective algebraic order \(p_j\) decreases toward zero (right).}
\label{fig:critical-shear-log}
\end{figure}

For \(j=4,8,16,32,64,128\), \(jL(h_j)\) increases from
\(0.824\) to \(0.992\), while \(p_j\) decreases from \(0.111\) to
\(0.00388\), consistently with Remark~\ref{rem:critical-shear-matching}.
Figure~\ref{fig:critical-shear-log} shows the logarithmic decay and
vanishing effective order.
The logarithmic rate is specific to this family. A fixed eight-mode truncation gives
an effective order of \(0.999999998\) between \(j=20\) and \(j=24\):
a fixed spatial cutoff eventually hides the critical behavior.

\section{Conclusion}
\label{sec:conclusion}

The sequential good-vorticity scheme converges in the critical Besov
norms on every prescribed interval satisfying the exact-solution
assumptions. When the velocity is controlled only in \(B^1\), the gain
of one stress derivative at the time nodes is optimal, even for smooth
initial stress. Full two-derivative bounds hold for \(E_h\tau^n\)
after summation in time with weight \(h\).
Together, Theorems~\ref{thm:base-algebraic}
and~\ref{thm:endpoint-rate-family} give the rate \(h^{\alpha/2}\)
for \(0<\alpha\le2\), reaching first order at \(\alpha=2\).
Fixed advected shears establish optimality of these algebraic exponents
over the class allowing \(a=b=0\) and nonzero velocity means, and show
that no positive algebraic rate can hold for all critical data.
The constitutive-source example separately proves the failure of a
uniform old-stress source bound on bounded critical data.

For the smooth nonlinear test, the observed temporal order is close to
one on all three spatial grids. The source test illustrates how smoothing
the high-frequency stress before multiplication controls the constitutive
source. The infinite-mode shear exhibits the logarithmic decay established
for this fixed family.

For a fully discrete extension, one remaining question is whether the
projected transport resolvent admits suitable critical Besov bounds
uniform in the spatial cutoff. The \(L^2\) contraction in
Remark~\ref{rem:projected-transport-solves} controls each frozen linear
solve; the critical convergence proof uses the unprojected equation.

\FloatBarrier

\appendix

\section{Sharpness of discrete smoothing}
\label{sec:smoothing-sharpness}

This appendix establishes two distinct sharpness statements. For the
abstract heat recurrence with arbitrary forcing, the optimal frequency
weight grows like \(2^{2j}\) until it reaches \(h^{-1}\), as expressed
by the saturated norm below. A separate example for the nonlinear scheme
shows that, with the velocity controlled only in \(B^1\), the stress
at a new time node need not belong to \(B^{1+\varepsilon}\) for any
\(\varepsilon>0\), even when the initial stress is smooth.

In the recurrence of Lemma~\ref{ep:heat-maximal}, the forcing
\(h f_n\) is added after the heat operator acts. It therefore enters
the new node without being smoothed. Define
\begin{equation}
 \|f\|_{B_h^{r+2}}
   =\|\Delta_{-1}f\|_\infty
     +\sum_{j\ge0}2^{rj}\min(2^{2j},h^{-1})
                                      \|\Delta_jf\|_\infty .
 \label{ep:saturated-norm}
\end{equation}
Under the hypotheses of that lemma, the nodal values satisfy
\begin{equation}
 h\sum_{n=0}^{J}\|z^n\|_{B_h^{r+2}}
 \le C_{T,a,r}\left(\|z^0\|_{B^r}
                 +h\sum_{n=0}^{J-1}\|f_n\|_{B^r}\right).
 \label{eq:saturated-heat-bound}
\end{equation}
Indeed, apply the discrete Duhamel formula blockwise. The identity
\(h\min(2^{2j},h^{-1})=\min(h2^{2j},1)\) and the first geometric
sum in \eqref{ep:heat-geometric-sums} bound the initial and forcing
contributions, including the unsmoothed term. Heat contraction
controls the lowest block, including constants.

For the stress iterates under \eqref{ep:prefix}, applying
\eqref{eq:saturated-heat-bound} with \(r=0\) and \(f_n=F_n\)
from \eqref{ep:heat-forcing-definition} gives
\[
 h\sum_{n=0}^{J}\|\tau^n\|_{B_h^2}\le C_{T,R,\delta}.
\]
Before the transport solve, the stress satisfies a full two-derivative
bound after summation in time:
\begin{equation}
 h\sum_{n=0}^{J-1}\|E_h(\tau^n+hS^n)\|_{B^2}
       \le C_{T,R,\delta}.
 \label{ep:pretransport-smoothing}
\end{equation}
To see this, use \eqref{ep:stress-variation-bound} for \(E_h\tau^n\).
For \(n\ge1\), the mixed product law and
\eqref{ep:stress-integrated} give
\(h\sum_{n=1}^{J-1}\|S^n\|_{B^0}\le C_{T,R,\delta}\).
Heat smoothing then bounds
\(h^2\sum_{n=1}^{J-1}\|E_hS^n\|_{B^2}\) uniformly.
For \(n=0\), use
\(\|E_hS^0\|_{B^2}\le Ch^{-1}\|S^0\|_\infty\).
This proves \eqref{ep:pretransport-smoothing}.

\paragraph{Optimal weight for the abstract heat recurrence.}
The saturated weight in \eqref{eq:saturated-heat-bound} is optimal
for an arbitrary \(B^r\) forcing. At a resolved frequency,
\(h2^{2j}\le1\le T2^{2j}\), the homogeneous heat evolution has an
\(h\)-weighted discrete time sum comparable to \(2^{-2j}\) times its
initial amplitude; hence the weight cannot exceed \(C2^{2j}\).
At an unresolved frequency, where \(h2^{2j}>1\), take \(z^0=0\) and
apply forcing to a single Fourier mode only at \(n=0\). Since \(z^1=hf_0\), any admissible weight
\(\mu_{j,h}\) must satisfy
\[
 h^2\mu_{j,h}\|f_0\|_{B^r}
 \le Ch\|f_0\|_{B^r},
\]
so \(\mu_{j,h}\le Ch^{-1}\).

\paragraph{Sharp nodal regularity for the nonlinear scheme.}
For the nonlinear split scheme, the final transport solve prevents
full nodal \(B^2\) smoothing: the stress may fail to belong to
\(B^{1+\varepsilon}\) for every \(\varepsilon>0\), even when its
initial value is smooth. Take \(a=b=0\) and
\[
 u^0=(0,W(x_1)),\qquad \tau^0=\tfrac12\cos x_2\,I,
 \qquad
 W(x_1)=\sum_{j\ge1}j^{-2}2^{-j}\cos(2^jx_1).
\]
Then \(W\in B^1\), while \(W\notin B^{1+\varepsilon}\) for every
\(\varepsilon>0\). Since \(b=0\) and the initial stress is isotropic,
\(Q\) vanishes in the first stress source. Also, \(D(u^0)\) has zero
trace.
Consequently \(q^1=\operatorname{tr}\tau^1\) satisfies
\[
 q^1+hW(x_1)\partial_2q^1=e^{-h}\cos x_2,\qquad
 q^1=e^{-h}\left(
 \frac{\cos x_2}{1+h^2W(x_1)^2}
 +\frac{hW(x_1)\sin x_2}{1+h^2W(x_1)^2}\right).
\]
If \(q^1\in B^{1+\varepsilon}(\mathbb T^2)\), projection onto its
cosine and sine modes in \(x_2\) gives
\[
 \frac1{1+h^2W^2},\quad
 \frac{hW}{1+h^2W^2}\in B^{1+\varepsilon}(\mathbb T).
\]
The first coefficient is bounded away from zero. Taking the quotient,
the composition and product laws give
\(W\in B^{1+\varepsilon}\), a contradiction. Thus the one-derivative
nodal gain is sharp. Uniform bounds after summation in time still give
two derivatives for \(E_h\tau^n\), by
Corollary~\ref{ep:stress-variation}, and for the stress before the
transport solve, by \eqref{ep:pretransport-smoothing}.

\section*{Statements and Declarations}

\noindent\textbf{Funding.}
X. Cheng is supported in part by the NSFC (Grant Nos. 12401270 and 42450192), the Natural Science Foundation of Shanghai (Grant No. 24ZR1404200), and the Shanghai Magnolia Talent Plan Pujiang Project (Grant No. 24PJA007). C. Yuan is supported by the NSFC (Grant No. 123B2008).

\smallskip\noindent\textbf{Competing interests.}
The authors declare no competing interests.

\smallskip\noindent\textbf{Data availability.}
All numerical experiments use the initial data, parameters, and
evaluation procedures specified in Section~\ref{sec:numerics}.
No external datasets were used.

\smallskip\noindent\textbf{Use of artificial intelligence.}
OpenAI Codex assisted with drafting, restructuring and review of the
mathematical arguments, and with numerical-code development and checking.
The authors are responsible for the content of the manuscript.

\end{document}